\begin{document}

\newcommand{\nc}{\newcommand}
\newcommand{\delete}[1]{}

%\delete{
\nc{\mlabel}[1]{\label{#1}}  % Use this to suppress names
\nc{\mcite}[1]{\cite{#1}}  % Use this to suppress names
\nc{\mref}[1]{\ref{#1}}  % Use this to suppress names
\nc{\mbibitem}[1]{\bibitem{#1}} % Use this to show number name
%}

\delete{
\nc{\mlabel}[1]{\label{#1}  % Use the next two lines to show names
{\hfill \hspace{1cm}{\small\tt{{\ }\hfill(#1)}}}}
\nc{\mcite}[1]{\cite{#1}{\small{\tt{{\ }(#1)}}}}  % Use this lines to show names
\nc{\mref}[1]{\ref{#1}{{\tt{{\ }(#1)}}}}  % Use this lines to show names
\nc{\mbibitem}[1]{\bibitem[\bf #1]{#1}} % Use this to show name
}

\nc{\redtext}[1]{\textcolor{red}{#1}}
\nc{\bluetext}[1]{\textcolor{blue}{#1}}
\nc{\greentext}[1]{\textcolor{green}{#1}}

\nc{\li}[1]{\textcolor{red}{Li: #1}}
\nc{\mg}[1]{\textcolor{blue}{M/G: #1}}

%%%%%%%%%%%%%%%%%%%%%%%% Statements
\newtheorem{theorem}{Theorem}[section]
\newtheorem{prop}[theorem]{Proposition}
\newtheorem{lemma}[theorem]{Lemma}
\newtheorem{coro}[theorem]{Corollary}
\newtheorem{prop-def}{Proposition-Definition}[section]
\newtheorem{claim}{Claim}[section]
\newtheorem{propprop}{Proposed Proposition}[section]
\newtheorem{conjecture}{Conjecture}
\newtheorem{assumption}{Assumption}
\newtheorem{condition}[theorem]{Assumption}
\newtheorem{question}[theorem]{Question}
\theoremstyle{definition}
\newtheorem{defn}[theorem]{Definition}
\newtheorem{exam}[theorem]{Example}
\newtheorem{remark}[theorem]{Remark}

\renewcommand{\emph}[1]{\textbf{#1}}
\renewcommand{\labelenumi}{{\rm(\alph{enumi})}}
\renewcommand{\theenumi}{\alph{enumi}}

\nc{\tred}[1]{\textcolor{red}{#1}}
\nc{\tblue}[1]{\textcolor{blue}{#1}}
\nc{\tgreen}[1]{\textcolor{green}{#1}}
\nc{\tpurple}[1]{\textcolor{purple}{#1}}
\nc{\btred}[1]{\textcolor{red}{\bf #1}}
\nc{\btblue}[1]{\textcolor{blue}{\bf #1}}
\nc{\btgreen}[1]{\textcolor{green}{\bf #1}}
\nc{\btpurple}[1]{\textcolor{purple}{\bf #1}}

%%%%%%%%%%%%%%%%%%%%%%% symbols

%%%%%%%%%%%%%%%%%%%%%%% Georg and Markus, Notation

% NB: Capitalization is inconsistent since some commands are
% predefined in LaTeX.

\newcommand{\p}{P}        % Rota-Baxter operator
\newcommand{\q}{Q}        % Quasi-antiderivative
\newcommand{\D}{d}        % Derivation
\newcommand{\pp}{\Pi}            % Integral in IDA
\newcommand{\DD}{D}             % Derivation in IDA
\newcommand{\DDD}{\delta}    % "Constructed Derivation"
\newcommand{\E}{E}                   % Evaluation
\newcommand{\J}{J}                    % Initialization
\newcommand{\s}{S}                   % Solvable part
\newcommand{\T}{T}                   % Transcendental part
\newcommand{\ct}{T}   % Transcendental part (index)
\newcommand{\ci}{J}    % Initialized part (index)

\newcommand{\Cxto}{ \CC(x)_{T} } % Transcendental for rational functions with weight $\lambda$ at 0
\newcommand{\Cxta}{ \CC(x)_{T,a}} % Transcendental for rational functions with weight $\lambda$ at 0

\nc{\im}{\operatorname{im}}
\nc{\re}{\operatorname{Re}}

\nc{\const}{C}

\newcommand{\cum}{{\textstyle \varint}}
\providecommand{\abs}[1]{\lvert#1\rvert}

\def\reg{\textsuperscript{\textregistered}}
\def\tm{\leavevmode\hbox{$\rm {}^{TM}$}}
\newcommand{\mma}{Mathematica$\reg$}
\newcommand{\mpl}{Maple\tm}
\newcommand{\tma}{Theorema}

%%%%%%%%%%%%%%%%%%%%%%%%%%%%% New macros for this paper

\nc{\idp}{I}    %integro-differential operator
\nc{\ID}{\mathbf{ID}}   %category of int-diff algebras
\nc{\CID}{\mathbf{CID}} %category of comm. int-diff algebras
\nc{\fid}[1]{\mathrm{ID}(#1)} %free comm. int-diff algebra
\nc{\efid}[1]{\mathrm{ID}(#1)^*} %explicit free comm int-diff algebra
\nc{\ee}{\vep}
\nc{\dop}[1]{{#1_d}}   %(...)_d operator
\nc{\pdop}[1]{(#1)_d}
\nc{\fop}[1]{{#1_f}}   %(...)_f operator
\nc{\pfop}[1]{(#1)_f}
\nc{\lbar}[1]{\overline{#1}}
\nc{\llbar}[1]{\overline{\overline{#1}}}

\nc{\adec}{\check{;}}
\nc{\dftimes}{\widetilde{\otimes}} \nc{\spr}{\cdot}
\nc{\disp}[1]{\displaystyle{#1}}
\nc{\bin}[2]{ (_{\stackrel{\scs{#1}}{\scs{#2}}})}  %binomial coeff
\nc{\binc}[2]{ \left (\!\! \begin{array}{c} \scs{#1}\\
    \scs{#2} \end{array}\!\! \right )}  %binomial coeff
\nc{\bincc}[2]{  \left ( {\scs{#1} \atop
    \vspace{-.5cm}\scs{#2}} \right )}  %binomial coeff
\nc{\sarray}[2]{\begin{array}{c}#1 \vspace{.1cm}\\ \hline
    \vspace{-.35cm} \\ #2 \end{array}}
\nc{\bs}{\bar{S}} \nc{\dcup}{\stackrel{\bullet}{\cup}}
\nc{\dbigcup}{\stackrel{\bullet}{\bigcup}} \nc{\etree}{\big |}
\nc{\la}{\longrightarrow} \nc{\fe}{\'{e}} \nc{\rar}{\rightarrow}
\nc{\dar}{\downarrow} \nc{\dap}[1]{\downarrow
\rlap{$\scriptstyle{#1}$}} \nc{\uap}[1]{\uparrow
\rlap{$\scriptstyle{#1}$}} \nc{\defeq}{\stackrel{\rm def}{=}}
\nc{\diffa}[1]{\{#1\}}
\nc{\diffs}[1]{\Delta{#1}}
\nc{\sh}{Sh}
\nc{\dis}[1]{\displaystyle{#1}} \nc{\dotcup}{\,
\displaystyle{\bigcup^\bullet}\ } \nc{\sdotcup}{\tiny{
\displaystyle{\bigcup^\bullet}\ }} \nc{\hcm}{\ \hat{,}\ }
\nc{\hcirc}{\hat{\circ}} \nc{\hts}{\hat{\shpr}}
\nc{\lts}{\stackrel{\leftarrow}{\shpr}}
\nc{\rts}{\stackrel{\rightarrow}{\shpr}} \nc{\lleft}{[}
\nc{\lright}{]} \nc{\uni}[1]{\tilde{#1}} \nc{\wor}[1]{\check{#1}}
\nc{\free}[1]{\bar{#1}} \nc{\den}[1]{\check{#1}} \nc{\lrpa}{\wr}
\nc{\curlyl}{\left \{ \begin{array}{c} {} \\ {} \end{array}
    \right .  \!\!\!\!\!\!\!}
\nc{\curlyr}{ \!\!\!\!\!\!\!
    \left . \begin{array}{c} {} \\ {} \end{array}
    \right \} }
\nc{\leaf}{\ell}       % number of leaves
\nc{\longmid}{\left | \begin{array}{c} {} \\ {} \end{array}
    \right . \!\!\!\!\!\!\!}
\nc{\ot}{\otimes} \nc{\sot}{{\scriptstyle{\ot}}}
\nc{\otm}{\overline{\ot}}
\nc{\ora}[1]{\stackrel{#1}{\rar}}
\nc{\ola}[1]{\stackrel{#1}{\la}}%${\Bbb Z}$
\nc{\scs}[1]{\scriptstyle{#1}} \nc{\mrm}[1]{{\rm #1}}
\nc{\margin}[1]{\marginpar{\rm #1}}   %{\rm #1}}
\nc{\dirlim}{\displaystyle{\lim_{\longrightarrow}}\,}
\nc{\invlim}{\displaystyle{\lim_{\longleftarrow}}\,}
\nc{\mvp}{\vspace{0.5cm}} \nc{\svp}{\vspace{2cm}}
\nc{\vp}{\vspace{8cm}} \nc{\proofbegin}{\noindent{\bf Proof: }}
%\nc{\proofbegin}{\begin{proof}} % AMS command
\nc{\proofend}{$\blacksquare$ \vspace{0.5cm}}
%\nc{\proofend}{\end{proof}} %AMS command
%\nc{\intg}[1]{\lceil{#1}\rceil}  %old free int ring
%\nc{\sha}{\scs{\mbox{\cyr X}}} %used to be \cyr
\nc{\sha}{{\mbox{\cyr X}}}  %used to be \cyr
\nc{\ssha}{\mathop{\mbox{\scyr X}}} %small sha
\nc{\mxssha}{\mathop{\mbox{\scyr X}_\lambda}} %small sha with lambda suffix
\nc{\ncsha}{{\mbox{\cyr X}^{\mathrm NC}}} \nc{\ncshao}{{\mbox{\cyr
X}^{\mathrm NC,\,0}}}
\nc{\shpr}{\diamond}    %Shuffle product
\nc{\shprm}{\overline{\diamond}}    %Shuffle product
\nc{\shpro}{\diamond^0}    %Shuffle product
\nc{\shprr}{\diamond^r}     %product on controlled trees
\nc{\shpra}{\overline{\diamond}^r}
\nc{\shpru}{\check{\diamond}} \nc{\catpr}{\diamond_l}
\nc{\rcatpr}{\diamond_r} \nc{\lapr}{\diamond_a}
\nc{\sqcupm}{\ot}
\nc{\lepr}{\diamond_e} \nc{\vep}{\varepsilon} \nc{\labs}{\mid\!}
\nc{\rabs}{\!\mid} \nc{\hsha}{\widehat{\sha}}
\nc{\lsha}{\stackrel{\leftarrow}{\sha}}
\nc{\rsha}{\stackrel{\rightarrow}{\sha}} \nc{\lc}{\lfloor}
\nc{\rc}{\rfloor} \nc{\sqmon}[1]{\langle #1\rangle}
\nc{\forest}{\calf} \nc{\ass}[1]{\alpha({#1})}
\nc{\altx}{\Lambda_X} \nc{\vecT}{\vec{T}} \nc{\onetree}{\bullet}
\nc{\Ao}{\check{A}}
\nc{\seta}{\underline{\Ao}}
\nc{\deltaa}{\overline{\delta}}
\nc{\trho}{\tilde{\rho}}
\nc{\tpow}[2]{{#2}^{\ot #1}}

%%%%%%%%%%%%%%%%%%%%% roman fonts, in alphabetic order
\nc{\mmbox}[1]{\mbox{\ #1\ }} \nc{\ann}{\mrm{ann}}
\nc{\Aut}{\mrm{Aut}}
\nc{\bread}{\mrm{b}}
\nc{\can}{\mrm{can}} \nc{\colim}{\mrm{colim}}
\nc{\Cinf}{C^\infty} \nc{\rchar}{\mrm{char}}
\nc{\cok}{\mrm{coker}} \nc{\dtf}{{R-{\rm tf}}} \nc{\dtor}{{R-{\rm
tor}}}
\renewcommand{\det}{\mrm{det}}
\nc{\depth}{{\mrm d}}
\nc{\Div}{{\mrm Div}} \nc{\End}{\mrm{End}} \nc{\Ext}{\mrm{Ext}}
\nc{\Fil}{\mrm{Fil}} \nc{\Frob}{\mrm{Frob}} \nc{\Gal}{\mrm{Gal}}
\nc{\GL}{\mrm{GL}} \nc{\Hom}{\mrm{Hom}} \nc{\hsr}{\mrm{H}}
\nc{\hpol}{\mrm{HP}} \nc{\id}{\mrm{id}} %\nc{\im}{\mrm{im}}
\nc{\incl}{\mrm{incl}} \nc{\length}{\mrm{length}}
\nc{\LR}{\mrm{LR}} \nc{\mchar}{\rm char} \nc{\NC}{\mrm{NC}}
\nc{\mpart}{\mrm{part}} \nc{\pl}{\mrm{PL}}
\nc{\ql}{{\QQ_\ell}} \nc{\qp}{{\QQ_p}}
\nc{\rank}{\mrm{rank}} \nc{\rba}{\rm{RBA }} \nc{\rbas}{\rm{RBAs }}
\nc{\rbpl}{\mrm{RBPL}}
\nc{\rbw}{\rm{RBW }} \nc{\rbws}{\rm{RBWs }} \nc{\rcot}{\mrm{cot}}
\nc{\rest}{\rm{controlled}\xspace}
\nc{\rdef}{\mrm{def}} \nc{\rdiv}{{\rm div}} \nc{\rtf}{{\rm tf}}
\nc{\rtor}{{\rm tor}} \nc{\res}{\mrm{res}} \nc{\SL}{\mrm{SL}}
\nc{\Spec}{\mrm{Spec}} \nc{\tor}{\mrm{tor}} \nc{\Tr}{\mrm{Tr}}
\nc{\mtr}{\mrm{sk}}

%%%%%%%%%%%%%%%%%% bold face
\nc{\ab}{\mathbf{Ab}} \nc{\Alg}{\mathbf{Alg}}
\nc{\Algo}{\mathbf{Alg}^0} \nc{\Bax}{\mathbf{Bax}}
\nc{\Baxo}{\mathbf{Bax}^0} \nc{\Dif}{\mathbf{Dif}}
\nc{\CDif}{\mathbf{CDif}}
\nc{\CRB}{\mathbf{CRB}}
\nc{\CDRB}{\mathbf{CDRB}}
\nc{\RB}{\mathbf{RB}}
\nc{\DRB}{\mathbf{DRB}}
\nc{\RBo}{\mathbf{RB}^0} \nc{\BRB}{\mathbf{RB}}
\nc{\Dend}{\mathbf{DD}}
\nc{\Set}{\mathbf{Set}}
\nc{\bfk}{{\bf k}} \nc{\bfone}{{\bf 1}}
\nc{\base}[1]{{a_{#1}}} \nc{\detail}{\marginpar{\bf More detail}
    \noindent{\bf Need more detail!}
    \svp}
\nc{\Diff}{\mathbf{Diff}} \nc{\gap}{\marginpar{\bf
Incomplete}\noindent{\bf Incomplete!!}
    \svp}
\nc{\FMod}{\mathbf{FMod}} \nc{\mset}{\mathbf{MSet}}
\nc{\rb}{\mathrm{RB}} \nc{\Int}{\mathbf{Int}}
\nc{\Mon}{\mathbf{Mon}}
\nc{\remarks}{\noindent{\bf Remarks: }} \nc{\Rep}{\mathbf{Rep}}
\nc{\Rings}{\mathbf{Rings}} \nc{\Sets}{\mathbf{Sets}}
\nc{\DT}{\mathbf{DT}}

%%%%%%%%%%%%%%%%%%%Bbb fonts
\nc{\BA}{{\mathbb A}} \nc{\CC}{{\mathbb C}}
\nc{\EE}{{\mathbb E}} \nc{\FF}{{\mathbb F}} \nc{\GG}{{\mathbb G}}
\nc{\HH}{{\mathbb H}} \nc{\LL}{{\mathbb L}} \nc{\NN}{{\mathbb N}}
\nc{\QQ}{{\mathbb Q}} \nc{\RR}{{\mathbb R}} \nc{\TT}{{\mathbb T}}
\nc{\VV}{{\mathbb V}} \nc{\ZZ}{{\mathbb Z}}

%%%%%%%%%%%%%%%%%%% cal fonts

\nc{\cala}{{\mathcal A}} \nc{\calc}{{\mathcal C}}
\nc{\cald}{{\mathcal D}} \nc{\cale}{{\mathcal E}}
\nc{\calf}{{\mathcal F}} \nc{\calfr}{{{\mathcal F}^{\,r}}}
\nc{\calfo}{{\mathcal F}^0} \nc{\calfro}{{\mathcal F}^{\,r,0}}
\nc{\oF}{\overline{F}}  \nc{\calg}{{\mathcal G}}
\nc{\calh}{{\mathcal H}} \nc{\cali}{{\mathcal I}}
\nc{\calj}{{\mathcal J}} \nc{\call}{{\mathcal L}}
\nc{\calm}{{\mathcal M}} \nc{\caln}{{\mathcal N}}
\nc{\calo}{{\mathcal O}} \nc{\calp}{{\mathcal P}}
\nc{\calr}{{\mathcal R}} \nc{\calt}{{\mathcal T}}
\nc{\caltr}{{\mathcal T}^{\,r}}
\nc{\calu}{{\mathcal U}} \nc{\calv}{{\mathcal V}}
\nc{\calw}{{\mathcal W}} \nc{\calx}{{\mathcal X}}
\nc{\CA}{\mathcal{A}}

%%%%%%%%%%%%%%%%%%  frak fonts
\nc{\fraka}{{\mathfrak a}} \nc{\frakB}{{\mathfrak B}}
\nc{\frakb}{{\mathfrak b}} \nc{\frakd}{{\mathfrak d}}
\nc{\oD}{\overline{D}}
\nc{\frakF}{{\mathfrak F}} \nc{\frakg}{{\mathfrak g}}
\nc{\frakm}{{\mathfrak m}} \nc{\frakM}{{\mathfrak M}}
\nc{\frakMo}{{\mathfrak M}^0} \nc{\frakp}{{\mathfrak p}}
\nc{\frakS}{{\mathfrak S}} \nc{\frakSo}{{\mathfrak S}^0}
\nc{\fraks}{{\mathfrak s}} \nc{\os}{\overline{\fraks}}
\nc{\frakT}{{\mathfrak T}}
\nc{\oT}{\overline{T}}
%\nc{\frakx}{{\mathfrak x}}
\nc{\frakX}{{\mathfrak X}} \nc{\frakXo}{{\mathfrak X}^0}
\nc{\frakx}{{\mathbf x}}
%\nc{\frakTxo}{{\frakTx}^0}
\nc{\frakTx}{\frakT}      %All rooted trees, correspond to \ncsha(X)
\nc{\frakTa}{\frakT^a}        % rooted trees for \ncsha(A)
\nc{\frakTxo}{\frakTx^0}   % rooted trees for \ncshao(X)
\nc{\caltao}{\calt^{a,0}}   % rooted trees for \ncshao(A)
\nc{\oV}{\overline{V}}
\nc{\ox}{\overline{\frakx}} \nc{\fraky}{{\mathfrak y}}
\nc{\frakz}{{\mathfrak z}} \nc{\oX}{\overline{X}}
\nc{\oZ}{\overline{Z}}

\font\cyr=wncyr10
\font\scyr=wncyr8

%%%%%%%%%%%%%%%%%%%%%%%%%%%%%%%%%%%%%%%%%%%%%%%%%%%%%%%%%%%%%%%%%%

\title{On Integro-Differential Algebras}
\author{Li Guo}
\address{Department of Mathematics and Computer Science,
         Rutgers University,
         Newark, NJ 07102}
\email{liguo@rutgers.edu}
\author{Georg Regensburger}
\address{
Johann Radon Institute for Computational
and Applied Mathematics (RICAM),
Austrian Academy of Sciences,
A-4040 Linz, Austria}
\email{georg.regensburger@oeaw.ac.at}
\author{Markus Rosenkranz}
\address{
 	School of Mathematics, Statistics and Actuarial Science,
	University of Kent,
	Canterbury CT2 7NF, England}
\email{M.Rosenkranz@kent.ac.uk}

\date{\today}

%\begin{document}

\begin{abstract}
  The concept of integro-differential algebra has been introduced
  recently in the study of boundary problems of differential
  equations.  We generalize this concept to that of
  integro-differential algebra with a weight, in analogy to the
  differential Rota-Baxter algebra. We construct free commutative
  integro-differential algebras with weight generated by a base
  differential algebra. This in particular gives an explicit
  construction of the integro-differential algebra on one generator. Properties of these free objects are
  studied.
\end{abstract}

\maketitle
\tableofcontents

\setcounter{section}{0}

\section{Introduction}
\mlabel{s:intro}

\subsection{Motivation and goal}

Differential algebra \cite{Kol,Ri} is the study of differentiation and nonlinear
differential equations by purely algebraic means, without using an underlying
topology. It has been largely successful in many important areas like:
uncoupling of nonlinear systems, classification of singular components, and
detection of hidden equations. There are various implementations that offer the
main algorithms needed for such tasks, for instance the
\texttt{DifferentialAlgebra} package in the \mpl\ system~\cite{BLOP}.

In view of applications, there is one crucial component that does not
fit well in differential algebra---the treatment of initial or
boundary conditions. The problem is that the elements of a
differential algebra or field are abstractions that cannot be
evaluated at a specific point. For bridging this gap (first in a
specific context of two-point boundary problems), a new framework was
set up in~\cite{R} with the following features:
\begin{itemize}
\item Differential algebras are enhanced by two evaluations
  (multiplicative functionals to the ground field) and two integral
  operators (Rota-Baxter operators), leading to the notion of analytic
  algebra.
\item The usual ring of differential operators is generalized to a
  ring of integro-differential operators.
\item Boundary problems are formulated in terms of the operator ring
  (differential equations as usual, boundary conditions in terms of the
  evaluations).
\item The Green's operator of a boundary problem is computed as an
  element of the operator ring.
\end{itemize}
The algebraic framework of boundary problems was subsequently refined and
extended by a multiplicative structure with results on the corresponding
factorizations along a given factorization of the differential
operator~\cite{RR,RRTB1}.  The factorization approach to boundary problems was
applied in~\cite{ACPRR, ACPaRR} to find closed-form and asymptotic expressions
for ruin probabilities and associated quantities in risk theory.

Moreover, it was realized that the algebraic theory of boundary problems is
intimately related to the theory of Rota-Baxter algebras, which can be regarded
as an algebraic study of both the integral and summation operators, even though
it originated from the probability study of G.~Baxter~\cite{Ba} in
1960. Rota-Baxter algebras have found extensive applications in mathematics and
physics, including quantum field theory and the classical Yang-Baxter
equation~\mcite{Bai,CK,EGK,Guwi,Gub,GZ}. In a nutshell, the relation with Rota-Baxter
algebras is this: In the differential algebra~$C^\infty(\RR)$, every point
evaluation~$\phi$ gives rise to a unique Rota-Baxter operator~$(1-\phi) \circ
\cum$, where~$\cum$ is any fixed integral operator, say~$f \mapsto \cum_0^x
f(\xi) \, d\xi$. See also Theorem~\ref{prop:char-intdiffalg} below for a
more general relation between evaluations and integral operators. We refer
to~\cite{Bav,Bav1} for an extensive study on algebraic properties of
integro-differential operators with polynomial coefficients and a single
evaluation (corresponding to initial value problems).

The algebraic approach to boundary problems is currently developed for
linear ordinary differential equations although some effort is under
way to cover certain classes of linear partial differential
equations~\cite{RRTB}. Various parts of the theory have been
implemented, first as external \mma-\tma\ reasoner~\cite{R}, then as
internal \tma\ code~\cite{RRTB,RRTB1}, and recently in a \mpl\ package with
new features for singular boundary problems~\cite{KRR}.

\subsection{Main results and outline of the paper}

Our main purpose in this paper is to construct free objects in the category of
$\lambda$-integro-differential algebras, which is the at the heart of the
algebraic framework of boundary problems described above. We use the
construction of free objects in a structure closely related to the
$\lambda$-integro-differential algebra, namely the differential Rota-Baxter
algebra. A Rota-Baxter algebra is an algebraic abstraction of a reformulation of
the integral by parts formula where only the integral operator appears. Free
commutative Rota-Baxter algebras were obtained in~\mcite{GK1,GK2} in terms of
shuffles and the more general mixable shuffles of tensor powers.

More recently the concept of a differential Rota-Baxter algebra was
introduced~\mcite{GK3} by putting a differential operator and a Rota-Baxter
operator of the same weight together such that one is the one side inverse of
the other as in the Fundamental Theorem of Calculus. One advantage of this
relatively independent combination of the two operators in a differential
Rota-Baxter algebra is that the free objects can be constructed quite easily by
building the free Rota-Baxter algebra on top of the free differential
algebra. Since the axiom of an integro-differential algebra requires more
intertwined relationship between the differential and Rota-Baxter operators, a
free integro-differential algebra is a quotient of a free differential
Rota-Baxter algebra. With this as the starting point of our construction of free
integro-differential algebras, our strategy is to find an explicitly defined
linear basis for this quotient from the known basis of the free differential
Rota-Baxter algebra by tensor powers.
For this purpose we use regular differential algebras as our basic building
block for the tensor powers.

In Section~\mref{sec:id}, we first introduce the concept of an integro-differential algebra of weight $\lambda$ and study their various characterizations, especially those in connection with differential Rota-Baxter algebras. In Section~\mref{sec:commfreeid}, we start with recalling free commutative Rota-Baxter algebras of weight $\lambda$ and then free commutative differential Rota-Baxter algebras of weight $\lambda$ and derive the existence of free commutative integro-differential algebras. The explicit
construction of free objects in the category of $\lambda$-integro-differential algebras is carried out in
Section~\mref{sec:fida} (Theorem~\mref{thm:intdiffa}) with a preparation on regular differential algebras and a detailed discussion on the regularity of the differential algebras of differential polynomials and rational functions.

\section{Integro-differential algebras of weight $\lambda$}
\mlabel{sec:id}

We first introduce the concepts and basic properties
related to $\lambda$-integro-differential algebras.

\subsection{Definitions and preliminary examples}

We recall the concepts of a derivation with weight, a Rota-Baxter operator with
weight and a differential Rota-Baxter algebra with weight, before introducing
our definition of an integro-differential algebra with weight.

\begin{defn}
  Let $\bfk$ be a unitary commutative ring. Let $\lambda\in \bfk$ be
  fixed.
  \begin{enumerate}
  \item A \emph{differential $\bfk$-algebra of weight $\lambda$} (also
    called a \emph{$\lambda$-differential $\bfk$-algebra}) is a unitary
    associative $\bfk$-algebra $R$ together with a linear operator
    $\D\colon R\to R$ such that
    \begin{equation}
      \D(xy)=\D(x) y+x \D(y)+ \lambda \D(x)\D(y)\quad \text{for all } x,y\in R, \mlabel{eq:diff}
    \end{equation}
    and
    \begin{equation}
      \D(1)=0.
      \mlabel{eq:diffc}
    \end{equation}
    Such an operator is called a \emph{derivation of weight $\lambda$} or a
    \emph{$\lambda$-derivation}.
  \item A \emph{Rota-Baxter $\bfk$-algebra of weight $\lambda$} is an
    associative $\bfk$-algebra $R$ together with a linear operator
    $\p\colon R\to R$ such that
    \begin{equation}
      \p(x)\p(y)=\p(x\p(y))+\p(\p(x)y)+ \lambda \p(xy)\quad \text{for
        all }  x,y\in R.
      \mlabel{eq:rba}
    \end{equation}
    Such an operator is called a \emph{Rota-Baxter operator of weight
      $\lambda$} or a \emph{$\lambda$-Rota-Baxter operator}.
  \item A \emph{differential Rota-Baxter k-algebra of weight
      $\lambda$} (also called a \emph{$\lambda$-differential
      Rota-Baxter $\bfk$-algebra}) is a differential $\bfk$-algebra
    $(R,\D)$ of weight $\lambda$ and a Rota-Baxter operator $\p$ of
    weight $\lambda$ such that
    \begin{equation}
      \D \circ \p = \id_R.
      \mlabel{eq:idcomprotabaxter}
    \end{equation}
  \item An \emph{integro-differential $\bfk$-algebra of weight
      $\lambda$} (also called a \emph{$\lambda$-integro-differential
      $\bfk$-algebra}) is a differential $\bfk$-algebra $(R,\DD)$ of
    weight $\lambda$ with a linear operator $\pp\colon R \to R$ such
    that
    \begin{equation}
      \DD \circ \pp = \id_R \mlabel{eq:idcomp}
    \end{equation}
    and
    \begin{equation}
      \pp(\DD(x)) \pp(\DD(y)) = \pp(\DD(x)) y + x \pp(\DD(y)) -
      \pp(\DD(xy))  \quad \text{for all } x,y\in R.
      \mlabel{eq:diffbaxter}
    \end{equation}
  \end{enumerate}
  \mlabel{def:main}
\end{defn}

When there is no danger of confusion, we will
suppress $\lambda$ and $\bfk$ from the notations.  We will also denote
the set of non-negative integers by $\NN$.

Note that we require that a derivation~$\D$ satisfies $\D(1)=0$. This follows from Eq.~(\mref{eq:diff}) automatically when $\lambda=0$, but is a non-trivial restriction when $\lambda\neq 0$.  In
the next section, we give equivalent characterizations of the
\emph{hybrid Rota-Baxter axiom}~(\mref{eq:diffbaxter}) and discuss
its relation to the \emph{Rota-Baxter axiom}~(\mref{eq:rba}) as
well as consequences of the \emph{section
  axiom}~\eqref{eq:idcomp}. Note that the hybrid Rota-Baxter axiom
does not contain a term with the weight $\lambda$.

We next give some simple examples of differential Rota-Baxter algebras
and integro-differential algebras. As we shall see below
(Lemma~\mref{lem:intdiffRotaBaxter}), the latter are a special
case of the former.
Further examples will be given in later
sections. In particular, the algebras of $\lambda$-Hurwitz series are
integro-differential algebras (Proposition~\ref{pp:hurwdrb}). By
Theorem~\mref{thm:intdiffa}, every regular differential algebra naturally
gives rise to the corresponding free integro-differential algebra.

\begin{exam}
  \begin{enumerate}
  \item By the First Fundamental Theorem of Calculus
    \begin{equation}
      \frac{d}{dx} \Big(\int_a^x f(t)dt\Big) = f(x)
      \mlabel{eq:cal}
    \end{equation}
and the conventional integration-by-parts formula
\begin{equation}
  \mlabel{eq:conv-intparts}
  \int_a^x f(t) g'(t) dt= f(t) g(t)-f(a)g(a)-\int_a^x f'(t) g(t) dt,
\end{equation}
$(\Cinf(\RR),d/dx, \int_a^x)$ is an integro-differential algebra of weight
$0$. As we shall see later in Theorem~\mref{pp:charintdiff},
integration by parts is in fact equivalent to the hybrid
Rota-Baxter axiom \eqref{eq:diffbaxter}.
\item\label{it:divdiff} The following example from~\mcite{GK3} of a differential
  Rota-Baxter algebra is also an integro-differential algebra.
    Let $\lambda \in \RR$, $\lambda \neq 0$.  Let $R=\Cinf(\RR)$ denote the
    $\RR$-algebra of smooth functions $f\colon \RR\to \RR$, and consider the
    usual ``difference quotient'' operator $\DD_\lambda$ on $R$ defined by
    \begin{equation}
      (\DD_\lambda(f))(x) = (f(x+\lambda) - f(x))/\lambda.
      \mlabel{eq:ldiff}
    \end{equation}
    Then $\DD_\lambda$ is a $\lambda$-derivation on $R$.  When
    $\lambda=1$, we obtain the usual difference operator on
    functions. Further, the usual derivation is $\disp{\DD_0:=
      \lim_{\lambda \to 0} \DD_\lambda.}$ Now let $R$ be an
    $\RR$-subalgebra of $\Cinf(\RR)$ that is closed under the
    operators
    \begin{equation*}
      \pp_0(f)(x)=-\int_x^\infty f(t)dt,\quad
      \pp_\lambda(f)(x)=-\lambda\sum_{n\geq 0} f(x+n\lambda).
    \end{equation*}
    For example, $R$ can be taken to be the $\RR$-subalgebra generated
    by $e^{-x}$: $R=\sum_{k\geq 1} \RR e^{-kx}$.
       Then $\pp_\lambda$ is
    a Rota-Baxter operator of weight $\lambda$ and, for the
    $\DD_\lambda$ in Eq.~(\mref{eq:ldiff}),
    \begin{equation*}
      \DD_\lambda\circ \pp_\lambda=\id_R  \quad \text{for all } x,y\in R, 0\neq \lambda\in \RR,
    \end{equation*}
    reducing to the fundamental theorem $\DD_0\circ \pp_0=\id_R$ when
    $\lambda$ goes to $0$.
    We note the close relations of~$(R, \DD_\lambda, \pp_\lambda)$ to the time
    scale calculus~\cite{ABOP} and the quantum calculus~\cite{KC}.
    \mlabel{it:lsum}

    The fact that $(R, \DD_\lambda, \pp_\lambda)$ is actually an
    integro-differential algebra follows from
    Theorem~\ref{prop:char-intdiffalg}(\ref{it:chepe}) since the kernel of
    $\DD_\lambda$ is just the constant functions (in the case $\lambda \ne 0$
    one uses that $R = \sum_{k\geq 1} \RR e^{-kx}$ does not contain periodic
    functions).
  \item\label{ex:counterexample} Here is one example of a differential Rota-Baxter algebra
that is not an integro-differential algebra~\cite[Ex.~3]{RR}. %\mg{This should be added with some
      %explanations.} \li{Here we meant a differential RBA that is not an integro-differential algebra?}
      %\mg{Yes.}
    Let $\bfk$ be a field of characteristic zero, $A = \bfk[y]/(y^4)$, and $(A[x], d)$, where
    $\D$ is the usual derivation with $\D(x^k)=k\,x^{k-1}$. We define a $\bfk$-linear map $\p$ on $A[x]$ by
  \begin{equation}
    \label{eq:baxter-not-diff}
    \p(f) = \pp (f) + f(0,0) \, y^2,
  \end{equation}
  where $\pp$ is the usual integral with $\pp(x^k)=
  x^{k+1}/(k+1)$. Since the second term vanishes under $\D$, we see
  immediately that $\D \circ \p =\id_{A[x]}$. For verifying the
  Rota-Baxter axiom~\eqref{eq:rba} with weight zero, we compute
  \begin{align*}
    & \p(f) \p(g) = \pp (f) \pp(g) +
    g(0,0) \, y^2 \, \pp (f)+ f(0,0) \, y^2  \pp(g) + f(0, 0) g(0, 0) \, y^4,\\
    & \p ( f \p (g)) = \pp (f \, (\pp (g) + g(0,0) \, y^2))
    = \pp (f \, \pp (g) ) + g(0,0)\, y^2 \, \pp (f), \\
    &\p (\p (f) g) = \pp (  (\pp(f) + f(0,0) \, y^2 ) \, g )
    = \pp (\pp(f) g )+ f(0,0) \, y^2 \, \pp(g).
  \end{align*}
    Since $y^4 \equiv 0$ and the usual integral $\pp$ fulfills the
  Rota-Baxter axiom~\eqref{eq:rba}, this implies immediately
  that $\p$ does also. However, it does not fulfill the hybrid Rota-Baxter~\eqref{eq:diffbaxter} since for example
  \[
  \p(\D(x))\p(\D(y))=\p(1)\p(0)=0
  \]
  but we obtain
  \[
  \p(\D(x)) y + x \p(\D(y)) - \p(\D(xy)) = \p(1) y + x \p(0) - \p(y) =  (x+y^2) y - x y = y^3.
  \]
  for the right-hand side.
  \end{enumerate}
  \mlabel{ex:diff}
\end{exam}

\subsection{Basic properties of integro-differential algebras with
  weight}
\label{ssec:intdiffalgs}
We first show that an integro-differential algebra with weight is a differential Rota-Baxter algebra of the same weight. We then give several equivalent conditions for integro-differential algebras.

\begin{lemma}
Let $(R,\DD)$ be a differential algebra of weight $\lambda$ with a linear operator $\pp:R\to R$ such that $\DD\circ \pp=\id_R$. Denote $\J=\pp\circ \DD$.
\begin{enumerate}
\item
The triple $(R,\DD,\pp)$ is a differential Rota-Baxter algebra of weight $\lambda$ if and only if
\begin{equation}
\pp(x)\pp(y)=\J(\pp(x)\pp(y))   \quad \text{for all } x,y\in R,
\mlabel{eq:rba_IJ}
\end{equation}
and if and only if
\begin{equation}
\J(x)\J(y)=\J(\J(x)\J(y))  \quad \text{for all } x,y\in R.
\mlabel{eq:rba_J}
\end{equation}
\mlabel{it:rbeq}
\item
Every integro-differential algebra is a differential Rota-Baxter algebra.
\mlabel{it:iddrb}
\end{enumerate}
\mlabel{lem:intdiffRotaBaxter}
\end{lemma}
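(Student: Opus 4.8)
The plan is to let everything hinge on the operator $\J=\pp\circ\DD$ together with a single application of the $\lambda$-Leibniz rule. First I would record the formal consequences of the section hypothesis $\DD\circ\pp=\id_R$: the operator $\J$ is idempotent, $\J\circ\J=\pp\circ(\DD\circ\pp)\circ\DD=\pp\circ\DD=\J$, and it fixes the image of $\pp$, since $\J\circ\pp=\pp\circ(\DD\circ\pp)=\pp$. The one computation that carries genuine content comes from expanding $\DD$ on a product of two $\pp$-values with the derivation axiom \eqref{eq:diff} and using $\DD\circ\pp=\id_R$:
\[
  \DD(\pp(x)\pp(y))=x\,\pp(y)+\pp(x)\,y+\lambda\,xy .
\]
Applying $\pp$ to this identity and using linearity gives the reformulation
\[
  \J(\pp(x)\pp(y))=\pp(x\pp(y))+\pp(\pp(x)y)+\lambda\,\pp(xy),
\]
whose right-hand side is exactly the right-hand side of the Rota-Baxter axiom \eqref{eq:rba}; this is where the weight term is produced.

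For part (a), the previous display shows that \eqref{eq:rba_IJ}, namely $\pp(x)\pp(y)=\J(\pp(x)\pp(y))$, is literally the Rota-Baxter axiom \eqref{eq:rba}. Hence $(R,\DD,\pp)$ is a differential Rota-Baxter algebra, the section axiom \eqref{eq:idcomprotabaxter} already being among the hypotheses, if and only if \eqref{eq:rba_IJ} holds. The equivalence of \eqref{eq:rba_IJ} with \eqref{eq:rba_J} is then purely formal and uses only $\J\circ\pp=\pp$: specializing \eqref{eq:rba_IJ} at the pair $(\DD x,\DD y)$ turns its two sides into $\J(x)\J(y)$ and $\J(\J(x)\J(y))$, which is \eqref{eq:rba_J}; conversely, specializing \eqref{eq:rba_J} at $(\pp x,\pp y)$ and replacing $\J(\pp x),\J(\pp y)$ by $\pp(x),\pp(y)$ returns \eqref{eq:rba_IJ}.

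For part (b), I would reduce to part (a) by establishing \eqref{eq:rba_J}. Rewriting the hybrid axiom \eqref{eq:diffbaxter} as $\J(x)\J(y)=\J(x)y+x\J(y)-\J(xy)$, I would first extract two auxiliary identities by feeding in $\J$-fixed arguments: substituting $\J(x)$ for $x$, using $\J\circ\J=\J$, and cancelling the common term $\J(x)\J(y)$ yields $\J(\J(x)y)=\J(x)y$, and the symmetric substitution yields $\J(x\J(y))=x\J(y)$. Then I would apply $\J$ to the hybrid axiom; substituting the two auxiliary identities together with $\J\circ\J=\J$ collapses the resulting right-hand side back to $\J(x)y+x\J(y)-\J(xy)$, which the hybrid axiom identifies with $\J(x)\J(y)$. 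This gives $\J(\J(x)\J(y))=\J(x)\J(y)$, i.e.\ \eqref{eq:rba_J}, and part (a) then yields the differential Rota-Baxter structure.

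The argument is largely formal bookkeeping around the idempotent $\J$; the only step with real content is the displayed computation of $\DD(\pp(x)\pp(y))$, which forces the weight term and makes \eqref{eq:rba_IJ} coincide with \eqref{eq:rba}. In part (b) the only point requiring care is the order of operations: the two auxiliary identities must be derived before applying $\J$ to the hybrid axiom, since it is precisely they that allow the right-hand side to collapse.
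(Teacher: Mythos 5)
Your proof is correct. Part (a) is essentially the paper's argument verbatim: compute $\DD(\pp(x)\pp(y))=x\pp(y)+\pp(x)y+\lambda xy$ from the weighted Leibniz rule and $\DD\circ\pp=\id_R$, apply $\pp$ to identify the right-hand side of \eqref{eq:rba_IJ} with that of the Rota--Baxter axiom \eqref{eq:rba}, and pass between \eqref{eq:rba_IJ} and \eqref{eq:rba_J} by the substitutions $(x,y)\mapsto(\DD(x),\DD(y))$ and $(x,y)\mapsto(\pp(x),\pp(y))$ together with $\J\circ\pp=\pp$. For part (b) you take a longer route than the paper: the paper substitutes $\pp(x)$ for $x$ and $\pp(y)$ for $y$ directly in the hybrid axiom \eqref{eq:diffbaxter} and uses $\J\circ\pp=\pp$ to read off \eqref{eq:rba_IJ} in a single line, whereas you first extract the invariance identities $\J(\J(x)y)=\J(x)y$ and $\J(x\J(y))=x\J(y)$ by feeding $\J$-fixed arguments into the hybrid axiom, and then apply $\J$ to the whole axiom to collapse it to \eqref{eq:rba_J}. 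Both are valid formal manipulations of the idempotent $\J$; your detour costs two extra steps but produces as a byproduct exactly condition (\mref{it:chd}) of Theorem~\mref{pp:charintdiff}, which the paper establishes separately later, so nothing is lost mathematically.
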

Note that Eq.~\eqref{eq:rba_IJ} does not contain a term with $\lambda$.
Also note Eq.~\eqref{eq:rba_J} involves only the initialization $\J$ and shows in particular that $\im \J$ is a subalgebra.

\begin{proof}
(\mref{it:rbeq})
Using Eq.~\eqref{eq:diff}, we see that
\[
\DD(\pp(x)\pp(y))=x\pp(y)+\pp(x)y+\lambda x y.
\]
Hence the Rota-Baxter axiom
\begin{equation}
 \pp(x) \pp(y) = \pp(x\pp(y))+\pp(\pp(x)y) +\lambda \pp(xy)
 \mlabel{eq:rba_I}
\end{equation}
is equivalent to Eq.~\eqref{eq:rba_IJ}.
Moreover, substituting $\DD(x)$ for
$x$ and $\DD(y)$ for $y$ in Eq.~\eqref{eq:rba_IJ}, we get the equivalent identity \eqref{eq:rba_J}.
\smallskip

\noindent
(\mref{it:iddrb}) Since $\J\circ \pp = \pp \circ (\DD \circ \pp)=\pp \circ \id_R=\pp$,
we obtain Eq.~\eqref{eq:rba_IJ} from the hybrid Rota-Baxter
axiom~\eqref{eq:diffbaxter} by substituting $\pp(x)$ for $x$ and
$\pp(y)$ for $y$.
\end{proof}

We now give several equivalent conditions for an integro-differential algebra by
starting with a result on complementary projectors on algebras.

\begin{lemma}
  Let $\E$ and $\J$ be projectors on a unitary $\bfk$-algebra $R$
  such that $\E+\J = \id_{R}$. Then the following statements are
  equivalent:
  \begin{enumerate}
  \item $\E$ is an algebra homomorphism,
  \item $\J$ is a derivation of weight $-1$,
  \item $\ker \E = \im \J$ is an ideal and $\im \E
    = \ker \J$ is a unitary subalgebra.
  \end{enumerate}
\mlabel{lem:proj}
\end{lemma}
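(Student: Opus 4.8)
The plan is to first record the purely linear-algebraic consequences of the hypotheses, then to dispatch (a)$\Leftrightarrow$(b) by a direct substitution, and finally to treat (a)$\Leftrightarrow$(c) using the induced decomposition of $R$. First I would note that from $\E+\J=\id_R$ together with $\E^2=\E$ and $\J^2=\J$ one gets $\E\J=\J\E=0$, and hence the identifications $\im\E=\ker\J$ and $\im\J=\ker\E$ (if $\J(x)=0$ then $x=\E(x)\in\im\E$, and conversely $\E$ fixes every element of $\im\E$, so $\J$ vanishes there; symmetrically for the other pair). Consequently $R=\im\E\oplus\im\J$, and every $x\in R$ decomposes uniquely as $x=\E(x)+\J(x)$. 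In particular the set-theoretic identifications asserted in (c) hold automatically, so that statement really only asserts that $\im\J$ is a two-sided ideal and that $\im\E$ is a unitary subalgebra.

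For (a)$\Leftrightarrow$(b) I would simply substitute $\J=\id_R-\E$ into the weight $-1$ derivation identity $\J(xy)=\J(x)y+x\J(y)-\J(x)\J(y)$ and expand the right-hand side. The terms linear in $\E$ cancel in pairs and the expansion collapses to $xy-\E(x)\E(y)$, while the left-hand side is $xy-\E(xy)$; thus the derivation law is equivalent to $\E(xy)=\E(x)\E(y)$. Since $\J(1)=1-\E(1)$, the normalizations $\J(1)=0$ and $\E(1)=1$ also correspond, so $\E$ is a unital algebra homomorphism precisely when $\J$ is a derivation of weight $-1$.

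It remains to link these with (c). The implication (a)$\Rightarrow$(c) is immediate: the image of a unital homomorphism is a unitary subalgebra and its kernel is a two-sided ideal, which under the identifications above are exactly $\im\E$ and $\im\J$. For the converse (c)$\Rightarrow$(a) I would use the decomposition: writing $x=a+u$ and $y=b+v$ with $a,b\in\im\E$ and $u,v\in\im\J=\ker\E$, I expand $xy=ab+av+ub+uv$ and apply $\E$. Because $\im\E$ is a subalgebra, $ab\in\im\E$ and $\E$ fixes it, giving $\E(ab)=ab=\E(x)\E(y)$; because $\im\J$ is an ideal, each of $av$, $ub$, $uv$ lies in $\im\J=\ker\E$ and is annihilated by $\E$. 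Hence $\E(xy)=\E(x)\E(y)$, and $\E(1)=1$ follows since the unitary subalgebra $\im\E$ contains $1$ while $\E$ acts as the identity on $\im\E$. I expect this last direction to be the only real obstacle, since it is the one place where both the ideal property (to kill the three mixed products) and the subalgebra-plus-idempotence property (to retain the leading product) must be combined.
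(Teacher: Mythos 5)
Your proof is correct and follows essentially the same route as the paper's: the equivalence (a)$\Leftrightarrow$(b) by direct substitution of $\J=\id_R-\E$, the implication (a)$\Rightarrow$(c) from the automatic identifications $\ker\E=\im\J$ and $\im\E=\ker\J$, and (c)$\Rightarrow$(a) via the decomposition $x=\E(x)+\J(x)$, killing the mixed products with the ideal property and retaining the leading product with the subalgebra property. The only difference is that you spell out the cancellation in (a)$\Leftrightarrow$(b) explicitly where the paper merely asserts it can be checked directly.
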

\begin{proof}
  ((a) $\Leftrightarrow$ (b))
It can be checked directly that
$\E(xy)=\E(x)\E(y)$ if and only if  $\J(xy)=\J(x)y+x\J(y)-\J(x)\J(y).$
Further it follows from $\E+\J=\id_R$ that
$\E(1)=1$ if and only if $\J(1)=0$.
\smallskip

\noindent
((a) $\Rightarrow$ (c)) is clear once we see that the assumption of the lemma implies $\ker E=\im J$ and $\im E=\ker J$.

\noindent
((c) $\Rightarrow$ (a)) Let $x, y \in R$. Since $R = \im \E \oplus
\ker \E$, we have $x=x_1+x_2$ and $y=y_1+y_2$ with
$x_1=\E(x),y_1=\E(y) \in \im \E$ and $x_2,y_2 \in \ker
\E$.  Then $\E(x_1y_1)= x_1y_1$ since $\im \E$ is by
assumption a subalgebra. Thus
 \[
 \E(xy)= \E(x_1y_1)+\E(x_1y_2)+\E(x_2 y_1) + \E(x_2
 y_2)=x_1y_1=\E(x)\E(y),
 \]
where the last three summands vanish assuming that  $\ker E$ is an ideal.
Moreover,  $1\in \im \E$ implies~$E(1) = 1$.
\end{proof}
We have the following characterizations of integro-differential algebras.
\begin{theorem}
  \label{prop:char-intdiffalg}
  Let $(R,\DD)$ be a differential algebra of weight $\lambda$ with a
  linear operator $\pp$ on $R$ such that $\DD \circ \pp =
  \id_R$.  Denote $\J=\pp\circ \DD$, called the \emph{initialization}, and $\E=\id_{R}-\J$, called the \emph{evaluation}. Then
  the following statements are equivalent:
  \begin{enumerate}
\item $(R,\DD,\pp)$ is an integro-differential algebra;
\mlabel{it:cha}
\item $\E(xy)=\E(x)\E(y)   \quad \text{for all } x,y\in R$;
\mlabel{it:chb}
\item $\ker E = \im \J$ is an ideal;
\mlabel{it:chc}
\item $\J(x \J(y))=x\J(y)$ and $\J(\J(x) y)=\J(x)y   \quad \text{for all } x,y\in R$;
\mlabel{it:chd}
\item $\J(x \pp(y))=x\pp(y)$ and $\J(\pp(x) y)=\pp(x)y   \quad \text{for all } x,y\in R$;
\mlabel{it:chdp}
\item
$x\pp(y) = \pp(\DD(x) \pp(y) ) + \pp(xy) + \lambda \pp(\DD(x) y )$ and
$  \pp(x)y =  \pp(\pp(x) \DD(y) ) + \pp(xy) + \lambda \pp(x \DD(y) )   \quad \text{for all } x,y\in R;$
\mlabel{it:intpart}
\item $(R,\DD,\pp)$ is a differential Rota-Baxter algebra and\\
    $\pp(\E(x)y) = \E(x) \pp(y)$ and $\pp(xE(y)) = \pp(x)\E(y)    \quad \text{for all } x,y\in R$;
\mlabel{it:chepe}
\item $(R,\DD,\pp)$ is a differential Rota-Baxter algebra and\\
    $\J(\E(x) \J(y))=\E(x)\J(y)$ and $\J( \J(x) \E(y))=\J(x)\E(y)   \quad \text{for all } x,y\in R.$
\mlabel{it:cheje}
\end{enumerate}
\mlabel{pp:charintdiff}
\end{theorem}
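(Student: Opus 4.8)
The plan is to reduce the whole theorem to the purely algebraic Lemma~\ref{lem:proj}, after first recording the structural facts about $\J$ and $\E$ forced by the standing hypothesis $\DD\circ\pp=\id_R$. That hypothesis makes $\J=\pp\circ\DD$ idempotent, since $\J^2=\pp(\DD\pp)\DD=\pp\DD=\J$, so $\E=\id_R-\J$ is the complementary projector with $\E+\J=\id_R$ and Lemma~\ref{lem:proj} applies. It also makes $\pp$ injective and $\DD$ surjective, whence $\im\J=\im\pp$ and $\ker\J=\ker\DD$; moreover $\DD(1)=0$ forces $\J(1)=0$ and $\E(1)=1$ for free, and the Leibniz rule~\eqref{eq:diff} shows $\ker\DD$ is a unitary subalgebra. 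The decisive observation is that the hybrid Rota-Baxter axiom~\eqref{eq:diffbaxter}, rewritten through $\J=\pp\DD$, reads $\J(x)\J(y)=\J(x)y+x\J(y)-\J(xy)$, i.e. it says precisely that \emph{$\J$ is a derivation of weight $-1$}. Hence (a) is equivalent to ``$\J$ is a derivation of weight $-1$,'' and by the equivalence (a)$\Leftrightarrow$(b) of Lemma~\ref{lem:proj} this is equivalent to ``$\E$ is an algebra homomorphism,'' which is (b).

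Next I would chain (b) through (e). By the equivalence (b)$\Leftrightarrow$(c) of Lemma~\ref{lem:proj}, statement (b) amounts to ``$\ker\E=\im\J$ is an ideal and $\im\E=\ker\J$ is a unitary subalgebra''; but the subalgebra half is automatic here, so (b)$\Leftrightarrow$(c). For (c)$\Leftrightarrow$(d) I use that, $\J$ being a projector, an element lies in $\im\J$ exactly when it is fixed by $\J$; thus $\im\J$ is an ideal iff $x\J(y)$ and $\J(x)y$ always lie in $\im\J$, which is precisely (d). Since $\im\pp=\im\J$, replacing $\J(y)$ by $\pp(y)$ and $\J(x)$ by $\pp(x)$ turns (d) into (d'), giving (d)$\Leftrightarrow$(d'). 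Finally, expanding $\DD(x\pp(y))=\DD(x)\pp(y)+xy+\lambda\DD(x)y$ (using $\DD\pp=\id$ and~\eqref{eq:diff}) and applying $\pp$ yields $\J(x\pp(y))=\pp(\DD(x)\pp(y))+\pp(xy)+\lambda\pp(\DD(x)y)$, so the first equation of (d') is literally the first equation of (e), and symmetrically for the second; hence (d')$\Leftrightarrow$(e).

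It remains to fold in (f) and (g), where the differential Rota-Baxter hypothesis carries the extra weight. For (a)$\Rightarrow$(f): by Lemma~\ref{lem:intdiffRotaBaxter} statement (a) gives a differential Rota-Baxter algebra, and substituting the constant $\E(x)\in\ker\DD$ for $x$ in the already-established identity (e) collapses it to $\pp(\E(x)y)=\E(x)\pp(y)$, with the symmetric identity the same way. For (f)$\Rightarrow$(g): the chain $\E(x)\J(y)=\E(x)\pp(\DD(y))=\pp(\E(x)\DD(y))\in\im\pp=\im\J$ forces $\J(\E(x)\J(y))=\E(x)\J(y)$, and symmetrically $\J(x)\E(y)=\pp(\DD(x)\E(y))\in\im\J$. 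Finally (g)$\Rightarrow$(d): decompose $x\J(y)=\E(x)\J(y)+\J(x)\J(y)$; the first summand lies in $\im\J$ by (g), while the second lies in $\im\J$ because a differential Rota-Baxter algebra satisfies $\J(x)\J(y)=\J(\J(x)\J(y))$ by Lemma~\ref{lem:intdiffRotaBaxter} (Eq.~\eqref{eq:rba_J}). Hence $x\J(y)\in\im\J$, and symmetrically $\J(x)y\in\im\J$, which is (d), closing all the loops.

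The main obstacle is conceptual rather than computational: one must recognize that~\eqref{eq:diffbaxter} is nothing but the weight-$(-1)$ Leibniz rule for $\J$, so that the entire theorem rests on the elementary projector Lemma~\ref{lem:proj} together with the automatic identifications $\im\pp=\im\J$ and $\ker\J=\ker\DD$. The one genuinely substantive subtlety is that (f) and (g) are \emph{not} equivalent to the rest without the separate differential Rota-Baxter assumption: their localized identities only control products of the shape $\E(x)\J(y)$, and it is exactly Eq.~\eqref{eq:rba_J} that supplies the missing $\J(x)\J(y)\in\im\J$ needed to upgrade these to the full ideal property.
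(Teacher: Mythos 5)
Your proof is correct and follows essentially the same route as the paper's: it reduces (a)--(c) to Lemma~\ref{lem:proj} via the observation that the hybrid Rota--Baxter axiom is the weight-$(-1)$ Leibniz rule for $\J$, passes among (d), (e), and the integration-by-parts form by the substitutions $y\mapsto\pp(y)$ and $y\mapsto\DD(y)$, and uses Eq.~\eqref{eq:rba_J} to recover the ideal property from the localized identities involving $\E(x)\J(y)$. The minor organizational differences --- your direct two-way argument for the equivalence of (c) and (d) via fixed points of the projector $\J$, and proving only one direction between the last two items --- still yield a strongly connected implication graph, so the logic closes.
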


\begin{remark}
{\rm
\begin{enumerate}
\item
Items (\ref{it:chd}) and (\ref{it:chdp}) can be regarded as the invariance
formulation of the hybrid Rota-Baxter axiom.
\item
Item (\ref{it:intpart}) can be seen as a ``weighted'' noncommutative
version of integration by parts: One obtains it in case of weight zero by
substituting $\cum g$ for $g$ in the usual
formula~(\mref{eq:conv-intparts}). This motivates also the name
integro-differential algebra. Clearly, in the commutative case the respective
left and right versions are equivalent.
\item\mlabel{rem:linconst} Since $\im E= \ker \DD$, the identities in Items
  (\ref{it:chepe}) and (\ref{it:cheje}) can be interpreted as left/right linearity of
  respectively $\pp$ and $\J$ over the constants of the derivation $\DD$,
  restricted to~$\im \J$ in the case of~(\ref{it:cheje}). Note again that
  (\ref{it:chepe}) and (\ref{it:cheje}) do not contain a term with $\lambda$.
\end{enumerate}
}
\mlabel{rem:charintdiff}
\end{remark}

\begin{proof}
We first note that under the assumption, we have $\J^2=\pp\circ (\DD \circ \pp) \circ \DD=\pp\circ \id_R \circ \DD
=\J$ and so the initialization $\J$ and evaluation $\E$ are
projectors. Therefore
  \begin{equation}
    \ker \DD  = \ker \J  = \im \E
    \quad\text{and}\quad
    \im \pp  = \im \J  = \ker \E,
    \mlabel{eq:kerdimI}
  \end{equation}
and
\begin{equation*}
R=\ker \DD \oplus \im \pp
\end{equation*}
is a direct sum decomposition.

\noindent
((\ref{it:cha}) $\Leftrightarrow$ (\ref{it:chb})).
It follows from Lemma~\mref{lem:proj} since the hybrid Rota-Baxter axiom~\eqref{eq:diffbaxter}
can be rewritten as
 \begin{equation}
       \J(x) \J(y) = \J(x) y + x \J(y) -
       \J(xy)  \quad \text{for all } x,y\in R.
       \mlabel{eq:diffbaxter_J}
 \end{equation}

\noindent
((\ref{it:chb}) $\Leftrightarrow$ (\ref{it:chc})). It follows from Lemma~\mref{lem:proj}, since
$\ker \DD = \ker \J = \im \E$ is a unitary subalgebra
by Eq.~\eqref{eq:diff} and Eq.~\eqref{eq:diffc}.

\noindent ((\ref{it:cha}) $\Rightarrow$ (\ref{it:chdp})).
We obtain (\ref{it:chdp}) by substituting in Eq.~\eqref{eq:diffbaxter_J} respectively $\pp(y)$
for $y$ and $\pp(x)$ for $x$.

\noindent ((\ref{it:chdp}) $\Leftrightarrow$ (\ref{it:chd})).
Substituting respectively $\DD(y)$ for
$y$ and $\DD(x)$ for $x$ in (\ref{it:chdp}) gives (\ref{it:chd}). Conversely, substituting respectively $\pp(y)$ for
$y$ and $\pp(x)$ for $x$ in (\ref{it:chd}) gives (\ref{it:chdp}).

\noindent ((\ref{it:chdp}) $\Leftrightarrow$ (\ref{it:intpart})). It follows from Eq.~\eqref{eq:diff}.

\noindent ((\ref{it:cha}) $\Rightarrow$ (\ref{it:chepe})).  By
Lemma~\mref{lem:intdiffRotaBaxter}, $(R,\DD,\pp)$ is a differential Rota-Baxter
algebra. Furthermore, using Eq.~\eqref{eq:diff} and $\DD\circ \E =0$, we see
that
\[
\DD(\E(x)\pp(y))=E(x)y \quad \text{and} \quad \DD(\pp(x)\E(y))=x\E(y)
\]
and so
\begin{equation*}
\J(\E(x)\pp(y))=\pp(E(x)y) \quad \text{and} \quad \J(\pp(x)\E(y))=\pp(x\E(y)).
\end{equation*}
Since we have proved (\ref{it:chdp}) from (\ref{it:cha}), we can respectively
substitute $E(x)$ for $x$ and $E(y)$ for $y$ in (\ref{it:chdp}) to get
(\ref{it:chepe}).

\noindent ((\ref{it:chepe}) $\Leftrightarrow$ (\ref{it:cheje})).
Further, from $\pp(\E(x)y)=\E(x)\pp(y)$ we obtain
$$\J(\E(x)J(y))=\pp(\DD(\E(x)J(y)))=\pp(\E(x)\DD(y))=\E(x)\J(y),$$
Conversely, from $\J(\E(x)\J(y))=\E(x)\J(y)$ we obtain
$$\pp(\E(x)y)=\pp(\DD(\E(x)\pp(y))) = \J(\E(x)\pp(y)) = \J(\E(x)\J(\pp(y))) =
\E(x) \pp(y)$$ using $\pp=\J \circ \pp$ and $\DD(\E(x)\pp(y))=E(x)y$. This
proves the equivalence of the first equations in (\ref{it:chepe}) and
(\ref{it:cheje}); the same proof gives the equivalence of the second equations.

\noindent
((\ref{it:chd}) $\Rightarrow$ (\ref{it:chc})). This is clear since the
identities imply that $\im \J$ is an ideal.

\noindent
((\ref{it:cheje}) $\Rightarrow$ (\ref{it:chdp})). Note that $\J( \E(x) \J(y))=\E(x)\J(y)$  gives
\[
\J(x\J(y))-\J(\J(x) \J(y)) =x \J(y)-\J(x)\J(y)
\]
and hence $\J(x\J(y))= x \J(y)$ with the Rota-Baxter axiom in the
form of Eq.~\eqref{eq:rba_J}. The identity $\J(\J(x) y)=\J(x)y$ follows analogously.
\end{proof}

\section{Free commutative integro-differential algebras}
\mlabel{sec:commfreeid}

We first review the constructions of free commutative differential algebra
with weight, free commutative Rota-Baxter algebras and free commutative differential Rota-Baxter algebras. These constructions are then applied in Section~\ref{ss:fida} to obtain free commutative integro-differential algebras and will be applied in Section~\mref{sec:fida} to give an explicit construction of free commutative integro-differential algebras.

\subsection{Free and cofree differential algebras of weight $\lambda$}

We recall the construction~\mcite{GK3} of free commutative differential algebras of weight $\lambda$.

\begin{theorem}
  Let $X$ be a set. Let
  \begin{equation*}
    \diffs(X)=X\times \NN= \{ x^{(n)}\, \big|\, x\in X, n\geq 0\}.
  \end{equation*}
Let $\bfk\diffa{X}$ be the free commutative algebra
    $\bfk[\diffs{X}]$ on the set $\diffs{X}$.  Define $\D_X\colon
    \bfk\diffa{X} \to \bfk\diffa{X}$ as follows. Let $w=u_1\cdots u_k,
    u_i\in \diffs{X}$, $1\leq i\leq k$, be a commutative word from the
    alphabet set $\Delta(X)$.  If $k=1$, so that $w=x^{(n)}\in
    \Delta(X)$, define $\D_X(w)=x^{(n+1)}$. If $k>1$, recursively
    define
    \begin{equation}
      \D_X(w)=\D_X(u_1)u_2\cdots u_k+u_1\D_X(u_2\cdots u_k)+\lambda
      \D_X(u_1)\D_X(u_2\cdots u_k).
      \mlabel{eq:prodind}
    \end{equation}
    Further define $\D_X(1)=0$ and then extend $\D_X$ to
    $\bfk\diffa{X}$ by linearity.  Then $(\bfk\diffa{X},\D_X)$ is the
    free commutative differential algebra of weight $\lambda$ on the
    set $X$.
  \mlabel{thm:freediff}
\end{theorem}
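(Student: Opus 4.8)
The plan is to prove the two defining halves of freeness separately: first that $(\bfk\diffa{X},\D_X)$ is genuinely a commutative $\lambda$-differential $\bfk$-algebra, and then that it satisfies the required universal property. The only real subtlety lies in the first half, because the recursive formula~\eqref{eq:prodind} privileges the factorization $u_1\mid u_2\cdots u_k$ of a commutative word, so one must check that $\D_X(w)$ does not depend on how the letters of $w$ are ordered, and that the weighted Leibniz rule~\eqref{eq:diff} holds for \emph{all} pairs of elements and not merely for the single split used in the definition.

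To dispatch this cleanly, I would first establish the closed form
\begin{equation*}
  \D_X(u_1\cdots u_k)=\sum_{\emptyset\neq S\subseteq\{1,\dots,k\}}\lambda^{|S|-1}\prod_{i\in S}\D_X(u_i)\prod_{j\notin S}u_j
\end{equation*}
on monomials (the empty product recovering $\D_X(1)=0$). Its right-hand side is manifestly symmetric in $u_1,\dots,u_k$, which gives well-definedness, and an easy induction on $k$ shows it agrees with the recursive definition~\eqref{eq:prodind}. Splitting the index set of a product $w_1 w_2$ into the blocks coming from $w_1$ and from $w_2$, and separating the subsets $S$ that meet only the first block, only the second, or both, factors the sum precisely into $\D_X(w_1)w_2+w_1\D_X(w_2)+\lambda\,\D_X(w_1)\D_X(w_2)$; since both sides of~\eqref{eq:diff} are bilinear, checking the identity on monomials suffices. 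Together with $\D_X(1)=0$ this shows $(\bfk\diffa{X},\D_X)$ is a commutative $\lambda$-differential algebra. (One may also note that~\eqref{eq:diff} forces $\id+\lambda\D_X$ to be a unital $\bfk$-algebra endomorphism, which offers an alternative route to the same conclusion.)

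For the universal property, let $(R,d)$ be any commutative $\lambda$-differential $\bfk$-algebra and $f\colon X\to R$ a map, where $X$ sits inside $\bfk\diffa{X}$ via $x\mapsto x^{(0)}$. Since $\D_X^{\,n}(x^{(0)})=x^{(n)}$, any $\lambda$-differential algebra homomorphism $\bar f$ extending $f$ is forced to satisfy $\bar f(x^{(n)})=d^{\,n}(f(x))$ on every generator in $\diffs{X}$; by the universal property of the polynomial algebra $\bfk[\diffs{X}]$ this determines $\bar f$ uniquely as a $\bfk$-algebra homomorphism, giving uniqueness. For existence I would \emph{define} $\bar f$ on $\diffs{X}$ by this formula and extend multiplicatively, then verify $\bar f\circ\D_X=d\circ\bar f$. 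This holds on generators by construction, since $\bar f(\D_X(x^{(n)}))=d^{\,n+1}(f(x))=d(\bar f(x^{(n)}))$, and it propagates to products by induction on the number of factors: using that $\bar f$ is multiplicative and that both $\D_X$ and $d$ obey the weighted Leibniz rule, $\bar f(\D_X(w_1 w_2))$ expands through~\eqref{eq:diff} into $d(\bar f(w_1)\bar f(w_2))=d(\bar f(w_1 w_2))$, with the unit handled trivially by $\D_X(1)=0=d(1)$.

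The main obstacle is the well-definedness and derivation property of $\D_X$ in the first half; the universal-property half is entirely forced once that is in place. I expect the subset-sum identity above to be the workhorse that makes both order-independence and the full Leibniz rule transparent, so I would concentrate the bookkeeping there and treat the rest as routine propagation of algebra- and derivation-compatibilities from generators to monomials.
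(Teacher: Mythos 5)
Your proposal is correct and complete; the paper itself states Theorem~\ref{thm:freediff} as a recalled result from~\cite{GK3} without reproducing a proof, and your argument follows the standard route used there. Your workhorse subset-sum identity $\D_X(u_1\cdots u_k)=\sum_{\emptyset\neq S}\lambda^{|S|-1}\prod_{i\in S}\D_X(u_i)\prod_{j\notin S}u_j$ is in fact exactly the formula the authors invoke later (in the proof of Proposition~\ref{pp:diffpoly}) to analyze $\D$ on monomials, so your handling of well-definedness, the weighted Leibniz rule, and the forced extension $\bar f(x^{(n)})=d^{\,n}(f(x))$ matches the intended argument.
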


  The use of $\bfk\diffa{X}$ for free commutative differential
  algebras of weight $\lambda$ is consistent with the notation of the
  usual free commutative differential algebra (when $\lambda=0$).

We also review the following construction from~\mcite{GK3}. For any commutative $\bfk$-algebra $A$, let $A^{\NN}$ denote the $\bfk$-module of all
functions $f\colon \NN \rightarrow A$.  We define the \emph{$\lambda$-Hurwitz product} on $A^{\NN}$
by defining, for any $f, g \in A^{\NN}$, $fg \in A^{\NN}$ by
\[(fg)(n) = \sum_{k=0}^{n}\sum_{j=0}^{n-k}\binc{n}{k}\binc{n-k}{j}
\lambda^{k}f(n-j)g(k+j).\]
We denote the $\bfk$-algebra
$A^{\NN}$ with this product by $DA$, and call it the $\bfk$-algebra of
\emph{$\lambda$-Hurwitz series over $A$}.
It was shown in~\mcite{GK3} that $DA$ is a differential Rota-Baxter
algebra of weight $\lambda$ with the operators
$$\DD\colon DA\to DA, \quad (\DD(f))(n)=f(n+1), n\geq 0, f\in DA,
$$
$$  \pp \colon DA\to DA, \quad (\pp(f))(n)=f(n-1), n\geq 1,
  (\pp(f))(0)=0, f\in DA.
$$
In fact, $DA$ is the cofree differential algebra of weight $\lambda$
on $A$.  We similarly have
\begin{prop}
  The triple $(DA, \DD, \pp)$ is an integro-differential algebra of
  weight $\lambda$.  \mlabel{pp:hurwdrb}
\end{prop}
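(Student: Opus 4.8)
The plan is to apply the characterization of integro-differential algebras in Theorem~\mref{prop:char-intdiffalg}. Since $DA$ is already known (from~\mcite{GK3}) to be a differential Rota-Baxter algebra of weight $\lambda$, the pair $(DA,\DD)$ is a differential algebra of weight $\lambda$ and $\DD\circ\pp=\id_{DA}$, so the hypotheses of that theorem are met and it suffices to verify any single one of its equivalent conditions. The most economical is condition~(\mref{it:chb}): that the evaluation $\E=\id_{DA}-\J$ is an algebra homomorphism.

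First I would compute $\J=\pp\circ\DD$ and $\E$ explicitly. From the formulas $(\DD f)(n)=f(n+1)$ and $(\pp f)(n)=f(n-1)$ for $n\geq 1$, $(\pp f)(0)=0$, one gets $(\J f)(n)=f(n)$ for $n\geq 1$ and $(\J f)(0)=0$, so that $\E$ is simply evaluation at degree zero: $(\E f)(0)=f(0)$ and $(\E f)(n)=0$ for $n\geq 1$. Thus $\im\E=\ker\DD$ is exactly the set of functions supported at $0$, as predicted by the general theory underlying Theorem~\mref{prop:char-intdiffalg}.

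The verification then reduces to a single computation with the $\lambda$-Hurwitz product. Evaluating the defining formula at $n=0$ collapses both sums to the term $k=j=0$, giving $(fg)(0)=f(0)g(0)$. Hence $\E(fg)$ and $\E(f)\E(g)$ agree in degree $0$; and since a product in $DA$ of two functions supported at $0$ is again supported at $0$ (again because only the term $k=j=0$ can contribute once the arguments force $n-j=0$ and $k+j=0$), both also vanish in every positive degree. Therefore $\E(fg)=\E(f)\E(g)$, and condition~(\mref{it:chb}) holds.

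Equivalently, one could verify condition~(\mref{it:chc}): that $\im\J=\ker\E=\{f\in DA\mid f(0)=0\}$ is an ideal, which follows at once from the same identity $(fg)(0)=f(0)g(0)$. In either case the argument is short and there is no genuine obstacle; the only point to check is that the Hurwitz convolution is ``diagonal'' in degree zero, which is immediate from the binomial weights. Invoking the implication (\mref{it:chb})$\Rightarrow$(\mref{it:cha}) (or (\mref{it:chc})$\Rightarrow$(\mref{it:cha})) of Theorem~\mref{prop:char-intdiffalg} then yields that $(DA,\DD,\pp)$ is an integro-differential algebra of weight $\lambda$.
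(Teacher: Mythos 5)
Your proof is correct. It follows the same overall strategy as the paper---since $DA$ is already a differential Rota-Baxter algebra by~\mcite{GK3}, one only needs to verify a single condition from Theorem~\mref{pp:charintdiff}---but you check a different item of the equivalence: you verify condition~(\mref{it:chb}) (multiplicativity of $\E$) by computing $\E$ explicitly as ``evaluation at degree zero'' and observing that the $\lambda$-Hurwitz product collapses to $(fg)(0)=f(0)g(0)$ at $n=0$, whereas the paper verifies condition~(\mref{it:chepe}), namely $\pp(\E(x)y)=\E(x)\pp(y)$, which follows in one line from $\im\E=\ker\DD=A$ together with the $A$-linearity of $\pp$ (shifting the index commutes with multiplication by a series supported in degree $0$). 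Your route is slightly more computational but entirely self-contained: it makes explicit what $\J$ and $\E$ are and why $\ker\E=\{f : f(0)=0\}$ is an ideal, whereas the paper's argument leans on the previously established $A$-module structure of $DA$ and is shorter. Both are valid; the only point worth noting is that your verification that $\E(f)\E(g)$ vanishes in positive degrees (because the constraints $n-j=0$ and $k+j=0$ force $n=0$) is exactly the same degree-zero ``diagonality'' of the Hurwitz convolution that underlies the paper's appeal to $A$-linearity.
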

\begin{proof}
  Since $(DA, \DD, \pp)$ is a differential Rota-Baxter algebra, we
  only need to show that $\pp(\E(x)y)=\E(x)\pp(y)$ for $x, y\in DA$ by Theorem~\mref{pp:charintdiff}. But this is
  clear since $\im \E=\ker\DD=A$ and $\pp$ is $A$-linear.
\end{proof}

\subsection{Free commutative Rota-Baxter algebras}

We briefly recall the construction of free commutative Rota-Baxter
algebras.  Let $A$ be a commutative
$\bfk$-algebra. Define
\begin{equation}
  \sha (A)= \bigoplus_{k\in\NN} A^{\otimes (k+1)} = A\oplus A^{\otimes
    2}\oplus \cdots,
\label{eq:freerb}
\end{equation}
where and hereafter all the tensor products are taken over $\bfk$ unless otherwise stated.
Let $\fraka=a_0\ot \cdots \ot a_m\in A^{\ot (m+1)}$ and $\frakb=b_0\ot
\cdots \ot b_n\in A^{\ot (n+1)}$. If $m=0$ or $n=0$, define
\begin{equation}
  \fraka \shpr \frakb =\left \{\begin{array}{ll}
      (a_0b_0)\ot b_1\ot \cdots \ot b_n, & m=0, n>0,\\
      (a_0b_0)\ot a_1\ot \cdots \ot a_m, & m>0, n=0,\\
      a_0b_0, & m=n=0.
    \end{array} \right .
    \mlabel{eq:shpr0}
\end{equation}
If $m>0$ and $n>0$, inductively (on $m+n$) define
\begin{eqnarray}
  \fraka \shpr \frakb & = &
  (a_0b_0)\ot \Big(
  (a_1\ot a_2\ot \cdots \ot a_m) \shpr (1_A \ot b_1\ot \cdots \ot b_n) \notag \\
  &&
  \qquad \qquad +
  \; (1_A \ot a_1\ot \cdots \ot a_m) \shpr (b_1\ot \cdots \ot b_n) \mlabel{eq:shpr}\\
  && \qquad \qquad +
  \lambda\, (a_1\ot \cdots \ot a_m) \shpr (b_1\ot \cdots \ot b_n)\Big).
  \notag
\end{eqnarray}
Extending by additivity, we obtain a $\bfk$-bilinear map
\begin{equation*}
  \shpr: \sha (A) \times \sha (A) \rar \sha (A).
\end{equation*}
Alternatively,
\begin{equation*}
  \fraka\shpr \frakb=(a_0b_0)\ot (\lbar{\fraka} \mxssha
  \lbar{\frakb}),
\end{equation*}
where~$\bar{\fraka} = a_1 \ot \cdots \ot a_m$, $\bar{\frakb} = b_1 \ot
\cdots \ot b_n$ and~$\mxssha$ is the mixable shuffle (quasi-shuffle)
product of weight $\lambda$~\mcite{Gub,GK1,Ho}, which specializes to the
shuffle product $\ssha$ when $\lambda=0$.

Define a $\bfk$-linear endomorphism $\p_A$ on $\sha (A)$ by assigning
\[
\p_A( a_0\otimes a_1\otimes \dots \otimes a_n) =1_A\otimes
a_0\otimes a_1\otimes \dots\otimes a_n, \] for all $a_0\otimes
a_1\otimes \dots\otimes a_n\in A^{\otimes (n+1)}$ and extending by
additivity.  Let $j_A\colon A\rar \sha (A)$ be the canonical inclusion
map.

\begin{theorem}\emph{$($\cite{GK1,GK2}$)$}
  \mlabel{thm:shua} The pair $(\sha(A),\p_A)$, together with the
  natural embedding $j_A\colon A\rightarrow \sha (A)$, is a free
  commutative Rota-Baxter $\bfk$-algebra on $A$ of weight $\lambda$.
  In other words, for any Rota-Baxter $\bfk$-algebra $(R,\p)$ and any
  $\bfk$-algebra map $\varphi\colon A\rar R$, there exists a unique
  Rota-Baxter $\bfk$-algebra homomorphism $\tilde{\varphi}\colon (\sha
  (A),\p_A)\rar (R,\p)$ such that $\varphi = \tilde{\varphi} \circ
  j_A$ as $\bfk$-algebra homomorphisms.
\end{theorem}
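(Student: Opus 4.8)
The plan is to establish two separate facts: that $(\sha(A), \shpr, \p_A)$ is a commutative Rota-Baxter $\bfk$-algebra of weight $\lambda$, and that it enjoys the asserted universal property. For the algebra structure I would first record that commutativity of $\shpr$ is immediate from the symmetry of the mixable shuffle product $\mxssha$ and of the multiplication in $A$, via the alternative description $\fraka \shpr \frakb = (a_0 b_0) \ot (\bar{\fraka} \mxssha \bar{\frakb})$. The one genuinely technical point is associativity of $\shpr$, which is the associativity of the quasi-shuffle product; I would prove it by induction on the total tensor degree using the recursion \eqref{eq:shpr}, or simply cite \cite{Ho, Gub}. Granting the algebra structure, that $\p_A$ is a Rota-Baxter operator of weight $\lambda$ falls out directly: unwinding $\p_A(\fraka) \shpr \p_A(\frakb)$ by a single application of \eqref{eq:shpr}, with both leading tensor factors equal to $1_A$, reproduces verbatim the three terms $\p_A(\fraka \shpr \p_A(\frakb)) + \p_A(\p_A(\fraka) \shpr \frakb) + \lambda\, \p_A(\fraka \shpr \frakb)$, so the recursive definition of $\shpr$ is precisely engineered to encode axiom \eqref{eq:rba}.

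For the universal property, the organizing observation is that $A$ generates $\sha(A)$ as a Rota-Baxter algebra. Indeed, the $m=0$ case of \eqref{eq:shpr0} gives $a_0 \shpr \p_A(\frakb) = a_0 \ot \frakb$ for every $\frakb$, so by induction on $n$,
\[
a_0 \ot a_1 \ot \cdots \ot a_n = a_0 \shpr \p_A\big(a_1 \shpr \p_A(a_2 \shpr \cdots \shpr \p_A(a_n) \cdots)\big).
\]
Any Rota-Baxter homomorphism $\tilde{\varphi}$ with $\tilde{\varphi} \circ j_A = \varphi$ must therefore send $a_0 \ot \cdots \ot a_n$ to $\varphi(a_0)\,\p\big(\varphi(a_1)\,\p(\cdots \p(\varphi(a_n)) \cdots)\big)$, so its values on the spanning pure tensors are forced; this yields uniqueness. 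For existence I would take that forced formula as the definition of $\tilde{\varphi}$ on pure tensors and extend by $\bfk$-linearity. This is well defined because the right-hand side is $\bfk$-linear in each tensor slot and hence descends to $A^{\ot(n+1)}$. The relations $\tilde{\varphi} \circ j_A = \varphi$ and $\tilde{\varphi} \circ \p_A = \p \circ \tilde{\varphi}$ are then immediate from the definition (the latter because prepending $1_A$ and using $\varphi(1_A)=1_R$ amounts to applying one more $\p$).

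The hard part will be multiplicativity, $\tilde{\varphi}(\fraka \shpr \frakb) = \tilde{\varphi}(\fraka)\, \tilde{\varphi}(\frakb)$, which I would prove by induction on the total tensor degree $m+n$. The base cases $m=0$ or $n=0$ follow from \eqref{eq:shpr0} and multiplicativity of $\varphi$. For $m, n > 0$, I would apply $\tilde{\varphi}$ to the three-term recursion \eqref{eq:shpr}, invoke the induction hypothesis on each of the three resulting lower-degree products together with $\tilde{\varphi} \circ \p_A = \p \circ \tilde{\varphi}$, and then recognize the outcome as $\p(\tilde{\varphi}(\bar{\fraka}))\,\p(\tilde{\varphi}(\bar{\frakb}))$ by reading the Rota-Baxter axiom \eqref{eq:rba} for $\p$ in $R$ backwards. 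In short, the induction on the product side mirrors exactly the recursion defining $\shpr$, and the Rota-Baxter relation in the target $R$ is what collapses the three terms; this computation, together with the associativity of $\shpr$, carries the real content of the theorem.
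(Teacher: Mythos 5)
Your proposal is correct, but note that the paper does not prove this theorem at all --- it is imported verbatim from \cite{GK1,GK2}, so there is no internal proof to compare against. Your argument is a faithful reconstruction of the standard proof from those references: the recursion \eqref{eq:shpr} is indeed designed so that $\p_A(\fraka)\shpr\p_A(\frakb)$ unwinds to the three Rota--Baxter terms, the identity $a_0\ot\cdots\ot a_n=a_0\shpr\p_A(a_1\shpr\p_A(\cdots\p_A(a_n)\cdots))$ forces uniqueness, and the multiplicativity of the forced $\tilde\varphi$ follows by the induction on total tensor degree you describe, with the Rota--Baxter axiom in $R$ collapsing the three terms; the only ingredient you defer (associativity of $\shpr$) is correctly attributed to \cite{Ho,Gub}.
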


Since $\shpr$ is compatible with the multiplication in $A$, we will suppress the
symbol $\shpr$ and simply denote $x y$ for $x\shpr y$ in $\sha (A)$, unless
there is a danger of confusion.

Let $(A, \D)$ be a commutative differential $\bfk$-algebra of weight
$\lambda$.  Define an operator $\D_A$ on $\sha (A)$ by assigning
\begin{eqnarray}
  \lefteqn{ \D_A(a_0\otimes a_1\otimes \dots\otimes a_n)}\notag\\
  &=&
  \D(a_0)\otimes a_1\otimes \dots \otimes a_n + a_0a_1\otimes
  a_2 \otimes \dots \otimes a_n
  +\lambda \D(a_0) a_1\otimes a_2\otimes
  \dots \otimes a_n
\mlabel{eq:diffa}
\end{eqnarray}
for $a_0\otimes \dots \otimes a_n\in A^{\otimes (n+1)}$ and then
extending by $\bfk$-linearity. Here we use the convention that
$\D_A(a_0)=\D(a_0)$ when $n=0$.

\begin{theorem}\emph{$($\cite{GK3}$)$}
  \mlabel{thm:commdiffrb} Let $(A,\D)$ be a commutative differential
  $\bfk$-algebra of weight $\lambda$. Let
$j_A\colon A \rar \sha (A)$
    be the $\bfk$-algebra
    embedding $($in fact a morphism of differential $\bfk$-algebras of weight $\lambda$$)$.
The quadruple $(\sha(A), \D_A, \p_A, j_A)$ is a free
    commutative differential Rota-Baxter $\bfk$-algebra of weight
    $\lambda$ on $(A, \D)$.
  \label{thm:freecommdiffrb}
\end{theorem}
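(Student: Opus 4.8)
The plan is to verify the universal property directly. Given a commutative differential Rota-Baxter $\bfk$-algebra $(R,\D_R,\p_R)$ of weight $\lambda$ together with a morphism of differential algebras $\varphi\colon (A,\D)\to (R,\D_R)$, I must produce a unique differential Rota-Baxter morphism $\tilde\varphi\colon (\sha(A),\D_A,\p_A)\to (R,\D_R,\p_R)$ with $\varphi=\tilde\varphi\circ j_A$. The key observation is that we already have Theorem~\ref{thm:shua}, which gives the universal property of $(\sha(A),\p_A)$ purely as a Rota-Baxter algebra: forgetting the differential structure, there is a unique Rota-Baxter algebra homomorphism $\tilde\varphi$ extending $\varphi$. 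So the existence and uniqueness of $\tilde\varphi$ as an algebra map compatible with $\p$ come for free, and the entire burden of the proof is to check that this same $\tilde\varphi$ also intertwines the derivations, i.e. $\tilde\varphi\circ\D_A=\D_R\circ\tilde\varphi$.

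First I would record the explicit formula for $\tilde\varphi$ coming from Theorem~\ref{thm:shua}: on a pure tensor $a_0\ot a_1\ot\cdots\ot a_n$ one has
\[
\tilde\varphi(a_0\ot a_1\ot\cdots\ot a_n)=\varphi(a_0)\,\p_R\bigl(\varphi(a_1)\,\p_R(\cdots \p_R(\varphi(a_n))\cdots)\bigr),
\]
since $a_0\ot\cdots\ot a_n = (j_A(a_0))\,\p_A(j_A(a_1)\,\p_A(\cdots))$ in $\sha(A)$ and $\tilde\varphi$ is multiplicative and commutes with $\p$. Then I would prove $\tilde\varphi\circ\D_A=\D_R\circ\tilde\varphi$ by induction on the tensor length $n$. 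The base case $n=0$ is exactly the hypothesis that $\varphi$ is a $\lambda$-derivation morphism, $\tilde\varphi(\D_A(a_0))=\varphi(\D(a_0))=\D_R(\varphi(a_0))$. For the inductive step I would apply $\D_R$ to the explicit formula above, using the Leibniz rule of weight $\lambda$ (Eq.~\eqref{eq:diff}) for the product $\varphi(a_0)\cdot\p_R(\cdots)$ together with the defining relation $\D_R\circ\p_R=\id_R$ (Eq.~\eqref{eq:idcomprotabaxter}) to collapse the outer $\p_R$. Matching this against the three terms in the definition of $\D_A$ in Eq.~\eqref{eq:diffa} — the $\D(a_0)\ot\cdots$ term, the contraction $a_0a_1\ot\cdots$ term, and the weighted $\lambda\,\D(a_0)a_1\ot\cdots$ term — should produce exactly the expansion given by Leibniz and $\D_R\p_R=\id_R$, with the inductive hypothesis handling the remaining shorter tensor.

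The main obstacle I anticipate is the bookkeeping in the inductive step, specifically making the three terms of Eq.~\eqref{eq:diffa} line up with the output of the weight-$\lambda$ Leibniz rule after the cancellation $\D_R\p_R=\id$. The subtlety is that $\D_R$ applied to the product $\varphi(a_0)\,\p_R(w)$, where $w=\varphi(a_1)\,\p_R(\cdots)$, produces $\D_R(\varphi(a_0))\,\p_R(w) + \varphi(a_0)\,\D_R(\p_R(w)) + \lambda\,\D_R(\varphi(a_0))\,\D_R(\p_R(w))$, and the middle and last terms use $\D_R\p_R(w)=w=\varphi(a_1)\,\p_R(\cdots)$ to reintroduce exactly the contracted tensors $a_0a_1\ot a_2\ot\cdots$ and $\lambda\,\D(a_0)a_1\ot a_2\ot\cdots$ under $\tilde\varphi$; one must check that $\varphi$ being an algebra map gives $\varphi(a_0)\varphi(a_1)=\varphi(a_0a_1)$ so these really are images of the asserted tensors. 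Once this single algebraic identity is verified at the top level, the induction closes cleanly, and since $\tilde\varphi$ is already the unique Rota-Baxter morphism extending $\varphi$, uniqueness as a differential Rota-Baxter morphism is automatic.
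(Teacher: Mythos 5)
First, note that the paper itself offers no proof of this statement: it is quoted from~\cite{GK3}, so the comparison here is against the standard argument rather than a proof printed in this paper. The universal-property half of your proposal is correct and is exactly the natural route: reduce to Theorem~\ref{thm:shua} to get the unique Rota--Baxter morphism $\tilde\varphi$ with its explicit formula on pure tensors, then check $\tilde\varphi\circ\D_A=\D_R\circ\tilde\varphi$ by expanding $\D_R\bigl(\varphi(a_0)\,\p_R(w)\bigr)$ with the weight-$\lambda$ Leibniz rule~\eqref{eq:diff} and collapsing $\D_R\circ\p_R=\id_R$; the three resulting terms match the three terms of~\eqref{eq:diffa} term by term (in fact this is a direct computation on each pure tensor and needs no induction), and uniqueness is indeed inherited from Theorem~\ref{thm:shua}.

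The gap is earlier: you assert that ``the entire burden of the proof'' is the intertwining of derivations, but the theorem also claims that $(\sha(A),\D_A,\p_A)$ \emph{is} a commutative differential Rota--Baxter algebra of weight $\lambda$, and this is where the real computational work lies. Concretely, one must show that the operator $\D_A$ of Eq.~\eqref{eq:diffa} satisfies $\D_A(\fraka\shpr\frakb)=\D_A(\fraka)\shpr\frakb+\fraka\shpr\D_A(\frakb)+\lambda\,\D_A(\fraka)\shpr\D_A(\frakb)$ for the mixable shuffle product of Eqs.~\eqref{eq:shpr0}--\eqref{eq:shpr}. This is not automatic: $\D_A$ is defined by a formula on pure tensors, while $\shpr$ is defined by a separate recursion on the lengths $m+n$, so verifying the Leibniz rule requires its own induction tracking how the three branches of~\eqref{eq:shpr} interact with the three terms of~\eqref{eq:diffa}. (The remaining structural checks, $\D_A\circ\p_A=\id_{\sha(A)}$ and $\D_A\circ j_A=j_A\circ\D$, are immediate from~\eqref{eq:diffa} and $\D(1)=0$, but should still be stated.) Without the Leibniz verification, the universal property you establish concerns an object not yet known to lie in the category, so the proof is incomplete as proposed.
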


\subsection{The existence of free commutative integro-differential algebras}
\mlabel{ss:fida}

The free objects in the category of commutative integro-differential algebras of weight $\lambda$ are defined in a
similar fashion as for the category of commutative differential Rota-Baxter algebras.

\begin{defn}
  \label{def:freeintdiffalg}
  Let $(A,\D)$ be a $\lambda$-differential algebra over~$\bfk$.  A
  \emph{free integro-differential algebra of weight $\lambda$ on
    $A$} is an integro-differential algebra $(\fid{A}, \DD_A, \pp_A)$
  of weight $\lambda$ together with a differential algebra
  homomorphism $i_A\colon (A,\D)\to (\fid{A},\D_A)$ such that, for any
  integro-differential algebra $(R, \DD, \pp)$ of weight $\lambda$ and a
  differential algebra homomorphism $f\colon (A,\D)\to (R,\DD)$, there is a
  unique integro-differential algebra homomorphism $\free{f}\colon
  \fid{A}\to R$ such that $\free{f}\circ i_A= f.$ When $A=\bfk\diffa{u}$,
  we let $\fid{u}$ denote $\fid{A}$.  \mlabel{de:freeid}
\end{defn}

As in Theorem~\mref{thm:commdiffrb}, let $(\sha (A), \D_A, \p_A)$ be
the free commutative differential Rota-Baxter algebra generated by the
differential algebra $(A, \D)$. Then by
Theorem~\mref{pp:charintdiff}, we have

\begin{theorem}
  \label{thm:freex}
  Let $(A,\D)$ be a commutative differential
  $\bfk$-algebra of weight $\lambda$. Let $I_\ID$ be the differential Rota-Baxter ideal of $\sha(A)$
  generated by the set
  \begin{equation*}
    \{\J\big(\E(x) \, \J(y)\big)-\E(x) \, \J(y) \ \big|\ x,y\in
    \sha (A)\},
  \end{equation*}
  where~$\J$ and~$\E$ denote the projectors~$\p_A \circ \D_A$ and~$\id_A
  - \p_A \circ \D_A$, respectively. Let $\DDD_A$ (resp. $\pp_A$) denote
  $\D_A$ (resp. $\p_A$) modulo $I_\ID$.  Then the quotient
  differential Rota-Baxter algebra $(\sha(A)/I_\ID, \DDD_A,\pp_A)$,
  together with the natural map $i_A\colon A\to \sha(a) \to
  \sha(A)/I_\ID$, is the free integro-differential algebra of weight
  $\lambda$ on $A$.
    \mlabel{thm:freeid}
\end{theorem}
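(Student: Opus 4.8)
The plan is to exploit the characterization of integro-differential algebras among differential Rota-Baxter algebras given in Theorem~\ref{pp:charintdiff}, together with the universal property of $\sha(A)$ established in Theorem~\ref{thm:commdiffrb}. The whole argument follows the standard ``quotient of a free object'' pattern, so the essential content lies in matching the generators of $I_\ID$ to the defining relation~(\ref{it:cheje}), which has already been shown equivalent to being an integro-differential algebra.

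First I would check that the quotient is an integro-differential algebra. Since $I_\ID$ is by construction a differential Rota-Baxter ideal, the operators $\D_A$ and $\p_A$ descend to well-defined operators $\DDD_A$ and $\pp_A$ on $\sha(A)/I_\ID$, which then remains a commutative differential Rota-Baxter algebra of weight $\lambda$, with induced projectors the images of $\J$ and $\E$. Writing $\pi$ for the quotient map and using that $\J,\E$ descend, for any $\bar x=\pi(x)$, $\bar y=\pi(y)$ one has $\bar\J(\bar\E(\bar x)\bar\J(\bar y))-\bar\E(\bar x)\bar\J(\bar y)=\pi\big(\J(\E(x)\J(y))-\E(x)\J(y)\big)=0$, the last equality holding because this element is a generator of $I_\ID$. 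By commutativity, $\bar\J(\bar x)\bar\E(\bar y)=\bar\E(\bar y)\bar\J(\bar x)$, so the companion relation $\bar\J(\bar\J(\bar x)\bar\E(\bar y))=\bar\J(\bar x)\bar\E(\bar y)$ follows from the same family of generators. Thus condition~(\ref{it:cheje}) of Theorem~\ref{pp:charintdiff} holds in the quotient, whence $(\sha(A)/I_\ID,\DDD_A,\pp_A)$ is an integro-differential algebra. The map $i_A=\pi\circ j_A$ is a differential algebra homomorphism since $j_A$ is one (Theorem~\ref{thm:commdiffrb}) and $\pi$ preserves $\D_A$.

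For the universal property, given an integro-differential algebra $(R,\DD,\pp)$ and a differential algebra homomorphism $f\colon(A,\D)\to(R,\DD)$, I would first note that $R$ is a differential Rota-Baxter algebra by Lemma~\ref{lem:intdiffRotaBaxter}, so by the freeness in Theorem~\ref{thm:commdiffrb} there is a unique differential Rota-Baxter algebra homomorphism $\tilde f\colon\sha(A)\to R$ with $\tilde f\circ j_A=f$. The key step is that $\tilde f$ factors through $I_\ID$. Since $\tilde f$ commutes with $\D$ and $\p$, it commutes with $\J=\p\circ\D$ and $\E=\id-\J$; hence $\tilde f$ sends each generator $\J(\E(x)\J(y))-\E(x)\J(y)$ to the corresponding element of $R$ built from its own initialization and evaluation, and this element is zero precisely because $R$ satisfies~(\ref{it:cheje}) by the implication~(\ref{it:cha})$\Rightarrow$(\ref{it:cheje}) of Theorem~\ref{pp:charintdiff}. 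Therefore $\ker\tilde f$ is a differential Rota-Baxter ideal containing all generators of $I_\ID$, hence contains $I_\ID$, and $\tilde f$ induces $\free f\colon\sha(A)/I_\ID\to R$. Being induced by a differential Rota-Baxter algebra homomorphism, $\free f$ commutes with $\DDD_A$ and $\pp_A$ and is therefore an integro-differential algebra homomorphism with $\free f\circ i_A=f$.

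Uniqueness follows by pulling back to the free differential Rota-Baxter algebra: any integro-differential algebra homomorphism $g$ with $g\circ i_A=f$ yields a differential Rota-Baxter algebra homomorphism $g\circ\pi$ with $(g\circ\pi)\circ j_A=f$, which must equal $\tilde f$ by the uniqueness in Theorem~\ref{thm:commdiffrb}; since $\pi$ is surjective, $g=\free f$. The main obstacle I anticipate is bookkeeping rather than conceptual: confirming that the operators genuinely descend to the quotient (guaranteed once $I_\ID$ is taken to be the differential Rota-Baxter ideal generated by the relations, so that it is stable under $\D_A$ and $\p_A$) and checking that, in the commutative setting, the single displayed family of generators suffices to force both equalities of~(\ref{it:cheje}) in the quotient and to be killed by every $\tilde f$.
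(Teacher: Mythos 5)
Your proposal is correct and follows essentially the same route as the paper: reduce to the universal property of the free differential Rota--Baxter algebra $\sha(A)$, observe that $\tilde f$ kills the generators of $I_\ID$ because $R$ satisfies condition~(\ref{it:cheje}) of Theorem~\ref{pp:charintdiff}, and deduce uniqueness by pulling back along the surjection $\pi$. You are in fact somewhat more explicit than the paper, which leaves the verification that the quotient itself is an integro-differential algebra (via the generators forcing~(\ref{it:cheje}), with commutativity supplying the companion identity) as an implicit consequence of Theorem~\ref{pp:charintdiff}.
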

\begin{proof}
  Let a $\lambda$-integro-differential algebra $(R, \DD, \pp)$ be
  given. Then by Theorem~\mref{pp:charintdiff}, $(R, \DD, \pp)$ is
  also a $\lambda$-differential Rota-Baxter algebra. Thus by
  Theorem~\mref{thm:commdiffrb}, there is a unique homomorphism $
  \tilde{f}\colon \sha(A)\to R $ such that the left triangle of the
  following diagram commutes.
  \begin{equation*}
    \xymatrix{
      & (\sha(A), \D_A, \p_A)\ar^{\pi}[rd] \ar^{\tilde{f}}[dd] & \\
      (A, \D) \ar^{j_A}[ur] \ar^{f}[dr] &&
      (\sha(A)/I_\ID, \DDD_A,\pp_A) \ar_{\free{f}}[dl] \\
      & (R, \DD, \pp) & }
  \end{equation*}
  Since $(R, \DD, \pp)$ is a $\lambda$-integro-differential algebra,
  $\tilde{f}$ factors through $\sha(A)/I_\ID$ and induces the
  $\lambda$-integro-differential algebra homomorphism $\free{f}$ such
  that the right triangle commutes. Since $i_A = \pi \circ j_A$, we
  have $\free{f}\circ i_A=f$ as needed.

  Suppose $\free{f}_1: \sha(A)/I_\ID \to R$ is also a
  $\lambda$-integro-differential algebra homomorphism such that
  $\free{f}_1\circ i_A=f$. Define $\tilde{f}_1=\free{f}_1\circ \pi$.
  Then $\tilde{f}_1\circ j_A=f$. Thus by the universal property of
  $\sha(A)$, we have $\tilde{f}_1=\tilde{f}$. Since $\pi$ is
  surjective, we must have $\free{f}_1=\free{f}.$ This completes the
  proof.
\end{proof}

\section{Construction of free commutative integro-differential algebras}
\mlabel{sec:fida} As mentioned in Section~\ref{s:intro}, in integro-differential
algebras the relation between~$\D$ and~$\pp$ is more intimate than in
differential Rota-Baxter algebras. This makes the construction of their free
objects more complex. Having ensured their existence in
(Section~\ref{ss:fida}), we introduce a vast class of differential algebras
for which our construction applies (Section~\ref{ss:reg-diffalg}). Next we
present the details of the construction and some basic properties
(Section~\ref{ss:freec}), leading on to the proof that it yields the desired
free object (Section~\ref{ss:intdifpf}). The construction applies in
particular to rings of differential polynomials~$\bfk \diffa{u}$, yielding the
free object over one generator, and to the ring of rational functions
(Section~\ref{ss:example}).

\subsection{Regular differential algebras}
\mlabel{ss:reg-diffalg}

A free commutative integro-differential algebra can be regarded as a universal
way of constructing an integro-differential algebra from a differential
algebra. The easiest way of obtaining an integro-differential algebra from a
differential algebra occurs when~$(A, \D)$ already has an integral
operator~$\pp$. This means in particular that~$\D \circ \pp = \id_A$ so that the
derivation~$\D$ must be surjective. But often this will not be the case, for
example when~$A = \bfk\diffa{u}$ is the ring of differential polynomials
(where~$u$ is clearly not in the image of~$\D$). But even if we cannot define an
antiderivative (meaning a right inverse for~$\D$) on all of~$A$, we may still be
able to define one on~$\D(A)$ using an appropriate
\emph{quasi-antiderivative}~$\q$. This means we require~$\D(\q(y)) = y$ for~$y
\in \D(A)$ or equivalently $\D(\q(\D(x)))=\D(x)$ for $x\in A$. For a general
operator~$\D$, an operator~$\q$ with this property is called an inner inverse
of~$\D$. It exists for many important differential algebras, in particular for
differential polynomials (Proposition~\mref{pp:diffpoly}) and rational function
(Proposition~\mref{pp:rational}).

Before coming back to differential algebras, we recall some properties
of generalized inverses for linear maps on $\bfk$-modules; for further
details and references see~\cite[Section 8.1.]{NV}.

\begin{defn}
Let $L\colon M \to N$ be a linear map between $\bfk$-modules. \begin{enumerate}
\item
If a linear map $\bar{L}\colon N\to M$ satisfies $L\circ \bar{L}\circ L=L$, then $\bar{L}$ is called an \emph{inner inverse} of~$L$.
\item
If $L$ has an inner inverse, then $L$ is called \emph{regular}.
\item
If a linear map $\bar{L}\colon N\to M$ satisfies $\bar{L}\circ L \circ \bar{L}=\bar{L}$, then $\bar{L}$ is called an \emph{outer inverse} of $L$.
\item If $\bar{L}$ is an inner inverse and outer inverse of $L$, then $\bar{L}$
  is called a \emph{quasi-inverse} or \emph{generalized inverse} of $L$.
\end{enumerate}
\end{defn}

\begin{prop}
  Let $L\colon M \to N$ be a linear map between~$\bfk$-modules.
\begin{enumerate}
\item
If~$L$ has an inner inverse~$\bar{L}\colon N \to M$, then~$\s = L \circ
  \bar{L}\colon N \to N$ is a projector onto~$\im{L}$ and~$\E = \id_M -
  \bar{L} \circ L\colon M \to M$ is a projector onto~$\ker{L}$.
\label{it:comp}
\item
Given projectors $\s\colon N\to N$ onto $\im L$ and $\E\colon M\to M$ onto $\ker L$, there is a unique quasi-inverse $\bar{L}$ of $L$ such that $\im \bar{L}=\ker \E$ and $\ker \bar{L}=\ker \s$.
Thus a regular map has a quasi-inverse.
\label{it:qi}
\end{enumerate}
\label{pp:reg}
\end{prop}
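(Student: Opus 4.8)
The plan is to treat the two parts separately, deriving part (a) directly from the defining identity $L\circ\bar L\circ L = L$ and building part (b) on a direct-sum decomposition supplied by the two projectors.

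First, for part (a), I would check idempotency and image by formal manipulation. One has $\s^2 = L\bar L L\bar L = (L\bar L L)\bar L = L\bar L = \s$, and setting $P := \bar L L$ gives $P^2 = \bar L(L\bar L L) = \bar L L = P$, so $\E = \id_M - P$ is idempotent as well. To pin down the images I would show that $\s$ fixes $\im L$ pointwise---for $y = L(x)$ we get $\s(y) = L\bar L L(x) = L(x) = y$---while trivially $\im \s \subseteq \im L$; hence $\im \s = \im L$. Dually, $\E$ fixes $\ker L$ pointwise, and $L\circ\E = L - L\bar L L = 0$ shows $\im\E\subseteq\ker L$, so $\im\E = \ker L$.

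For part (b), the crux is to realize $L$ as an isomorphism between complementary pieces. Since $\E$ is a projector onto $\ker L$, I have $M = \ker L\oplus\ker\E$, and since $\s$ is a projector onto $\im L$, I have $N = \im L\oplus\ker\s$. The hard part---and really the only substantive step---is to verify that $\phi := L|_{\ker\E}\colon \ker\E\to\im L$ is an isomorphism: injectivity because $\ker L = \im\E$ meets $\ker\E$ trivially, and surjectivity because every $L(x)$ equals $L(x-\E(x))$ with $x-\E(x)\in\ker\E$. Given this, I would simply \emph{define} $\bar L := \phi^{-1}\circ\s$, which by construction kills $\ker\s$ and has image $\phi^{-1}(\im L) = \ker\E$, matching the prescribed kernel and image.

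Finally I would verify the two generalized-inverse identities and uniqueness, each a one-line check once $\phi$ is in hand. For $L\bar L L = L$: applying $\s$ to $L(x)\in\im L$ is the identity, so $\bar L L(x) = \phi^{-1}(L(x))$, and $L = \phi$ on $\ker\E$ returns $L(x)$. For $\bar L L\bar L = \bar L$: from $L\bar L(y) = \phi\phi^{-1}(\s y) = \s y$ one gets $\bar L L\bar L(y) = \phi^{-1}(\s^2 y) = \phi^{-1}(\s y) = \bar L(y)$. For uniqueness, any competitor $\bar L'$ with $\ker\bar L' = \ker\s$ and $\im\bar L' = \ker\E$ vanishes on $\ker\s$, while on $\im L$ writing $y = L(x)$ with $x\in\ker\E$ and using $L\bar L' L = L$ gives $L\bar L'(y) = y$, so the injective $\phi$ forces $\bar L'(y) = \phi^{-1}(y) = \bar L(y)$; since $N = \im L\oplus\ker\s$, we conclude $\bar L' = \bar L$. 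I expect the only real obstacle to be bookkeeping: keeping the decompositions of $M$ and $N$ aligned so that $\phi$ and its inverse are unambiguous.
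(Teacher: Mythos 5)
Your proof is correct and follows essentially the same route as the paper: both parts rest on the decompositions $M=\ker L\oplus\ker\E$ and $N=\im L\oplus\ker\s$, with $\bar L$ defined as the inverse of the induced bijection $L\colon\ker\E\to\im L$ on $\im L$ and as zero on $\ker\s$. You merely write out the verifications that the paper dismisses as ``immediate'' or ``checked directly,'' and they all hold.
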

\begin{proof}
(\mref{it:comp}) This statement is immediate.
\smallskip

\noindent
(\mref{it:qi})
If $L$ is regular, then by Item~(\mref{it:comp}), there are submodules $\ker \E\subseteq M$ and $\ker \s\subseteq N$ such that
$$M=\ker L \oplus \ker \E, \quad N=\im L \oplus \ker \s.$$
Thus $L$ induces a bijection $L\colon \ker \E \to \im L$. Define $\bar{L}\colon
N\to M$ to be the inverse of this bijection on $\im L$ and to be zero on $\ker
\s$, then we check directly that $\bar{L}$ is a quasi-inverse of $L$ and the
unique one such that $\im \bar{L}=\ker \E$ and $\ker \bar{L}=\ker \s$. See
also~\cite[Theorem 8.1.]{NV}.
\end{proof}

For a quasi-inverse~$\bar{L}$ of~$L$
we note the direct sums
\begin{equation*}
  M= \im
  {\bar{L}} \oplus \ker {L} \quad\text{and} \quad N= \im {L} \oplus \ker {\bar{L}} .
\end{equation*}
Moreover, let
\[
\J = \id_M - \E \quad \text{and} \quad \T = \id_N - \s,
\]
then we have the relations
\begin{alignat*}{3}
  M_E&:=\im{\E}= \ker {L}=\ker J, \quad& &
  M_J:=\im{\J}= \im {\bar{L}} =\ker E \\
  N_S&:=\im{\s}= \im{L} =\ker T, \quad & &
  N_T:=\im{\T}= \ker {\bar{L}}=\ker S.
\end{alignat*}
for the corresponding projectors.

The intuitive roles of the projectors~$\E$ and~$\J$ are similar as in
Section~\ref{ssec:intdiffalgs}, except that the ``evaluation''~$\E$ is not
necessarily multiplicative and the image of the ``initialization''~$\J$ need not
be an ideal. The projector~$\s$ may be understood as extracting the solvable
part of~$N$, in the sense of solving~$L(x) = y$ for~$x$, as much as possible for
a given $y \in
N$.

Let us elaborate on this. Writing respectively $y_\s = \s(y)$ and $y_\T = \T(y)$
for the ``solvable'' and ``transcendental'' part of $y$, the equation $L(x) =
y_\s$ is clearly solved by $x^* = \bar{L} (y_\s)$ while $L(x) = y_\T$ is only
solvable in the trivial case $y_\T = 0$. So the identity $L(x^*) = y - \T(y)$
may be understood in the sense that $x^*$ solves $L(x) = y$ except for the
transcendental part. We illustrate this in the following example.

\begin{exam}
  \mlabel{ex:ratfun} Consider the field~$\CC(x)$ of \emph{complex rational
    functions} with its usual derivation~$\D$. We take $\D$ to be the linear map
  $L\colon M\to N$ where $M=N=\CC(x)$. Any rational function can be represented
  by~$f/g$ with a monic denominator~$g = (x-\alpha_1)^{n_1} \cdots
  (x-\alpha_k)^{n_k}$ having distinct roots~$\alpha_i \in \CC$.  By partial
  fraction decomposition, it can be written uniquely as
  \begin{equation}
    r + \sum_{i=1}^k \sum_{j=1}^{n_i}
    \frac{\gamma_{ij}}{(x-\alpha_i)^j},
    \mlabel{eq:pfdecomp}
  \end{equation}
  where $r\in \CC[x]$ and $\gamma_{ij} \in \CC$.  Then for the domain $\CC(x)$
  of $d$, we have the decomposition
$$\CC(x)  = \ker d \oplus \CC(x)_J$$
with $\ker d = \CC$ and
$$\CC(x)_J = \left \{     r + \sum_{i=1}^k \sum_{j=1}^{n_i}
  \frac{\gamma_{ij}}{(x-\alpha_i)^j}\,\Big|\, r\in x \, \CC[x], \alpha_i\in \CC
  \text{ distinct}, \gamma_{ij}\in \CC\right\}$$ as the initialized space. For
the range $\CC(x)$ of $d$, we have the decomposition
$$\CC(x)= \im d \oplus \CC(x)_T, $$
with
$$\im d = \left \{ r + \sum_{i=1}^k \sum_{j=2}^{n_i}
    \frac{\gamma_{ij}}{(x-\alpha_i)^j}\,\Big|\, r\in \CC[x], \alpha_i\in \CC \text{ distinct}, \gamma_{ij}\in \CC\right\}$$
and
$$ \CC(x)_T =\left \{ \sum_{i=1}^k \frac{\gamma_i}{x-\alpha_i}\,\Big|\,
  \alpha_i\in \CC \text{ distinct}, \gamma_i\in \CC\right\}$$
as the transcendental space.

By Proposition~\mref{pp:reg} there exists a unique quasi-inverse $Q\colon
\CC(x)\to \CC(x)$ of $\D$ corresponding to the above decompositions, which we
can describe explicitly.  On~$\im d$ we define $Q$ by setting~$Q(x^k) =
x^{k+1}/(k+1)$ for~$k \ge 0$ and~$Q(1/(x-\alpha)^j) = -j/(x-\alpha)^{j-1}$ for
$j>1$, and we extend it by zero on~$\CC(x)_T$. Analytically speaking, the
quasi-antiderivative $Q$ acts as~$\cum_0^x$ on the polynomials and as
$\cum_{-\infty}^x$ on the solvable rational functions: Since~$\CC(x)$ is not an
integro-differential algebra, it is not possible to use a single integral
operator. The associated codomain projector~$\s = d \circ \q$ extracts the
solvable part by filtering out the residues $1/(x-\alpha)$; their
antiderivatives would need logarithms, which are not available in~$\CC(x)$. The
domain projector~$\E = \id_{\CC(x)} - \q \circ d$ is almost like evaluation
at~$0$ but is not multiplicative according to
Proposition~\mref{prop:char-intdiffalg} since~$\CC(x)_J$ cannot be an ideal of
the field~$\CC(x)$. In fact, one checks immediately that~$\E(x \cdot 1/x) =
\E(1) = 1$ but~$\E(x) \cdot \E(1/x) = 0 \cdot 0 = 0$.

See Proposition~\mref{pp:rational} for the case when $\D$ here is replaced by
the difference operator or more generally the $\lambda$-difference quotient
operator $\D_\lambda$ with $\lambda\neq 0$ (Example~\mref{ex:diff}).
We refer to~\mcite{Bro} for details on effectively computing the above decomposition
into solvable and transcendental part of rational functions
in the context of symbolic integration algorithms. See also~\mcite{CS} for necessary and sufficient
conditions for the existence of telescopers in
the differential, difference, and $q$-difference case in terms of (generalizations) of residues.

\end{exam}

We can now define what makes a differential algebra such as~$A =
\bfk\diffa{u}$ and~$A = \CC(x)$ adequate for the forthcoming
construction of the free integro-differential algebra.

\begin{defn} Let~$(A,\D)$ be a differential algebra of weight $\lambda$ with derivation~$\D\colon A\to A$.
\begin{enumerate}
  \item If $\lambda=0$, then~$(A, \D)$ is called \emph{regular} if its
  derivation~$\D$ is a regular map. Then a quasi-inverse of $\D$ is called a \emph{quasi-antiderivative}.
\item If $\lambda\neq 0$, then~$(A,\D)$ is called \emph{regular} if its
  derivation~$\D$ is a regular map and the kernel of one of its quasi-inverses
  is a nonunitary $\bfk$-subalgebra of $A$. Such a quasi-inverse of $\D$ is
  called a \emph{quasi-antiderivative}.
\end{enumerate}
      \mlabel{def:reg-diffalg}
\end{defn}

We observe that the class of regular differential algebras is fairly
comprehensive in the zero weight case. It includes all differential algebras
over a field~$\bfk$ since in that case every subspace is complemented, so all
$\bfk$-linear maps are regular. In particular, all differential fields (viewed
as differential algebras over their field of constants) are regular. The
example~$\CC(x)$ is a case in point, but note that Example~\ref{ex:ratfun}
provides an explicit quasi-antiderivative rather than plain existence.

The situation is more complex in the nonzero weight case due to the extra
restriction on the derivation, which we need in our construction of free
integro-differential algebras. If~$\bfk$ is a field, the ring of differential
polynomials~$\bfk\diffa{u}$ is regular for any weight, and we will provide an
explicit quasi-antiderivative that works also when~$\bfk$ is a $\QQ$-algebra but
not a field (Proposition~\mref{pp:diffpoly}).  Moreover, the field of complex
rational functions $\CC(x)$ with its usual difference operator is a regular
differential ring of weight one, and this can be extended to arbitrary nonzero
weight (Proposition~\mref{pp:rational}).

\subsection{Construction of $\efid{A}$}
\mlabel{ss:freec}

According to Theorem~\ref{thm:freex}, the free integro-differential
algebra~$\fid{A}$ can be described by a suitable
quotient. However, for studying this object effectively, a more
explicit construction is preferable. We will achieve this, for a
regular differential algebra~$A$, by defining an
integro-differential algebra~$\efid{A}$, and by showing in the
next subsection that it satisfies the relevant universal
property. Hence we may take~$\efid{A}$ to be~$\fid{A}$.

\subsubsection{Definition of $\efid{A}$ and the statement of
  Theorem~\mref{thm:intdiffa}}

Let $(A, \D)$ be a regular differential algebra with
a fixed quasi-antiderivative~$\q$.

Denote
\begin{equation*}
A_\ci=\im Q\quad\text{and}\quad A_\ct=\ker Q.
\end{equation*}
Then we have the direct sums
\begin{equation*}
  A = A_\ci \oplus \ker{\D}
  \quad\text{and}\quad
  A = \im{\D} \oplus A_\ct
\end{equation*}
with the corresponding projectors~$\E = \id_A - \q \circ \D$ and~$\s = \D \circ
\q$, respectively. As before, we write~$\J = \id_A -\E=\q \circ \D$ and~$\T =
\id_A - \s$ for the complementary projectors. Furthermore, we use the
notation~$K := \ker \D \supseteq \bfk$ in this subsection.

We give now an explicit construction of $\efid{A}$ via tensor
products (all tensors are still over~$\bfk$). First let
\begin{equation*}
  \sha_\ct (A):= \bigoplus_{k\geq 0} A\ot A_\ct^{\otimes k} = A\oplus (A\ot A_\ct) \oplus (A\ot A_\ct^{\ot 2}) +\cdots
\end{equation*}
be the $\bfk$-submodule of $\sha(A)$ in Eq.~(\mref{eq:freerb}). Under our assumption that $A_\ct$ is a subalgebra of $A$ when $\lambda \neq 0$, $\sha_\ct(A)$  is clearly a $\bfk$-subalgebra of $\sha(A)$ under the multiplication in Eqs.~(\mref{eq:shpr0}) and (\mref{eq:shpr}). It is also closed under the derivation $\D_A$ defined in Eq.~(\mref{eq:diffa}).
Alternatively,
$$ \sha_\ct(A)= A\ot \sha^+(A_\ct)$$
is the tensor product algebra where $\sha^+(A_\ct):=\bigoplus\limits_{n\geq 0}
A_\ct^{\ot n}$ is the mixable shuffle algebra~\mcite{Gub,GK1,Ho} on the
$\bfk$-algebra $A_\ct$. In the case~$\lambda=0$, this is the plain shuffle
algebra, where it is sufficient for~$A_\ct$ to have the structure of a~$\bfk$-module.
So a pure tensor $\fraka$ of $A \ot \sha^+(A_\ct)$ is of the
form
\begin{equation} \fraka=a \ot \overline{\fraka}\in A\ot
  A_\ct^{\ot n}\subseteq A^{\ot (n+1)}.  \mlabel{eq:fraka}
\end{equation}
We then define the \emph{length} of $\fraka$ to be $n+1$.

Next let~$\ee\colon A \rightarrow A_\ee$ be an isomorphism
of $K$-algebras, where
\begin{equation}
  A_\ee:=\{\ee(a)\,|\, a\in A\}
  \mlabel{eq:ea}
\end{equation}
denotes a replica of the $K$-algebra $A$, endowed with the zero
derivation. We identify the image~$\ee(K) \subseteq A_\ee$ with~$K$ so that
$\ee(c) = c$ for all~$c \in K$. Finally let
\begin{equation}
  \efid{A}:=A_\ee\ot_K \sha_\ct(A)=A_\ee\ot_K A\ot \sha^+(A_\ct)
  \mlabel{eq:efid}
\end{equation}
denote the tensor product differential algebra of $A_\ee$ and $\sha_\ct(A)$, namely the tensor product algebra where the derivation (again
denoted by $\D_A$) is defined by the Leibniz rule.

\subsubsection{Definition of $\pp_A$}

We will define a linear operator $\pp_A$ on $\efid{A}$. First require that
$\pp_A$ is linear over $A_\ee$. Thus we just need to define $\pp_A(\fraka)$ for
a pure tensor $\fraka$ in $A\ot \sha^+(A_\ct)$. We will accomplish this by
induction on the length $n$ of $\fraka$.  When $n=1$, we have $\fraka = a \in
A$. Then we have
\begin{equation}
  a = \D(\q(a)) + \T(a) \quad\text{with}\quad \T(a) \in A_\ct
  \mlabel{eq:Pu0}
\end{equation}
and we define
\begin{equation}
  \pp_A(a) := \q(a) -\ee(\q(a)) + 1 \ot \T(a).
\mlabel{eq:pa}
\end{equation}
Assume $\pp_A(\fraka)$ has been defined for $\fraka$ of length
$n\geq 1$ and consider the case when $\fraka$ has length $n+1$. Then
$\fraka=a\ot \lbar{\fraka}$ where $a\in A, \lbar{\fraka}\in
A_\ct^{\ot n}$ and we define
\begin{equation}
  \pp_A(a\ot \lbar{\fraka}):= \q(a) \ot \lbar{\fraka} -
  \pp_A(\q(a) \lbar{\fraka})
  -\lambda \, \pp_A(\D(\q(a)) \, \lbar{\fraka})+ 1 \ot \T(a) \ot
  \lbar{\fraka},
  \mlabel{eq:Pu}
\end{equation}
where the first and last terms are manifestly in $A\ot \sha^+(A_\ct)$ while the
middle terms are in $\efid{A}$ by the induction hypothesis. We
write~$\E_A=\id_{\efid{A}}-\pp_A\circ \D_A$ for what will turn out to be the
``evaluation'' corresponding to~$\pp_A$ (see the discussion before
Example~\mref{ex:ratfun}).

We display the following relationship between~$\pp_A$, $\p_A$ and~$\ee$ for
later application.
\begin{lemma}
  \begin{enumerate}
  \item\label{it:ppDD} For $a\in A$, we have $\E_A(a)=\ee(a)$.
  \item\label{it:ppp} For $\lbar\fraka \in \sha^+(A_\ct)$, we have
    $\pp_A(\lbar\fraka)=\p_A(\lbar\fraka)=1\ot \lbar\fraka$.
  \end{enumerate}\mlabel{lem:pf}
\end{lemma}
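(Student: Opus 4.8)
The plan is to unwind the recursive definition of~$\pp_A$ in Eqs.~\eqref{eq:pa} and~\eqref{eq:Pu}, using throughout that the quasi-antiderivative~$\q$ kills~$A_\ct = \ker\q$ and that~$\s = \D\circ\q$ is the projector onto~$\im\D$.

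For Item~(\ref{it:ppDD}), I would first note that for~$a \in A$, viewed inside~$\efid{A} = A_\ee\ot_K\sha_\ct(A)$, we have~$\D_A(a) = \D(a)$: the factor~$A_\ee$ carries the zero derivation, and on the length-one component of~$\sha_\ct(A)$ the operator~$\D_A$ reduces to~$\D$ by the convention accompanying Eq.~\eqref{eq:diffa}. Hence~$\E_A(a) = a - \pp_A(\D(a))$, and since~$\D(a)$ has length one I can apply Eq.~\eqref{eq:pa} directly, obtaining~$\pp_A(\D(a)) = \q(\D(a)) - \ee(\q(\D(a))) + 1\ot\T(\D(a))$. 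Two simplifications finish this term: first~$\q(\D(a)) = \J(a)$ by the definition~$\J = \q\circ\D$; second~$\T(\D(a)) = 0$, because~$\D(a)\in\im\D = \ker\T$ while~$\s$ is the projector onto~$\im\D$ by Proposition~\ref{pp:reg}(\ref{it:comp}). This yields~$\E_A(a) = a - \J(a) + \ee(\J(a)) = \E(a) + \ee(\J(a))$.

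The step I expect to require the most care is the final identification. Since~$\E = \id_A - \q\circ\D$ is the projector onto~$\ker\D$, we have~$\E(a)\in\ker\D = K$; the convention~$\ee|_K = \id_K$ together with the fact that~$\efid{A}$ is a tensor product \emph{over}~$K$ then lets one move this scalar across the tensor, giving~$\E(a) = \ee(\E(a))$ as elements of~$\efid{A}$. Combining with~$\E + \J = \id_A$, I conclude~$\E_A(a) = \ee(\E(a)) + \ee(\J(a)) = \ee(\E(a) + \J(a)) = \ee(a)$.

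For Item~(\ref{it:ppp}), the equality~$\p_A(\lbar\fraka) = 1\ot\lbar\fraka$ is immediate from the definition of~$\p_A$, so the real content is~$\pp_A(\lbar\fraka) = 1\ot\lbar\fraka$, which I would prove by induction on the length~$n$ of a pure tensor~$\lbar\fraka = a_1\ot\cdots\ot a_n\in A_\ct^{\ot n}$. In the base case~$n = 1$, Eq.~\eqref{eq:pa} applied to~$a = a_1\in A_\ct = \ker\q$ gives~$\q(a_1) = 0$ and~$\T(a_1) = a_1$ (since~$\s(a_1) = \D(\q(a_1)) = 0$), so~$\pp_A(a_1) = 1\ot a_1$. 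For~$n\geq 2$ I write~$\lbar\fraka = a\ot\lbar\frakb$ with~$a = a_1\in A_\ct$ and~$\lbar\frakb = a_2\ot\cdots\ot a_n$, and apply Eq.~\eqref{eq:Pu}. Because~$\q(a) = 0$, the first three terms vanish: $\q(a)\ot\lbar\frakb = 0$, while~$\pp_A(\q(a)\lbar\frakb)$ and~$\pp_A(\D(\q(a))\lbar\frakb)$ are both~$\pp_A(0) = 0$ by linearity of~$\pp_A$. Only the last term survives, equal to~$1\ot\T(a)\ot\lbar\frakb = 1\ot a_1\ot\lbar\frakb = 1\ot\lbar\fraka$. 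Thus the recursion collapses to its final term, and the induction hypothesis is in fact needed only through the triviality~$\pp_A(0)=0$.
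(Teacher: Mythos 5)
Your proof is correct and takes essentially the same route as the paper: part~(\ref{it:ppp}) is exactly the paper's observation that $\q(a)=0$ and $\T(a)=a$ for $a\in A_\ct$ collapse Eqs.~\eqref{eq:pa} and~\eqref{eq:Pu} to their final terms, and part~(\ref{it:ppDD}) rests on the same two facts the paper uses, namely $\T(\D(a))=0$ and $\ee|_K=\id_K$. The only cosmetic difference is that the paper treats the two summands of $A=A_\ci\oplus\ker\D$ separately, whereas you run a single computation for general $a$ and invoke the identification $\E(a)=\ee(\E(a))$ across the tensor product over $K$ at the end.
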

\begin{proof}
  (\ref{it:ppDD}) Using the direct sum~$A = A_\ci \oplus \ker{\D}$, we
  distinguish two cases. If~$a \in \ker{\D} = K$, then the left-hand side
  is~$a-\pp_A(\D_A(a)) =a- \pp_A(0) = a$; but the right-hand is $a$ as well
  since $\ee\colon A\to A_\ee$ is a $K$-algebra homomorphism. Hence assume~$a \in
  A_\ci = \im{\J}$. In that case~$a = \J(a) = \q(\D(a))$ and hence
  $\T(\D(a))=\D(a)-\D(\q(\D(a)))=0$. So~$\pp_A(\D_A(a)) = \pp_A(\D(a)) = a -
  \ee(a)$ by Eq.~(\mref{eq:pa}).

\noindent
(\ref{it:ppp}) This is a special case of Eqs.~(\mref{eq:Pu0})
    and~(\mref{eq:Pu}) with~$\q(a) = 0$ and~$\T(a) = a$ since~$a \in
    A_\ct$.
\end{proof}

\begin{theorem}
  Let~$(A, \D, \q)$ be a regular differential algebra of
  weight~$\lambda$ with quasi-anti\-derivative~$\q$. Then the triple $(\efid{A},
  \D_A, \pp_A)$, with the natural embedding
  $$i_A\colon
  A \to \efid{A}=A_\ee\ot_K A\ot \sha^+(A_\ct)$$
  to the
  second tensor factor, is the free commutative integro-differential
  algebra of weight $\lambda$ generated by $A$.
  \mlabel{thm:intdiffa}
\end{theorem}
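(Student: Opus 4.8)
The plan is to establish the two halves of the universal property in turn: first that the triple $(\efid{A},\D_A,\pp_A)$ is genuinely a $\lambda$-integro-differential algebra, and then that it has the required freeness. Throughout I would argue by induction on the length of pure tensors in $\efid{A}=A_\ee\ot_K A\ot \sha^+(A_\ct)$, exploiting the recursion \mref{eq:pa}--\mref{eq:Pu} that defines $\pp_A$.

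For the first half, the derivation axiom for $\D_A$ is immediate from its definition as the tensor-product derivation (zero on $A_\ee$, given by \mref{eq:diffa} on $\sha_\ct(A)$, glued by the Leibniz rule). The section axiom $\D_A\circ\pp_A=\id$ I would prove by induction on length: the base case $\fraka=a\in A$ follows by applying $\D_A$ to \mref{eq:pa} and using \mref{eq:Pu0} together with $\D_A|_{A_\ee}=0$, while the inductive step applies $\D_A$ to \mref{eq:Pu}, invokes the Leibniz rule, and cancels the middle terms via the induction hypothesis. Once $\D_A\pp_A=\id$ is in hand, $\J_A:=\pp_A\circ\D_A$ and $\E_A:=\id-\J_A$ are complementary projectors, and I would identify $\ker\D_A=\im\E_A$ with the first tensor factor $A_\ee$ (using that the only constants of $\D_A$ on $\sha_\ct(A)$ are those of $A$). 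Since $A_\ee$ is a subalgebra and $\pp_A$ was defined to be $A_\ee$-linear, the ``linearity over the constants'' clauses of Theorem~\mref{pp:charintdiff}(\mref{it:chepe}) hold automatically. Consequently, by Theorem~\mref{pp:charintdiff}, proving that $\efid{A}$ is integro-differential reduces to the single statement that $\E_A$ is multiplicative, equivalently (condition (\mref{it:chc})) that $\ker\E_A=\im\pp_A$ is an ideal.

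For the second half I would build $\free{f}$ from known universal properties. Given an integro-differential $(R,\DD,\pp)$ with evaluation $E_R=\id_R-\pp\DD$ and a differential homomorphism $f\colon(A,\D)\to(R,\DD)$, Lemma~\mref{lem:intdiffRotaBaxter} makes $R$ a differential Rota-Baxter algebra, so Theorem~\mref{thm:commdiffrb} yields a unique differential Rota-Baxter homomorphism $\tilde f\colon \sha(A)\to R$ extending $f$; I restrict it to the subalgebra $\sha_\ct(A)$ and extend $A_\ee$-linearly by $\free{f}(\ee(a')\ot z)=E_R(f(a'))\,\tilde f(z)$. This is a well-defined algebra homomorphism: $E_R$ is multiplicative (as $R$ is integro-differential), and the two tensor factors agree on $K$ because $E_R(f(c))=f(c)=\tilde f(c)$ for $c\in K=\ker\D$, so the map respects $\ot_K$. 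It commutes with the derivations (both factors do and $\D_A|_{A_\ee}=0$), and $\free{f}\circ i_A=f$ by construction. The remaining point---that $\free{f}$ commutes with the integral operators, $\free{f}\circ\pp_A=\pp\circ\free{f}$---is again an induction on length: the base case rewrites $\pp_A(a)$ via \mref{eq:pa}, uses $\pp(f(\D(\q(a))))=\pp\DD(f(\q(a)))=f(\q(a))-E_R(f(\q(a)))$, and recombines through \mref{eq:Pu0}; the inductive step feeds \mref{eq:Pu} into the hypothesis. Uniqueness then follows because $\efid{A}$ is generated as an integro-differential algebra by $i_A(A)$: the factor $A_\ee$ is $\E_A(A)$ by Lemma~\mref{lem:pf}(\mref{it:ppDD}) and the tensors in $\sha^+(A_\ct)$ are iterated $\pp_A$-images by Lemma~\mref{lem:pf}(\mref{it:ppp}), so any integro-differential homomorphism extending $f$ must coincide with $\free{f}$ on a generating set.

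I expect the main obstacle to be the two length inductions that carry the weight of the argument, namely the multiplicativity of $\E_A$ (equivalently $\im\pp_A$ being an ideal) in the first half and the intertwining $\free{f}\circ\pp_A=\pp\circ\free{f}$ in the second. Both must cope with the three-term recursion \mref{eq:Pu}, whose weighted middle summand $-\lambda\,\pp_A(\D(\q(a))\,\lbar{\fraka})$ couples the integral with the shuffle product of weight $\lambda$; checking that these terms reorganize correctly under multiplication (respectively under $\tilde f$) is where the regularity hypothesis on $A$---in particular that $A_\ct$ is a subalgebra when $\lambda\neq 0$---is essential, since it is what keeps all intermediate tensors inside $\sha_\ct(A)$.
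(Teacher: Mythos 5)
Your overall architecture matches the paper's: first show $(\efid{A},\D_A,\pp_A)$ is an integro-differential algebra by verifying the section axiom by induction on length and reducing the hybrid Rota--Baxter axiom to the multiplicativity of $\E_A$ via Theorem~\mref{pp:charintdiff}; then obtain $\free{f}$ from the universal property of $\sha(A)$ together with the $K$-algebra map $\ee(a)\mapsto f(a)-\pp(\DD(f(a)))$ on the factor $A_\ee$, and prove $\free{f}\circ\pp_A=\pp\circ\free{f}$ and uniqueness by induction on length. The second half of your outline is essentially the paper's argument, including the identification of the generators $\E_A(A)=A_\ee$ and the iterated $\pp_A$-images for uniqueness.

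However, there is a genuine gap in the first half: you reduce everything to the multiplicativity of $\E_A$ (equivalently, that $\ker\E_A=\im\pp_A$ is an ideal) and then explicitly defer exactly that statement as ``the main obstacle,'' anticipating a length induction that tracks how the three-term recursion~(\mref{eq:Pu}) reorganizes under multiplication. That is the crux of the theorem, and it is not supplied; moreover the route you sketch is not how the paper resolves it and would be substantially harder. The paper's key observation is Lemma~\mref{lem:ezero}: $\E_A$ annihilates every pure tensor $a\ot\lbar{\fraka}$ of length greater than one. This follows from a short telescoping computation, not an induction on products: expanding $\E_A(a\ot\lbar{\fraka})=a\ot\lbar{\fraka}-\pp_A(\D_A(a\ot\lbar{\fraka}))$ by the definitions of $\D_A$ and $\pp_A$, the terms collapse to $\E(a)\ot\lbar{\fraka}-\pp_A(\E(a)\lbar{\fraka})$, which vanishes because $\E(A)=K\subseteq A_\ee$, $\pp_A$ is $A_\ee$-linear, and $\pp_A(\lbar{\fraka})=1\ot\lbar{\fraka}$ by Lemma~\mref{lem:pf}. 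Multiplicativity of $\E_A$ is then immediate: if both factors have length one it reduces to the algebra isomorphism $\ee$ via Lemma~\mref{lem:pf}(\mref{it:ppDD}), and otherwise every pure tensor in the product has length greater than one, so both sides vanish. Without this (or an equivalent) argument, your proof that $\efid{A}$ carries an integro-differential structure is incomplete.
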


The proof of Theorem~\mref{thm:intdiffa} is given in Section~\mref{ss:intdifpf}.

Since $A_\ct \cong A/\im d$ as $\bfk$-modules, for different choices of $Q$, the
corresponding $A_\ct$ are isomorphic as $\bfk$-modules. Then for $\lambda=0$ the
mixable shuffle (i.e., shuffle) algebras $\sha^+(A_\ct)$ are isomorphic
$\bfk$-algebras since in that case the algebra structure of $A_\ct$ is not used;
see e.g.\@ Section~2.1 of~\cite{GX1}. When $\lambda\neq 0$, for $A_\ct$ from
different choices of $\q$, they are still isomorphic as $\bfk$-modules. But it
is not clear that they are isomorphic as nonunitary
$\bfk$-algebras. Nevertheless, the free commutative integro-differential
algebras derived by Theorem~\mref{thm:intdiffa} are isomorphic due to the
uniqueness of the free objects. See Remark~\mref{rk:subring} for further
discussions.

The following is a preliminary discussion on subalgebras as direct sum factors.

\begin{lemma}
  Let $\T$ and $\s$ be projectors on a unitary $\bfk$-algebra $R$
  such that $\T+\s = \id_{R}$. Then the following statements are
  equivalent:
  \begin{enumerate}
  \item $\im \T=\ker \s$ is a subalgebra;
  \item $\T(\T(x)\T(y))=\T(x)\T(y)$;
  \item $\s(xy)=\s(\s(x)y+x\s(y)-\s(x)\s(y))$.
  \end{enumerate}
\mlabel{lem:projsubalg}
\end{lemma}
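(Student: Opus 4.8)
The plan is to exploit that complementary projectors are detected by their fixed-point sets. Since $\T$ is a projector, $\im\T$ coincides with its set of fixed points, i.e.\ $z\in\im\T$ if and only if $\T(z)=z$; and because $\s=\id_R-\T$ is the complementary projector, $\im\T=\ker\s$, so equivalently $z\in\im\T$ if and only if $\s(z)=0$. I would record these two characterizations first, as they convert each of the three conditions into a statement about a single projector applied to a product.

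For the equivalence (a) $\Leftrightarrow$ (b): the submodule $\im\T$ is automatically a $\bfk$-submodule, being the image of a $\bfk$-linear map, so it is a subalgebra precisely when it is closed under multiplication, that is, when $\T(x)\T(y)\in\im\T$ for all $x,y\in R$. By the fixed-point characterization this membership holds for every $x,y$ if and only if $\T$ fixes $\T(x)\T(y)$, which is exactly the identity $\T(\T(x)\T(y))=\T(x)\T(y)$ of (b).

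For (b) $\Leftrightarrow$ (c): using $\s(z)=0 \Leftrightarrow \T(z)=z$, condition (b) is equivalent to $\s(\T(x)\T(y))=0$ for all $x,y$. I would then substitute $\T=\id_R-\s$, expand $\T(x)\T(y)=(x-\s(x))(y-\s(y))=xy-x\s(y)-\s(x)y+\s(x)\s(y)$, apply $\s$, and invoke $\bfk$-linearity of $\s$; collecting terms turns $\s(\T(x)\T(y))=0$ into $\s(xy)=\s\bigl(\s(x)y+x\s(y)-\s(x)\s(y)\bigr)$, which is (c).

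The computation is entirely routine, so there is no genuine obstacle; the only point requiring a little care is the reduction step—translating ``$\im\T$ is a subalgebra'' into an equation by way of the fixed-point description of $\im\T$ together with the identification $\im\T=\ker\s$—and keeping track that ``subalgebra'' here means merely a multiplicatively closed $\bfk$-submodule, with no unitality claimed. This matches the intended application, where $\T$ and $\s$ are the complementary projectors with $\im\T=\ker\s=A_\ct$, which is required only to be a nonunitary subalgebra when $\lambda\neq 0$.
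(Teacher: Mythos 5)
Your proof is correct and follows essentially the same route as the paper: the fixed-point characterization of $\im\T$ gives (a) $\Leftrightarrow$ (b), and the expansion $\s(\T(x)\T(y))=\s\bigl((x-\s(x))(y-\s(y))\bigr)$ is exactly the computation the paper uses to link to (c). The only cosmetic difference is that you prove (b) $\Leftrightarrow$ (c) directly as a reversible linear identity, while the paper splits it into (a) $\Rightarrow$ (c) and (c) $\Rightarrow$ (a); the content is the same.
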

\begin{proof}
  ((a) $\Leftrightarrow$ (b)) It is clear since $\T$ is a projector.

\noindent
((a) $\Rightarrow$ (c)) It follows from
\[
\s(\T(x)\T(y))=\s((x-\s(x))(y-\s(y))=0.
\]
\noindent
((c) $\Rightarrow$ (a)) Clearly, the identity implies that $\ker \s$ is a subalgebra.
\end{proof}

If $\s=\D \circ \q$ as above, we obtain from (c) an equivalent identity
\[
\q(xy)=\q(\D(\q(x))y+x\D(\q(y))-\D(\q(x))\D(\q(y)))
\]
in terms of $\q$ and $\D$, since  $\q\circ\D\circ\q=\q$.

\subsection{The proof of Theorem~\mref{thm:intdiffa}}
\mlabel{ss:intdifpf}

We will verify that $(\efid{A}, \D_A, \pp_A)$ is an
integro-differential algebra in Section~\mref{sss:idst} and verify its
universal property in Section~\mref{sss:idfree}.

\subsubsection{The integro-differential algebra structure on $\efid{A}$}
\mlabel{sss:idst}

Since $\D_A$ is clearly a derivation, by
Theorem~\mref{pp:charintdiff}(\mref{it:chb}), we just need to check the two
conditions
\begin{eqnarray}
  \D_A\circ \pp_A &=&\id_{\efid{A}},
  \mlabel{eq:dpid} \\
  \E_A(xy)&=&\E_A(x)\E_A(y), \quad x,y\in \efid{A}.
  \mlabel{eq:dpibp}
\end{eqnarray}
Since $A_\ee$ is in the kernel of $\D_A$ and in the ring of constants for
$\pp_A$, we just need to verify the equations for pure tensors $x=\fraka,
y=\frakb\in A\ot \sha^+(A_\ct)$.

We check Eq.~(\mref{eq:dpid}) by showing $(\D_A\circ
\pp_A)(\fraka)=\fraka$ for $\fraka\in A\ot
\sha^+(A_\ct)$ by induction on the length $n\geq 1$ of
$\fraka$. When $n=1$, we have $\fraka=a\in A$ and obtain
\begin{equation*}
  \D_A(\pp_A(a)) = \D_A(\q(a) -\ee(\q(a)) + 1 \ot \T(a))
  = \D(\q(a)) + \T(a) = a
\end{equation*}
by Eq.~(\mref{eq:Pu0}). Under the induction hypothesis, we consider
$\fraka=a \ot \lbar{\fraka}$ with $\lbar{\fraka}\in A_\ct^{\ot
  n}, n\geq 1$. Then we have
\begin{eqnarray*}
  \D_A(\pp_A(a\ot \lbar{\fraka}))&=&
  \D_A\big(\q(a) \ot \lbar{\fraka} -
  \pp_A(\q(a) \lbar{\fraka})
  -\lambda \, \pp_A(\D(\q(a)) \, \lbar{\fraka})+ 1 \ot \T(a) \ot
  \lbar{\fraka}\big)\\
  &=& \D(\q(a)) \ot \lbar{\fraka} + \q(a) \lbar{\fraka} +
  \lambda \, \D(\q(a)) \lbar{\fraka}
  - \q(a) \lbar{\fraka} - \lambda \, \D(\q(a)) \lbar{\fraka}
  + \T(a) \ot \lbar{\fraka}\\
  &=& \D(\q(a)) \ot \lbar{\fraka} + \T(a) \ot \lbar{\fraka}\\
  &=& a \ot \lbar{\fraka}
\end{eqnarray*}
by Eq.~(\mref{eq:Pu0}) again.

We next verify Eq.~(\mref{eq:dpibp}). If the length of both $x$ and $y$ are one, then $x$ and $y$ are in
$A$. Then by Lemma~\mref{lem:pf}(\mref{it:ppDD}), we have
\begin{equation*}
  E_A(xy)=\ee(xy)=\ee(x)\ee(y)=E_A(x)E_A(y) .
\end{equation*}
If at least one of $x$ or $y$ have length greater than one, then each pure
tensor in the expansion of $xy$ has
length greater than one. Then the equation holds by the following lemma.

\begin{lemma}
For any pure tensor $\fraka=a\ot \lbar{\fraka}\in A\ot \sha^+(A_\ct)$ of length greater than
one we have~$\E_A(\fraka) = 0$.
\mlabel{lem:ezero}
\end{lemma}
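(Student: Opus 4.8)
The plan is to establish the equivalent identity $\pp_A(\D_A(\fraka)) = \fraka$, since $\E_A = \id_{\efid{A}} - \pp_A \circ \D_A$; this gives $\E_A(\fraka) = 0$ directly. Write $\fraka = a \ot b \ot \lbar{\frakb}$ with $a \in A$, $b \in A_\ct$ and $\lbar{\frakb} \in A_\ct^{\ot(n-1)}$, so that $\lbar{\fraka} = b \ot \lbar{\frakb} \in A_\ct^{\ot n}$ with $n \ge 1$. The first step is to read off $\D_A(\fraka)$ from Eq.~\eqref{eq:diffa}:
\[
\D_A(\fraka) = \D(a) \ot \lbar{\fraka} + \big(ab + \lambda\, \D(a) b\big) \ot \lbar{\frakb},
\]
a sum of a length-$(n+1)$ and a length-$n$ pure tensor.

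Next I would apply $\pp_A$ and expand the leading term $\pp_A(\D(a) \ot \lbar{\fraka})$ through its defining recursion Eq.~\eqref{eq:Pu} with $\D(a)$ in the role of $a$. The simplifications all come from the quasi-antiderivative projectors: since $\D(a) \in \im \D = \im \s$ we get $\s(\D(a)) = \D(a)$, hence $\D(\q(\D(a))) = \D(a)$ and $\T(\D(a)) = 0$, while $\q(\D(a)) = \J(a)$. Thus the closing term $1 \ot \T(\D(a)) \ot \lbar{\fraka}$ of Eq.~\eqref{eq:Pu} vanishes, and evaluating the shuffle products via Eq.~\eqref{eq:shpr0} (both left factors have length one) I would obtain
\[
\pp_A(\D(a) \ot \lbar{\fraka}) = \J(a) \ot \lbar{\fraka} - \pp_A\big((\J(a) b) \ot \lbar{\frakb}\big) - \lambda\, \pp_A\big((\D(a) b) \ot \lbar{\frakb}\big).
\]

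Now I would gather the three length-$n$ contributions. They share the tail $\lbar{\frakb}$, so by linearity of $\pp_A$ they combine into $\pp_A\big((\E(a) b) \ot \lbar{\frakb}\big)$: the two $\lambda\, \D(a) b$ terms cancel and $ab - \J(a) b = \E(a) b$ because $\E = \id_A - \J$. The decisive point is that $\E(a) \in \ker \D = K$. Since $\pp_A$ is linear over $A_\ee$, and $K$ is identified with a subalgebra of $A_\ee$, I may pull $\E(a)$ out as a scalar and then invoke Lemma~\mref{lem:pf}(\mref{it:ppp}) to get $\pp_A(\lbar{\fraka}) = 1 \ot \lbar{\fraka}$, so that $\pp_A\big((\E(a) b) \ot \lbar{\frakb}\big) = \E(a) \ot \lbar{\fraka}$. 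Adding back the surviving leading term $\J(a) \ot \lbar{\fraka}$ and using $\J(a) + \E(a) = a$ collapses everything to $a \ot \lbar{\fraka} = \fraka$, which is exactly what is needed.

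I expect this computation to close in one pass instead of requiring an induction on the length; the case $n = 1$ (empty $\lbar{\frakb}$) falls under the same bookkeeping, with the shuffle products degenerating to products in $A$. The main obstacle is therefore not conceptual but clerical: correctly reducing the shuffle products $\q(\D(a))\,\lbar{\fraka}$ and $\D(\q(\D(a)))\,\lbar{\fraka}$ through Eq.~\eqref{eq:shpr0}, and, crucially, recognizing that the one term threatening the cancellation, $(\E(a) b) \ot \lbar{\frakb}$, is in fact inert precisely because $\E(a)$ is a constant of $\D$ --- so that the $A_\ee$-linearity of $\pp_A$ combined with Lemma~\mref{lem:pf}(\mref{it:ppp}) closes the argument.
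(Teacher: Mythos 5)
Your computation is correct and follows essentially the same route as the paper: expand $\D_A(\fraka)$ via Eq.~\eqref{eq:diffa}, unfold $\pp_A(\D(a)\ot\lbar{\fraka})$ through Eq.~\eqref{eq:Pu}, cancel the $\lambda$-terms using $\D\circ\q\circ\D=\D$ and kill the tail term via $\T(\D(a))=0$, and finish by noting $\E(a)\in K$ so that $A_\ee$-linearity of $\pp_A$ together with Lemma~\mref{lem:pf}(\mref{it:ppp}) collapses the remainder. The only cosmetic difference is that you verify $\pp_A(\D_A(\fraka))=\fraka$ directly while the paper keeps the subtraction and ends with $\E(a)\bigl(1\ot\lbar{\fraka}-\pp_A(\lbar{\fraka})\bigr)=0$; neither argument needs induction.
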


\begin{remark}
{\rm
Combining Lemma~\mref{lem:pf}(\mref{it:ppDD}) and Lemma~\mref{lem:ezero} we have $\im \E_A = A_\ee$.
Further, by Eq.~(\mref{eq:kerdimI}), we have
$\ker \D_A = \im \E_A=A_\ee.$
}
\end{remark}

\begin{proof}
For a given $\fraka=a\ot \lbar{\fraka}$ of length greater than one, we compute
{\allowdisplaybreaks
\begin{eqnarray*}
&& \E_A(a\ot \lbar{\fraka}) \\
  &=&a\ot \lbar{\fraka} -\pp_A (\D_A(a\ot
  \lbar{\fraka})) \quad \text{ (by definition of } \E_A)\\
  &=&
  a\ot \lbar{\fraka} - \pp_A(\D(a)\ot
  \lbar{\fraka})-\pp_A(a\lbar{\fraka})-\pp_A (\lambda
  \D(a)\lbar{\fraka}) \quad \text{ (by definition of } \D_A)\\
  &=&
  a\ot \lbar{\fraka} - \q(\D(a)) \ot \lbar{\fraka} +
  \pp_A(\q(\D(a)) \lbar{\fraka})
  +\lambda \, \pp_A(\D(\q(\D(a))) \, \lbar{\fraka})- 1 \ot \T(\D(a)) \ot \lbar{\fraka} \\
&&  -\pp_A(a\lbar{\fraka})-\pp_A (\lambda
  \D(a)\lbar{\fraka}) \quad \text{ (by definition of } \pp_A)\\
  &=& a \ot \lbar\fraka - \q(\D(a)) \ot \lbar\fraka
  + \pp_A(\q(\D(a)) \lbar\fraka) - \pp_A(a \lbar\fraka)
  \quad (\text{by } \D\circ \q\circ \D=\D \text{ and } \T(\D(a))=0)\\
  &=& \E(a) \ot \lbar\fraka - \pp_A( \E(a)
  \lbar\fraka) \quad \text{(by definition of } \E=\id_A - \q\circ \D).
\end{eqnarray*}}
Since $\E(A) = K\subseteq A_\ee$ and $\pp_A$ is taken to be $A_\ee$-linear, from Lemma~\ref{lem:pf}.(\ref{it:ppp}), we obtain
$$ \E_A(a\ot \lbar{\fraka})
=\E(a) (1_A \ot \lbar\fraka - \pp_A(\lbar\fraka))
= 0.$$
\end{proof}

\subsubsection{The universal property}
\mlabel{sss:idfree}
We now verify the universal property of $(\efid{A}, \D_A,
\pp_A)$ as the free integro-differential algebra on $(A,
\D)$: Let $i_A\colon A \to \efid{A}$ be the natural
embedding of $A$ into the the second tensor factor of
$\efid{A}=A_\ee\ot_K A\ot \sha^+(A_\ct)$. Then for any
integro-differential algebra $(R, \DD, \pp)$ and any differential
algebra homomorphism $f\colon (A, \D) \to (R, \DD)$, there is a unique integro-differential algebra homomorphism $\free{f}\colon (\efid{A}, \D_A, \pp_A)\to (R, \DD,
\pp)$ such that $\free{f} \circ i_A = f$. \smallskip

\noindent {\bf The existence of $\free{f}$:} Let a differential
algebra homomorphism $f\colon (A, \D)\to (R, \DD)$ be
given. Note that $f$ is in fact a $K$-algebra homomorphism where the $K$-algebra structure on $R$ is given by $f:K\to R$.
Since $(R,\pp)$ is a commutative Rota-Baxter algebra, by the universal
property of $\sha(A)$ as the free commutative Rota-Baxter algebra
on the commutative algebra $A$, there is a homomorphism
$\tilde{f}\colon (\sha(A),\p_A) \to (R,\pp)$ of commutative
Rota-Baxter algebras such that $\tilde{f}\circ j_A= f$ where $j_A\colon A\to \sha(A)$ is the embedding into the first tensor factor. This means that $\tilde{f}$ is an $A$-algebra homomorphism and, in particular, a $K$-algebra homomorphism. Thus $\tilde{f}$ restricts to a $K$-algebra homomorphism
\begin{equation*}
  \tilde{f}\colon A \ot \sha^+(A_\ct) \to R.
\end{equation*}
Further, $f$ also gives a $K$-algebra homomorphism
\begin{equation*}
  f_\ee\colon A_\ee \to R, \ee(a)\mapsto f(a)-\pp(\DD(f(a))).
\end{equation*}
Thus we get an algebra homomorphism on the tensor product over $K$:
$$\free{f}:=f_\ee \ot_K \tilde{f}\colon A_\ee \ot_K (A \ot \sha^+(A_\ct)) \to R$$
that extends $\tilde{f}$ and $f_\ee$. Further, we have $\free{f}\circ j_A =f.$

It remains to check the equations
\begin{equation}
\free{f}\circ \D_A=\DD\circ \free{f}, \quad
\free{f}\circ \pp_A=\pp\circ \free{f}.
\mlabel{eq:freer}
\end{equation}
Since $A_\ee$ is in the kernel of $\D_A$ and in the ring of
constants of $\pp_A$, we only need to verify the equations when restricted to $A\ot\sha^+(A_\ct)$.

Fix $a\ot \lbar{\fraka}=a(1\ot \lbar{\fraka})\in A\ot \sha^+(A_\ct)$. By
Lemma~\mref{lem:pf}(\mref{it:ppp}), we have
$$\pp(\free{f}(\lbar{\fraka}))=\pp(\tilde{f}(\lbar{\fraka})) =\tilde{f}(\pp_A(\lbar{\fraka}))=\free{f}(1\ot \lbar{\fraka}).$$
Thus we obtain
{\allowdisplaybreaks
\begin{eqnarray*}
  \free{f}(\D_A(a\ot \lbar{\fraka}))&=&
  \free{f}(\D(a)\ot \lbar{\fraka})+\free{f}(a\lbar{\fraka})
  +\free{f}(\lambda \D(a)\lbar{\fraka}) \\
  &=& f(\D(a))\free{f}(1\ot\lbar{\fraka})
  +f(a)\free{f}(\lbar{\fraka}) + \lambda
  f(\D(a))\free{f}(\lbar{\fraka})\\
  &=& \DD(f(a)) \free{f}(1\ot\lbar{\fraka}) + f(a)
  \DD(\pp(\free{f}(\lbar{\fraka}))) + \lambda \DD(f(a))
  \DD(\pp(\free{f}(\lbar{\fraka})))
  \\&=& \DD(f(a)) \free{f}(1\ot\lbar{\fraka}) + f(a) \DD(\free{f}(1\ot
  \lbar{\fraka})) + \lambda \DD(f(a)) \DD(\free{f}(1\ot \lbar{\fraka}))\\
  &=& \DD(f(a)\free{f}(1\ot \lbar{\fraka}))\\
  &=& \DD(\free{f}(a \ot \lbar{\fraka})).
\end{eqnarray*}
}
This proves the first equation in Eq.~(\ref{eq:freer}).
We next prove the second equation by induction on the length $k\geq 1$ of $\fraka:=a\ot \lbar{\fraka}
\in A\ot \sha^+(A_\ct)$.  When $k=1$, we have $\fraka=a\in
A$ and
\begin{eqnarray*}
  \free{f}(\pp_A (a))&=& \free{f} \left( \q(a) - \ee(\q(a))+1\ot \T(a)\right) \\
  &=& f(\q(a)) -f(\q(a))+\pp(\DD(f(\q(a))))+\pp(f(\T(a)))\\
  &=& \pp(f(\D(\q(a))+\T(a)))\\
  &=& \pp(f(a)),
\end{eqnarray*}
using Lemma~\mref{lem:pf}(\mref{it:ppDD}) and~(\mref{it:ppp}). Assume now that
the claim has been proved for $k=n\geq 1$ and consider $\fraka=a\ot
\lbar{\fraka}$ with length $n+1$. Then we have
\begin{eqnarray*}
  \free{f}(\pp_A(a\ot \lbar{\fraka})) &=&
  \free{f}\left(\q(a)\ot\lbar{\fraka}
    -\pp_A(\q(a)\lbar{\fraka}) - \lambda \,
    \pp_A(\D(\q(a))\lbar{\fraka}) +1\ot
    \T(a)\ot\lbar{\fraka}\right)\\
  &=&
  \free{f}(\q(a))\free{f}(\pp_A(\lbar{\fraka})) -
  \free{f}(\pp_A(\q(a)\lbar{\fraka}))\\
&&  -\lambda\free{f}(\pp_A(\D(\q(a))\lbar{\fraka}))
  +\free{f}(\p_A(\T(a)\ot\lbar{\fraka})).
\end{eqnarray*}
Here we have applied Lemma~\mref{lem:pf}(\mref{it:ppp}) in the last
term. Applying the induction hypothesis to the first three terms and using the
fact that the restriction $\tilde{f}$ of $\free{f}$ to $A \ot \sha^+(A_\ct)$
is compatible with the Rota-Baxter
operators in the last term, we obtain
\begin{eqnarray*}
  \free{f}(\pp_A(a\ot \lbar{\fraka})) &=&
  f(\q(a))\pp(\free{f}(\lbar{\fraka}))
  -\pp(\free{f}(\q(a)\lbar{\fraka})) -\lambda
  \pp(\free{f}(\D(\q(a))\lbar{\fraka})) +\pp(\free{f}(\T(a)\ot
  \lbar{\fraka}))\\
  &=& \pp\left (\DD(f(\q(a)))\pp(\free{f}(\lbar{\fraka}))\right) +
  \pp\left (f(\T(a))\free{f}(\p_A(\lbar{\fraka}))\right),
\end{eqnarray*}
where we have used integration by parts in
Theorem~\mref{pp:charintdiff}(\ref{it:intpart}) in the last step.
On the other hand, we have
\begin{eqnarray*}
  \pp(\free{f}(a\ot \lbar{\fraka}))&=&
  \pp(f(a)\free{f}(\p_A(\lbar{\fraka})))
  \\&=&
  \pp\left (f(\D(\q(a))+\T(a))\free{f}(\p_A(\lbar{\fraka}))\right)
  \\&=&
  \pp\left (\DD(f(\q(a)))\pp(\free{f}(\lbar{\fraka}))\right) +
  \pp\left (f(\T(a))\free{f}(\p_A(\lbar{\fraka}))\right).
\end{eqnarray*}
Thus we have completed the proof of
the existence of the integro-differential algebra homomorphism
$\free{f}$.  \smallskip

\noindent
{\bf The uniqueness of $\free{f}$:}
Suppose $\free{f}_1\colon \efid{A} \to R$ is a homomorphism of
integro-differential algebras such that $\free{f}_1\circ i_A=f$.
For $1\ot a_1\ot \cdots\ot a_n \in \sha^+(A_\ct)$, we have
\begin{eqnarray*}
  \free{f}_1(1\ot a_1\ot \cdots \ot a_n)&=&
  \free{f}_1\left (\pp_A (a_1\pp_A (\cdots \pp_A
    (a_n)\cdots ))\right) \\
  &=&
  \pp(f(a_1)\pp(\cdots \pp(f(a_n))\cdots)) \\
  &=&
  \free{f} \left (\pp_A (a_1\pp_A( \cdots \pp_A (a_n)\cdots))\right)\\
  &=&
  \free{f}(1\ot a_1\ot\cdots\ot a_n).
\end{eqnarray*}
Thus the restrictions of $\free{f}$ and $\free{f}_1$ to $A \ot \sha^+(A_\ct)$
are the same. Further, by Lemma~\mref{lem:pf}(\mref{it:ppDD}),
\begin{equation*}
  \free{f}_1(\ee(a)) =f(a)-\free{f}_1(\pp_A(\D_A(a)))
  =f(a)-\pp(\DD(f(a))=\free{f}(\ee(a)).
\end{equation*}
Hence the restrictions of $\free{f}$ and $\free{f}_1$
to $A_\ee$ are also the same. As these restrictions to $A\ot \sha^+(A_\ct)$ and $A_\ee$ are $K$-homomorphisms, by the universal property of the tensor product over $K$, $\free{f}$ and $\free{f}_1$ agree on
$\efid{A}=A_\ee\ot_K A\ot \sha^+(A_\ct).$
This proves the uniqueness of $\free{f}$ and thus completes the proof of Theorem~\mref{thm:intdiffa}.

\subsection{Examples of regular differential algebras}
\mlabel{ss:example}
In this section we show that some common examples of differential algebras, namely the algebra of differential polynomials and the algebra of rational functions, are regular where the weight can be taken arbitrary.

\subsubsection{Rings of differential polynomials}
\mlabel{sss:diffpoly}

Our main goal in this subsection is to prove
that $(\bfk\diffa{u},\D)$ is a regular differential algebra for any weight, and to give an
explicit quasi-antiderivative~$\q$ for~$\D$.

We start by introducing some definitions for classifying the elements
of~$A=\bfk \diffa{u}$. Let $u_i, i\geq 0,$ be the $i$-th derivation of
$u$. Then $\bfk\diffa{u}$ is the polynomial algebra on $\{u_i\ |\ i\geq
0\}.$ For $\alpha=(\alpha_0,\ldots,\alpha_k)\in \NN^{k+1}$, we write
$u^{\alpha}=u_0^{\alpha_0}\cdots u_k^{\alpha_k}$. Furthermore, we use the
convention that~$u^{\alpha} = 1$ when~$\alpha \in \NN^0$ is the
degenerate tuple of length zero. Then all monomials of $\bfk\diffa{u}$
are of the form $u^{\alpha}$, where~$\alpha$ contains no trailing
zero. The \emph{order} of such a monomial~$u^{(\alpha_0, \dots, \alpha_k)} \ne
1$ is defined to be $k$; the order of~$u^{()} = 1$ is set to~$-1$. The
order of a nonzero differential polynomial is defined as the maximum
of the orders of its monomials. The following classification of
monomials is crucial~\cite{GD,Bi}: A monomial~$u^{\alpha}$ of
order~$k$ is called \emph{functional} if either $k \le 0$ or $\alpha_k >
1$. We write
\begin{equation*}
  A_T=\bfk\{ u^{\alpha} \mid  u^\alpha \text{ is functional}\}
\end{equation*}
for the corresponding submodule. Since the product of two functional monomials is again functional, $A_T$ is in fact a $\bfk$-subalgebra of $A$.
Furthermore, we
write~$A_J$ for the submodule generated by all monomials
$u^{\alpha} \ne 1$.

\begin{prop}
  \label{pp:diffpoly}
  For any $\lambda\in \bfk$, the canonical derivation~$\D\colon A \to A$ of weight $\lambda$ defined in Theorem~\mref{thm:freediff} admits a
  quasi-antiderivative~$\q$ with associated direct sums~$A =
  A_T \oplus \im{\D}$ and~$A = A_J \oplus \ker{\D}$.
\end{prop}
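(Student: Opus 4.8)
The plan is to reduce the statement to the two direct-sum decompositions and then invoke Proposition~\mref{pp:reg}. Indeed, once we exhibit a projector $\E\colon A\to A$ onto $\ker\D$ with $\ker\E=A_J$ and a projector $\s\colon A\to A$ onto $\im\D$ with $\ker\s=A_T$, part~(b) of that proposition produces a unique quasi-inverse $\q$ of $\D$ with $\im\q=\ker\E=A_J$ and $\ker\q=\ker\s=A_T$; this $\q$ is then the desired quasi-antiderivative and the two direct sums hold by construction. The decomposition $A=A_J\oplus\ker\D$ is the easy one: since the canonical derivation raises the number of derivatives of every monomial, one checks that $\ker\D=\bfk$, so $\E$ is the projection onto the constants $\bfk\cdot 1$ with kernel the augmentation ideal $A_J$. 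Everything therefore rests on the second decomposition $A=A_T\oplus\im\D$.

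To organize the computation I would use two gradings on $A$: the total degree $\deg(u^\alpha)=\sum_i\alpha_i$, which $\D$ preserves, and the quantity $\omega(u^\alpha)=\sum_i i\,\alpha_i$, which $\D$ raises. Writing $\D_0$ for the weight-zero derivation ($\D_0(u_i)=u_{i+1}$, extended by the ordinary Leibniz rule), the key structural observation is that $\D=\D_0+\D_{\ge 2}$, where $\D_0$ is homogeneous of $\omega$-degree $+1$ while the weight remainder $\D_{\ge 2}$ raises $\omega$ by at least $2$ (it differentiates two or more factors at once). Both $A_T$ (spanned by the functional monomials) and $A_J$ are homogeneous for both gradings, and within a fixed degree $d$ and order $k$ the value of $\omega$ is bounded above by $kd$, which will make the inductions below terminate.

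For the spanning statement $A=A_T+\im\D$ I would show that every monomial is congruent to an element of $A_T$ modulo $\im\D$, working in a fixed degree $d$, inducting on the order $k$, and within fixed order on $\omega$ downward. For a nonfunctional monomial $m=u_0^{\alpha_0}\cdots u_{k-1}^{\alpha_{k-1}}u_k$ the natural antiderivative $p=\tfrac{1}{\alpha_{k-1}+1}u_0^{\alpha_0}\cdots u_{k-1}^{\alpha_{k-1}+1}$ satisfies $\D(p)=m+r_f+r_{nf}+r_{<k}$, where $r_f\in A_T$ is functional, $r_{<k}$ has lower order, and the weight-induced terms $r_{nf}$ are again nonfunctional of order $k$ but of strictly larger $\omega$. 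Hence $m\equiv-(r_f+r_{nf}+r_{<k})\pmod{\im\D}$ and the induction closes: $r_f$ lies in $A_T$, $r_{<k}$ is handled by the order induction, and $r_{nf}$ by the downward $\omega$-induction (at the maximal value of $\omega$ the term $r_{nf}$ is forced to vanish).

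The main obstacle is the independence $A_T\cap\im\D=0$, and here the two gradings pay off. Given $0\neq v=\D(Q)\in A_T$, I would pass to a fixed degree $d\ge 1$ and take the lowest-$\omega$ initial term $\operatorname{in}_\omega(\cdot)$: because $\D_{\ge 2}$ raises $\omega$ strictly more than $\D_0$, one gets $\operatorname{in}_\omega(\D Q)=\D_0(\operatorname{in}_\omega Q)$, and this is nonzero since $\D_0$ is injective in degrees $\ge 1$. As $A_T$ is $\omega$-homogeneous, $\operatorname{in}_\omega(v)$ then lies in $A_T\cap\im\D_0$, so it suffices to know the weight-zero independence $A_T\cap\im\D_0=0$. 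That case is clean: in each (now finite-dimensional) bidegree, the $\omega$-leading part of $\D_0$ gives a bijection from the monomials of order $k$ onto the nonfunctional monomials of order $k+1$, namely $u^\alpha\mapsto\alpha_k\,u_0^{\alpha_0}\cdots u_k^{\alpha_k-1}u_{k+1}$, whence a dimension count forces the sum to be direct; this, together with the injectivity of $\D_0$, is the only place where the invertibility of the integers $\alpha_k$—i.e.\ the hypothesis that $\bfk$ is a $\QQ$-algebra—is used. Combining the two decompositions, Proposition~\mref{pp:reg}(b) yields the quasi-antiderivative $\q$ with $\ker\q=A_T$ and $\im\q=A_J$; concretely $\q$ vanishes on $A_T$ and integrates each nonfunctional monomial by the leading antiderivative $p$ above, corrected by its lower terms. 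Finally, for $\lambda\neq 0$ one must verify the extra condition in Definition~\mref{def:reg-diffalg}: but $\ker\q=A_T$ is closed under multiplication, since a product of functional monomials is functional, hence a nonunitary $\bfk$-subalgebra, and $(A,\D)$ is regular of weight $\lambda$ as claimed.
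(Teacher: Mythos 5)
Your proposal is correct, and while it shares the paper's overall architecture (establish the two direct sums $A=A_T\oplus\im\D$ and $A=A_J\oplus\ker\D$, then invoke Proposition~\ref{pp:reg}(b) and check that $A_T$ is closed under multiplication when $\lambda\neq 0$), the combinatorial core is handled by a genuinely different device. The paper proves $A_T\cap\im\D=0$ for $\lambda\neq 0$ by introducing a reverse lexicographic order on monomials of order $k+1$ and locating the least order-$(k+1)$ monomial of $\D(x)$ via Eq.~\eqref{eq:dorder}, and it proves spanning by a minimal-counterexample argument in the lexicographic order on exponent vectors. You instead exploit the grading $\omega(u^\alpha)=\sum_i i\alpha_i$: since the single-differentiation part $\D_0$ raises $\omega$ by exactly $1$ while the weight corrections raise it by at least $2$, taking $\omega$-initial forms reduces the independence statement to the weight-zero case (where the order-raising leading term $u^\alpha\mapsto\alpha_k\,u^{\alpha}u_k^{-1}u_{k+1}$ does the work), and the spanning induction runs downward on $\omega$ within a fixed total degree and order, where $\omega\le kd$ guarantees termination. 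Your bookkeeping checks out: in $\D(p)=m+r_f+r_{nf}+r_{<k}$ the weight terms $r_{nf}$ do have $\omega$ exceeding $\omega(m)$ because they differentiate at least two factors, so the induction closes. What your approach buys is a cleaner conceptual reduction of the hardest step (independence for $\lambda\neq 0$) to the classical $\lambda=0$ statement, at the cost of a double induction for spanning; the paper's lexicographic arguments are more hands-on but treat all weights uniformly without passing through initial forms. Both arguments use the standing hypothesis that $\bfk$ is a $\QQ$-algebra (to invert $\alpha_{k-1}+1$ and the coefficients $\alpha_k$), which you correctly flag; note only that your phrase ``$\omega$-leading part of $\D_0$'' should read ``order-raising part of $\D_0$'', since $\D_0$ is already $\omega$-homogeneous.
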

\begin{proof}
  The main work goes into showing the direct sum~$A = A_T \oplus \im{\D}$. We
  first show $A_T \cap \im \D=0$. Let $x\in A$.  If~$x$ has order~$-1$, it is an
  element of~$\bfk$ so that~$d(x) = 0$. If~$x$ has order~$k \ge 0$, we
  distinguish the two cases of $\lambda=0$ and $\lambda\neq 0$. If $\lambda=0$,
  then we have~$\D(x) = (\partial x/\partial u_k) \, u_{k+1} + \tilde{x}$, where
  all terms of~$\tilde{x}$ have order at most~$k$.  Hence $\D(x)\not\in A_T$ and
  therefore we have $A_T\cap \im \D=0$.

  We now turn to the case when $\lambda\neq 0$. By Eq.~(\mref{eq:diff}) and an inductive argument, we find that for a product $w=\prod_{i\in I} w_i$ in $A$, we have
   $$ \D(w)= \sum_{\varnothing\neq J\subseteq I} \lambda^{|J|-1}\prod_{i\in J}\D(w_i)\prod_{i\not\in J} w_i.$$
Then for a given monomial $u^\alpha=u^{(\alpha_0,\ldots,\alpha_k)}=\prod_{i=0}^k u_i^{\alpha_i}$ of order $k$ we have
\begin{eqnarray}
\D(u^\alpha)&=& \sum_{0\leq\beta_i\leq \alpha_i, \sum_{i=0}^k \beta_i\geq 1} \lambda^{\beta_0
+ \cdots + \beta_k - 1}
\prod_{i=0}^k \binc{\alpha_i}{\beta_i} u_i^{\alpha_i-\beta_i} u_{i+1}^{\beta_i}\notag\\
&=& \sum_{0\leq\beta_i\leq \alpha_i, \sum_{i=0}^k \beta_i\geq 1} \lambda^{\beta_0
+ \cdots + \beta_k - 1} \left(\prod_{i=0}^k \binc{\alpha_i}{\beta_i} u_i^{\alpha_i-\beta_i+\beta_{i-1}} \right) u_{k+1}^{\beta_k},
\mlabel{eq:dorder}
\end{eqnarray}
with the convention $\beta_{-1}=0$.
Consider the reverse lexicographic order on monomials of order $k+1$:
$$ (\beta_0,\dots,\beta_{k+1})<(\gamma_0,\dots,\gamma_{k+1}) \Leftrightarrow \exists\, 0\leq n\leq k+1 \;(\beta_i=\gamma_i \text{ for } n< i\leq k+1\ \text{ and }\ \beta_n<\gamma_n).$$
The smallest monomial {\em of order $k+1$} under this order in the sum in Eq.~(\mref{eq:dorder}) is given by  $u_0^{\alpha_0}\cdots
u_{k-1}^{\alpha_{k-1}}u_k^{\alpha_k-1}u_{k+1}$ when $\beta_k=1$ and
$\beta_0=\cdots = \beta_{k-1}=0$, coming from $u_0^{\alpha_0}\cdots
u_{k-1}^{\alpha_{k-1}}\D(u_k^{\alpha_k})$. Thus for two monomials of order~$k$
with $u^\alpha<u^\beta$ under this order, the least monomial of order $k+1$ in
$\D(u^\alpha)$ is smaller than the least monomial of order $k+1$ in
$\D(u^\beta)$. In particular, for the least monomial $u^{\alpha}$ of order $k$
of our given element $x$ of order $k\geq 0$, the least monomial of order $k+1$
in $\D(u^\alpha)$ is the least monomial of order $k+1$ in $\D(x)$ and is given
by $u_0^{\alpha_0}\cdots u_{k-1}^{\alpha_{k-1}}u_k^{\alpha_k-1}u_{k+1}$. Since
this monomial is not in $A_\ct$, it follows that $\D(x)$ is not in $A_\ct$,
showing that $A_T\cap \im \D=0$.

Note that the previous argument shows in particular that $\D(x)\neq 0$ for $x\not\in \bfk$. Thus we have
$$ A = A_J \oplus \bfk.$$

We next show that every monomial $u^{\alpha}$ in $\bfk\diffa{u}$ is in $A_T+\im
\D$. We prove this by induction on the order of~$u^{\alpha}$. If the order
is~$-1$ or~$0$, then~$u^{\alpha} \in A_T$ by definition. Assuming the claim
holds for differential monomials of order less than~$k > 0$, consider now a
monomial~$u^{\alpha}$ of order~$k$ so
that~$\alpha=(\alpha_0,\dots,\alpha_k)$. If $u^{\alpha} \in A_T$, we are
done. If not, we must have $\alpha_k=1$. Then we distinguish the cases when
$\lambda=0$ and $\lambda\neq 0$.  If $\lambda=0$, then
\begin{equation*}
  \begin{aligned}
    u^{\alpha}&=u_0^{\alpha_0}\cdots u_{k-1}^{\alpha_{k-1}} u_k\\
    &= u_0^{\alpha_0}\cdots u_{k-2}^{\alpha_{k-2}} \, \tfrac{1}{\alpha_{k-1}+1} \, \D(u_{k-1}^{\alpha_{k-1}+1})\\
    &= \D(u_0^{\alpha_0}\cdots u_{k-2}^{\alpha_{k-2}} \,
    \tfrac{1}{\alpha_{k-1}+1} \, u_{k-1}^{\alpha_{k-1}+1})-
    \D(u_0^{\alpha_0}\cdots u_{k-2}^{\alpha_{k-2}}) \tfrac{1}{\alpha_{k-1}+1} \,
    u_{k-1}^{\alpha_{k-1}+1}.
  \end{aligned}
\end{equation*}
Now the first term in the result is in $\im \D$ and the second term is in
$A_T+\im \D$ by the induction hypothesis, allowing us to complete the induction
when $\lambda=0$.

Now consider the case when $\lambda\neq 0$. Suppose the claim does not hold for
some monomials $u^\alpha=u^{(\alpha_0,\cdots,\alpha_{k-1},1)}$ of order $k$. Among these monomials, there is one such
that the exponent vector $\alpha=(\alpha_0,\dots,\alpha_{k-1},1)$ is minimal with
respect to the lexicographic order:
$$ (\alpha_0,\dots,\alpha_{k-1},1)<(\beta_0,\dots,\beta_{k-1},1) \Leftrightarrow \exists\, 0\leq n\leq k-1 \; (\alpha_i=\beta_i \text{ for } 1\leq i< n\ \text{ and }\  \alpha_n<\beta_n).$$
By Eq.~\eqref{eq:dorder}, we have
{\allowdisplaybreaks
\begin{eqnarray*}
d(u_{k-1}^{\alpha_{k-1}+1}) &=&\sum_{\beta_{k-1}=1}^{\alpha_{k-1}+1} \binc{\alpha_{k-1}+1}{\beta_{k-1}} \lambda^{\beta_{k-1}-1} u_{k-1}^{\alpha_{k-1}+1-\beta_{k-1}}u_k^{\beta_{k-1}} \\ &=&(\alpha_{k-1}+1) u_{k-1}^{\alpha_{k-1}}u_k  +\sum_{\beta_{k-1}=2}^{\alpha_{k-1}+1} \binc{\alpha_{k-1}+1}{\beta_{k-1}} \lambda^{\beta_{k-1}-1} u_{k-1}^{\alpha_{k-1}+1-\beta_{k-1}}u_k^{\beta_{k-1}}.
\end{eqnarray*}
}
So
$$u_{k-1}^{\alpha_{k-1}}u_k=
\tfrac{1}{\alpha_{k-1}+1} d(u_{k-1}^{\alpha_{k-1}+1})
-\sum_{\beta_{k-1}=2}^{\alpha_{k-1}+1} \tfrac{\lambda^{\beta_{k-1}-1}}{\alpha_{k-1}+1} \binc{\alpha_{k-1}+1}{\beta_{k-1}} u_{k-1}^{\alpha_{k-1}+1-\beta_{k-1}}u_k^{\beta_{k-1}}.$$
Thus
\begin{eqnarray*}
u^\alpha&=& u_0^{\alpha_0}\cdots u_{k-1}^{\alpha_{k-1}}u_k \\
&=& u_0^{\alpha_0}\cdots u_{k-2}^{\alpha_{k-2}}
\tfrac{1}{\alpha_{k-1}+1} d(u_{k-1}^{\alpha_{k-1}+1})
\\
&&-\sum_{\beta_{k-1}=2}^{\alpha_{k-1}+1} \tfrac{\lambda^{\beta_{k-1}-1}}{\alpha_{k-1}+1} \binc{\alpha_{k-1}+1}{\beta_{k-1}} u_0^{\alpha_0}\cdots u_{k-2}^{\alpha_{k-2}} u_{k-1}^{\alpha_{k-1}+1-\beta_{k-1}}u_k^{\beta_{k-1}}.
\end{eqnarray*}
The monomials in the sum are in $A_\ct$. For the first term, by Eq.~\eqref{eq:diff}, we have
  \begin{equation*}
    \begin{aligned}
      & u_0^{\alpha_0}\cdots u_{k-2}^{\alpha_{k-2}} \, \tfrac{1}{\alpha_{k-1}+1} \, \D(u_{k-1}^{\alpha_{k-1}+1})\\
      &= \D(u_0^{\alpha_0}\cdots u_{k-2}^{\alpha_{k-2}} \, \tfrac{1}{\alpha_{k-1}+1} \,
      u_{k-1}^{\alpha_{k-1}+1})- \D(u_0^{\alpha_0}\cdots u_{k-2}^{\alpha_{k-2}})
      \tfrac{1}{\alpha_{k-1}+1} \, u_{k-1}^{\alpha_{k-1}+1}\\
      &\quad - \lambda \,\D (u_0^{\alpha_0}\cdots u_{k-2}^{\alpha_{k-2}}) \D(\tfrac{1}{\alpha_{k-1}+1} \,u_{k-1}^{\alpha_{k-1}+1}).
    \end{aligned}
  \end{equation*}
As in the case of $\lambda=0$, the first term in the result is in $\im \D$ and the second term has the desired decomposition by the induction hypothesis. Applying Eq.~(\mref{eq:dorder}) to both derivations in the third term, we see that the term is a linear combination of monomials of the form $u^\gamma=u^{(\gamma_0,\cdots,\gamma_k)}$ where
$$\gamma= (\alpha_0-\beta_0,\alpha_1-\beta_1+\beta_0,\dots, \alpha_{k-2}-\beta_{k-2}+\beta_{k-3}, \alpha_{k-1}+1-\beta_{k-1}+\beta_{k-2},\beta_{k-1})$$
for some $0\leq \beta_i\leq \alpha_i, 0\leq i\leq k-2$ with $\sum\limits_{i=0}^{k-2}\beta_i\geq 1$ and $\beta_{k-1}\geq 1$. If such a monomials has $\beta_{k-1}\geq 2$, then the monomial is already in $A_\ct$. If such a monomial has $\beta_{k-1}=1$, then it has order $k$ and has lexicographic order less than $u^\alpha$ since $\sum\limits_{i=0}^{k-2}\beta_i\geq 1$. By the minimality of $u^\alpha$, this monomial is in $A_T+\im \D $. Hence $u^\alpha$ is in $A_T+\im \D$. This is a contradiction, allowing us to completes the induction when $\lambda\neq 0$.

With the two direct sum decompositions, the quasi-antiderivative $Q$ is obtained by Proposition~\mref{pp:reg}.
\end{proof}

We can thus conclude that~$\bfk\diffa{u}$ is indeed a regular
differential algebra, as claimed earlier. Hence the
construction~$\efid{u} = \efid{\bfk\diffa{u}}$ developed in
Section~\ref{ss:freec} does yield the free integro-differential
algebra over the single generator~$u$.

\begin{prop}
  Let $\bfk$ be a commutative $\QQ$-algebra. Then the free integro-differential algebra~$\fid{\bfk\diffa{u}}$ is a
  polynomial algebra.
\end{prop}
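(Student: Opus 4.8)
The plan is to read off the answer from the explicit tensor-product model furnished by Theorem~\mref{thm:intdiffa}. Setting $A=\bfk\diffa{u}$, Proposition~\mref{pp:diffpoly} shows that $A$ is a regular differential algebra of weight~$\lambda$ (here the hypothesis $\QQ\subseteq\bfk$ is what lets us divide by the integers $\alpha_{k-1}+1$ occurring in the quasi-antiderivative), with the explicit decompositions $A=A_T\oplus\im\D$ and $A=A_J\oplus\ker\D$. In particular $\ker\D=\bfk$, so the ring of constants is $K=\bfk$, and $A_\ct=A_T$. Theorem~\mref{thm:intdiffa} then identifies
\[
\fid{\bfk\diffa{u}}=\efid{A}=A_\ee\ot_K A\ot \sha^+(A_\ct),
\]
with all tensors taken over $\bfk$. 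So it suffices to prove that each of the three factors is a polynomial $\bfk$-algebra and then invoke the elementary fact that a tensor product over $\bfk$ of polynomial algebras, $\bfk[S]\ot_\bfk\bfk[T]\cong\bfk[S\sqcup T]$, is again polynomial.

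First I would dispatch the two easy factors. The middle factor $A=\bfk\diffa{u}=\bfk[u_0,u_1,\dots]$ is a polynomial algebra by definition (Theorem~\mref{thm:freediff}), and the outer factor $A_\ee$ is, by its very construction, a replica of $A$ as a $K=\bfk$-algebra, hence also a polynomial algebra on the symbols $\ee(u_i)$. The whole difficulty is therefore concentrated in the remaining factor $\sha^+(A_\ct)=\bigoplus_{n\ge0}A_\ct^{\ot n}$, the mixable shuffle algebra on $A_\ct=A_T$. Here I would record that $A_T$ is a free $\bfk$-module, since the functional monomials form a basis, and that for $\lambda\neq0$ it is a commutative $\bfk$-subalgebra of $A$, so that the mixable shuffle product is well defined.

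The key step, and the main obstacle, is to show that $\sha^+(A_\ct)$ is a polynomial $\bfk$-algebra; this is exactly where $\QQ\subseteq\bfk$ is indispensable. For weight $\lambda=0$ the product is the ordinary shuffle product, and Radford's theorem asserts that over a $\QQ$-algebra the shuffle algebra on a free module is the free commutative algebra on the Lyndon words built from a basis of $A_\ct$; hence it is polynomial. For weight $\lambda\neq0$ the mixable shuffle is, after the standard rescaling to weight one, Hoffman's quasi-shuffle product, and Hoffman's exponential isomorphism yields a $\bfk$-algebra isomorphism between the quasi-shuffle algebra and the shuffle algebra whenever $\QQ\subseteq\bfk$; composing this with the weight-zero case again produces a polynomial algebra. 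I expect the only genuine work to lie in quoting these two structure results in the precise form needed (free module, resp.\ commutative algebra, of letters, together with base change from $\QQ$ to $\bfk$), everything else being formal.

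Assembling the pieces, all three tensor factors are polynomial $\bfk$-algebras, so their tensor product $\efid{A}$ is a polynomial algebra, and by Theorem~\mref{thm:intdiffa} so is $\fid{\bfk\diffa{u}}$.
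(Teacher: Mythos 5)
Your proposal is correct and follows the same skeleton as the paper's proof: identify $\fid{\bfk\diffa{u}}$ with the explicit model $\efid{A}=A_\ee\ot_K A\ot\sha^+(A_\ct)$ from Theorem~\mref{thm:intdiffa}, note that $K=\ker\D=\bfk$ and $A_\ct=A_T$, observe that the first two tensor factors are visibly polynomial, and reduce everything to showing that the mixable shuffle algebra $\sha^+(A_T)$ is polynomial via a Lyndon-word structure theorem. The one substantive divergence is in how that last step is discharged. The paper exploits that $A_T=\QQ F$ is the monoid algebra on the monoid $F$ of functional monomials and quotes Theorem~2.3 of~\cite{GX1}, which gives $\sha^+(A_T)\cong\QQ[\operatorname{Lyn}(F)]$ uniformly in the weight $\lambda$; it then base-changes the whole of $\efid{\QQ\diffa{u}}$ along $\QQ\to\bfk$. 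You instead invoke Radford's theorem for $\lambda=0$ and Hoffman's exponential isomorphism (after rescaling to weight one) for $\lambda\neq0$, working over $\bfk$ throughout. Both routes are legitimate and rest on essentially the same combinatorics; the paper's citation has the advantage of treating all weights at once and of matching exactly the hypothesis available here (a monoid algebra of letters), whereas your route requires a case split on $\lambda$ and a check that Hoffman's hypotheses (the grading/local-finiteness assumptions on the letter algebra, and the behaviour under base change from $\QQ$ to a general $\QQ$-algebra) are met for $A_T$ --- which you correctly flag as the remaining work. Your observation that $\QQ\subseteq\bfk$ enters already in Proposition~\mref{pp:diffpoly} through the divisions by $\alpha_{k-1}+1$ is accurate and consistent with the paper's remarks.
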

\begin{proof}
  We first take the coefficient ring to be $\QQ$.  Since~$\fid{\QQ\diffa{u}}$ is
  isomorphic to~$\efid{\QQ\diffa{u}}$, which is given by Eq.~\eqref{eq:efid}
  with~$A = \QQ\diffa{u}$, it suffices to ensure that~$\sha^+(A_T)$ is a
  polynomial algebra. Now observe that~$A_T = \QQ F$ is the monoid algebra
  generated over the set~$F$ of functional monomials. One checks immediately
  that the functional monomials~$F$ form a monoid under multiplication. Hence
  Theorem~2.3 of~\cite{GX1} is applicable, and we see that the mixable shuffle
  algebra~$\sha^+(A_T) = \operatorname{MS}_{\QQ,\lambda}(F)$ is isomorphic
  to~$\QQ[\operatorname{Lyn}(F)]$, where~$\operatorname{Lyn}(F)$ denotes the set
  of Lyndon words over~$F$. This proves the proposition when $\bfk = \QQ$.  Then
  the conclusion follows for any commutative $\QQ$-algebra $\bfk$ since
  $\efid{\bfk\diffa{u}}\cong\bfk \ot_\QQ \efid{\QQ\diffa{u}}.$
\end{proof}

\subsubsection{Rational functions}
\mlabel{sss:ratfunc}
We show that the algebra of rational functions with derivation of any weight is regular.

\begin{prop}
Let $A=\CC(x)$. For any $\lambda\in \CC$ let
\begin{equation}
\D_\lambda \colon A\to A, f(x)\mapsto \left\{\begin{array}{ll}
\frac{f(x+\lambda)-f(x)}{\lambda}, &\lambda\neq 0, \\
f'(x), & \lambda =0,
\end{array} \right .
\mlabel{eq:ratder}
\end{equation}
be the $\lambda$-derivation introduced in
Example~\mref{ex:diff}$($\mref{it:divdiff}$)$. Then~$\D_\lambda$ is regular. In
particular the difference operator on $\CC(x)$ is a regular derivation of weight
one.  \mlabel{pp:rational}
\end{prop}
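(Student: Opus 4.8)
The plan is to treat $\lambda=0$ and $\lambda\neq 0$ separately, the former being immediate and the latter carrying all the content. Since $\CC(x)$ is a vector space over $\CC$ and $\D_\lambda$ is $\CC$-linear, every subspace is complemented; hence $\D_\lambda$ is automatically a regular map, and for $\lambda=0$ there is nothing further to prove (an explicit quasi-antiderivative was already exhibited in Example~\ref{ex:ratfun}). For $\lambda\neq 0$ the extra requirement of Definition~\ref{def:reg-diffalg} is that some quasi-inverse of $\D_\lambda$ have kernel a nonunitary $\CC$-subalgebra, and by Proposition~\ref{pp:reg} this reduces to producing a complement $A_\ct$ of $\im\D_\lambda$ in $\CC(x)$ that is closed under multiplication. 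First I would rewrite $\D_\lambda=(E-1)/\lambda$ with the shift $E\colon f(x)\mapsto f(x+\lambda)$, so that $\ker\D_\lambda=\ker(E-1)$ and $\im\D_\lambda=\im(E-1)$, and observe that $\ker\D_\lambda=\CC$: a nonconstant rational function cannot be $\lambda$-periodic, since a pole at $\alpha$ would force poles at all $\alpha+n\lambda$, while a pole-free periodic function is a constant polynomial.

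The heart of the argument is a partial-fraction analysis of $\im(E-1)$. Writing each rational function as its polynomial part plus a proper part $\sum_{\beta,j}c_{\beta,j}(x-\beta)^{-j}$, I would note that $E-1$ preserves both summands, preserves the order $j$ of a pole, and sends a pole at $\beta$ to one at $\beta-\lambda$; hence it respects the decomposition of the proper part into the subspaces $V_j^{O}=\operatorname{span}\{(x-\beta)^{-j}\mid \beta\in O\}$ indexed by pole order $j\geq 1$ and by the $\lambda\ZZ$-orbits $O\subseteq\CC$. On polynomials $E-1$ is surjective (it lowers degree by one and $1=(E-1)(x/\lambda)$), so the entire polynomial part lies in the image. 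On a fixed $V_j^{O}$, parametrizing $e_n=(x-\beta_0-n\lambda)^{-j}$ gives $(E-1)e_n=e_{n-1}-e_n$, whence a telescoping computation shows $\im(E-1)\cap V_j^{O}$ is exactly the codimension-one ``residue-sum-zero'' hyperplane $\{\sum d_n e_n\mid \sum_n d_n=0\}$. Choosing one representative $\beta_O$ per orbit, the line $\CC\,(x-\beta_O)^{-j}$ is a complement in $V_j^{O}$, and therefore $A_\ct:=\bigoplus_{O,\,j\geq 1}\CC\,(x-\beta_O)^{-j}$ is a linear complement of $\im\D_\lambda$.

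It remains to check that this $A_\ct$ is a nonunitary subalgebra, which is the crux of the proof and the place where the choice of orbit representatives matters. Since $A_\ct$ consists of proper rational functions, $1\notin A_\ct$, so it is nonunitary. For closure under multiplication I would expand a product $(x-\beta_{O_1})^{-i}(x-\beta_{O_2})^{-j}$ by partial fractions: if $O_1=O_2$ the two representatives coincide and the product is $(x-\beta_{O_1})^{-(i+j)}\in A_\ct$; if $O_1\neq O_2$ then $\beta_{O_1}\neq\beta_{O_2}$ and the expansion involves only poles at the two representatives $\beta_{O_1},\beta_{O_2}$, again landing in $A_\ct$. (Alternatively one can verify condition (c) of Lemma~\ref{lem:projsubalg} for the projector $\s=\D_\lambda\circ\q$.) With $A_\ct$ a subalgebra complementing $\im\D_\lambda$ and any chosen complement of $\ker\D_\lambda=\CC$, Proposition~\ref{pp:reg} furnishes the unique quasi-inverse $\q$ with $\ker\q=A_\ct$; this $\q$ is the desired quasi-antiderivative, so $\D_\lambda$ is regular for every $\lambda$, and in particular the difference operator ($\lambda=1$) is a regular derivation of weight one. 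The main obstacle is precisely arranging a single complement that is simultaneously transverse to the image and multiplicatively closed; the orbit/representative bookkeeping in the partial-fraction computation is what makes both hold at once.
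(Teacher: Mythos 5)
Your proof is correct, and while it shares the overall skeleton of the paper's argument (partial fractions, showing the polynomial part lies in $\im \D_\lambda$, and building the complement from one pole location per $\lambda\ZZ$-orbit), the two crucial steps are handled genuinely differently. The paper proves $\D_\lambda(\CC[x])=\CC[x]$ via the $\lambda$-divided falling factorials, whereas you use the simpler observation that $E-1$ lowers degree by exactly one; and, more substantially, the paper fixes the canonical fundamental domain $\re(\alpha)\in[0,|\re(\lambda)|)$ to define $\Cxto$ and then proves $\im\D_\lambda\cap\Cxto=0$ by an extremal-term argument (tracking the least and greatest poles under a lexicographic order), while you identify $\im(E-1)\cap V_j^{O}$ outright as the coefficient-sum-zero hyperplane in each orbit space $V_j^{O}$, from which both the spanning and the transversality of $A_\ct$ are immediate. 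Your route buys uniformity: it works verbatim for every $\lambda\neq 0$, including purely imaginary $\lambda$, where the paper's strip $[0,|\re(\lambda)|)$ degenerates to the empty set and its argument needs patching; the price is that "one representative per orbit" is a non-canonical choice (the paper's strip, when available, supplies a concrete one, and you should note that the same representative must be used for all pole orders $j$ within an orbit --- which your indexing by $O$ alone does ensure --- since otherwise multiplicative closure in the case $O_1=O_2$ would fail). Your direct verification that $\ker\D_\lambda=\CC$ via periodicity of poles also replaces the paper's derivation of this fact from its intersection argument; both are sound.
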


\begin{proof}
We have considered the case of $\lambda=0$ in Example~\mref{ex:ratfun}. Modifying the notations there, any rational function can be uniquely expressed as
  \begin{equation}
    r + \sum_{i=1}^k \sum_{j=1}^{n_i}
    \frac{\gamma_{ij}}{(x-\alpha_{ij})^i},
    \mlabel{eq:pfdecomp1}
  \end{equation}
where $r\in\CC[x], \alpha_{ij}\in \CC$ are distinct for any given $i$ and $\gamma_{ij}\in \CC$ are nonzero. Let $0\neq \lambda\in \CC$ be given. We have the direct sum of linear spaces
$$
\CC[x] \oplus \calr= \CC[x] \oplus \bigoplus_{i\geq 1} \calr_i,
$$
where $\calr$ is the linear space from the fractions in Eq.~(\mref{eq:pfdecomp1}), namely the linear space with basis $1/(x-\alpha)^i, \alpha\in \CC, 1 \leq i$, and $\calr_i$, for fixed $i\geq 1$, is the linear subspace with basis $1/(x-\alpha)^i, \alpha\in \CC$.

We note that the \emph{$\lambda$-divided falling factorials}

$$ \binom{x}{n}_{\!\lambda}:=\frac{x(x-\lambda)(x-2\lambda)\cdots (x-(n+1)\lambda)}{n!},\quad n\geq 0,$$
with the convention $\binom{x}{0}_{\!\lambda}=1$, form a $\CC$-basis of $\CC[x]$.
In fact,
$$ \binom{x}{n}_{\!\lambda} \! = \frac{1}{n!} \sum_{k=0}^n s(n,k) \lambda^{n-k}x^k, \quad
x^n = n! \sum_{k=0}^n S(n,k)\lambda^{n-k} \binom{x}{n}_{\!\lambda}, \quad  n\geq 0,
$$
where $s(n,k)$ and $S(n,k)$ are Stirling numbers of the first and second kind,
respectively; see~\mcite{Gub,Gst} for example. By a direct computation, we have
$$\D_\lambda \left(\binom{x}{n}_{\!\lambda} \, \right)=\frac{\binom{x+\lambda}{n}_{\!\lambda} - \binom{x}{n}_{\!\lambda} }{\lambda} = \binom{x}{n-1}_{\!\lambda}.$$
Thus $\D_\lambda(\CC[x])=\CC[x]$ and hence $\CC[x]\subseteq \im \D_\lambda$.
We next note that $\calr$, as well as $\calr_k$, is also closed under the operator $\D_\lambda$ since
$$ \lambda \, \D_\lambda \left (\sum_{i=1}^k \sum_{j=1}^{n_i}
    \frac{\gamma_{ij}}{(x-\alpha_{ij})^i}\right)
    = \sum_{i=1}^k \sum_{j=1}^{n_i}
    \frac{\gamma_{ij}}{(x-(\alpha_{ij}-\lambda))^i} -
    \sum_{i=1}^k \sum_{j=1}^{n_i}
    \frac{\gamma_{ij}}{(x-\alpha_{ij})^i}.$$
Further, for any $n \geq 0$ and $f(x)\in\CC(x)$, we have
$$
\lambda \, \D_\lambda\left(\sum_{i=0}^nf(x+i\lambda)\right) =f(x+(n+1)\lambda)-f(x),
$$
and similarly for $n<0$,
\[
\lambda \, \D_\lambda\left(\sum_{i=n}^{-1} f(x+i\lambda)\right) =f(x)-f(x+n\lambda),
\]
Thus for any $n\in \ZZ$, we have
$$ f(x) \equiv f(x+n\lambda) \mod \im \D_\lambda.$$
In particular,
$$ 1/(x-\alpha)^i \equiv 1/(x-(\alpha-n\lambda))^i \mod \im \D_\lambda$$
and hence
$$ 1/(x-\alpha)^i \equiv 1/(x-\beta)^i \mod \im \D_\lambda,$$
for some $\beta\in \CC$ with the real part $\re (\beta)\in [0,\abs{\re
  (\lambda)}).$
Consequently, any fraction in $\calr$ is congruent modulo $\im \D_\lambda$ to an element of
\begin{equation} \Cxto := \left \{ \sum_{i=1}^k \sum_{j=1}^{n_i}
    \frac{\gamma_{ij}}{(x-\alpha_{ij})^i}\in \calr \Big| \re(\alpha_{ij})\in [0,\abs{\re (\lambda)}) \right\}.
    \mlabel{eq:resfract}
\end{equation}
That is,
$$\CC(x)= \im\D_\lambda + \Cxto.$$

On the other hand, suppose there is a nonzero function
$$f(x)=\sum\limits_{i=1}^k \sum\limits_{j=1}^{n_i}
    \frac{\gamma_{ij}}{(x-\alpha_{ij})^i} \in \im\D_\lambda\cap \Cxto.$$
Thus there is $g(x)=\sum\limits_{i=1}^k \sum\limits_{j=1}^{m_i}
    \frac{\gamma_{ij}}{(x-\beta_{ij})^i}$ such that $\D_\lambda(g(x))=f(x).$
The range of $i$ in $f(x)$ and $g(x)$ are the same since $\D_\lambda (\calr_i)\subseteq \calr_i$.
Let $f(x)=\sum\limits_{i=1}^k f_i(x)$ and $g(x)=\sum\limits_{i=1}^kg_i(x)$ be the homogeneous decompositions of $f$ and $g$. Then $\D_\lambda (g_i(x))=f_i(x), 1\leq i\leq k$. Fix $1\leq i\leq k$ and take $\re (\lambda)>0$ for now. List
$\beta_{i,1}<\cdots<\beta_{i,m_i}$ according to their lexicographic order from the pairs $(a,b)\leftrightarrow a+i\,b\in \CC$. Then we have
$$ \lambda \, \D_\lambda (g_i(x))= \sum_{j=1}^{m_i} \frac{\gamma_{ij}}{(x-(\beta_{ij}-\lambda))^i}-\sum_{j=1}^{m_i} \frac{\gamma_{ij}}{(x-\beta_{ij})^i}.$$
The first fraction in the first sum, $1/(x-(\beta_{i,1}-\lambda))^i$, is not the
same as any other fraction in the first sum since they are translations by
$\lambda$ of distinct fractions in $f_i$, and is not the same as any fraction in
the second sum since $\re(\beta_{i,1}-\lambda)<\re(\beta_{i,1}) \leq
\re(\beta_{ij})$ for $1\leq j\leq m_i.$ Similarly the last fraction in the
second sum, $1/(x-\beta_{i,m_i})^i$, is not the same as any other terms in the
sums. Thus they both have nonzero coefficients in $\D_\lambda(g_i(x))$. But
$$ \re(\beta_{i,m_i})-\re(\beta_{i,1}-\lambda) =\re(\beta_{i,m_i}-(\beta_{i,1}-\lambda)) =\re(\beta_{i,m_i}-\beta_{i,1})+\re(\lambda)\geq \re(\lambda).$$
Hence $\re(\beta_{i,m_i})$ and $\re(\beta_{i,1}-\lambda)$ cannot both be in $[0,\re (\lambda))$. Thus $\D_\lambda(g_i)$ and hence $\D_\lambda(g)$ cannot be in $\Cxto$. This is a contradiction, showing that $\im \D_\lambda\cap \Cxto=0$. When $\re(\lambda)<0$, we get analogously $\im \D_\lambda\cap \Cxto=0$. Thus we have proved
\begin{equation}
\CC(x)=\im \D_\lambda \oplus \Cxto.
\mlabel{eq:fracdecomp1}
\end{equation}
Note that $\Cxto$ is closed under multiplication, hence is a nonunitary subalgebra of $\CC(x)$.

The above argument shows that $\D_\lambda(g)$ is in $\Cxto$ for $g\in \calr$ only when $g=0$. Thus $\ker \D_\lambda \cap \calr=0$. Since $\D_\lambda$ preserves the decomposition $\CC(x)=\CC[x]\oplus \calr$, we have $\ker \D_\lambda = \ker (\D_\lambda)\big|_{\CC[x]}=\CC.$
Thus we have the direct sum decomposition
\begin{equation}
 \CC(x)=\ker \D_\lambda \oplus (x\CC[x]\oplus \calr),
 \mlabel{eq:fracdecomp2}
 \end{equation}
and hence $\D_\lambda$ is injective on
$x\CC[x]\oplus \calr$ with image $\im \D_\lambda$. Therefore $\D_\lambda$ is regular with quasi-antiderivative $Q$ defined to be the inverse of
$$\D_\lambda: x\CC[x]\oplus \calr \to \im \D_\lambda$$
on $\im \D_\lambda$ and to be zero on its complement $\Cxto$; see
Proposition~\mref{pp:reg}.
\end{proof}

\begin{remark}
{\rm We remark that the subalgebra of $\CC(x)$ that is a complement of $\im
  \D_\lambda$ is not unique, thus giving different quasi-antiderivatives. In fact, from the proof of Proposition~\mref{pp:rational} it is apparent that in the decomposition (\mref{eq:fracdecomp1}) one can replace $\Cxto$ by
$$ \Cxta
=\left \{ \sum_{i=1}^k \sum_{j=1}^{n_i}
    \frac{\gamma_{ij}}{(x-\alpha_{ij})^i}\in \calr\,\Big|\, \re(\alpha_i)\in [a,a+|\re(\lambda)|) \right\},
$$
for any given $a\in \RR$. These two subalgebras are isomorphic since $\Cxta$ is isomorphic to the polynomial $\CC$-algebra with generating set
$$ \left\{ \frac{1}{x-\alpha}\,\Big|\, \alpha\in [a,a+|\re(\lambda)|)\right\}.$$
}
\mlabel{rk:subring}
\end{remark}

\begin{remark}
\label{s:concl}
In conclusion, we have given the first construction for the free
integro-differential algebra~$\efid{A}$ over a given regular differential
algebra~$A$. In several ways, this construction is similar to the
integro-differential polynomials of~\cite{RR1,RRTB1}. This will be clear when
one writes out the elements~$a_0 \ot a_1 \ot a_2 \ot \cdots$ of
Eq.~\eqref{eq:fraka} in the form~$a_0 \cum a_1 \cum a_2 \cum \cdots$. 
But there are
also some important differences:
\begin{enumerate}
\item\label{it:intdiffpol} The integro-differential polynomials are the
  polynomial algebra in the variety of integro-differential algebras of weight zero, not the
  free algebra in this category. In fact, the polynomial algebra is always a
   free product of the coefficient algebra and the free algebra by Theorem~4.31
  of~\cite{LN}.
\item The construction of~\cite{RR1} uses the language of term
  algebras and rewrite systems whereas in this paper we use a more
  abstract approach through tensor products.
\item In the integro-differential polynomials, the starting point is a given
  integro-differential algebra~$(A, \DD, \pp)$ instead of a regular differential
  algebra as in the present paper. In the former case we can construct nested
  integrals over differential polynomials with coefficients in~$\bfk[x]$,
  whereas in the latter case we can only treat differential polynomials with
  trivial coefficients (i.e.\@ the derivation vanishes on them).
\end{enumerate}
It would be interesting to apply the methods used in this paper to rederive and
generalize the construction of the integro-differential polynomials
of~\cite{RR1}. This would also shed some light on the constructive meaning of
the free union mentioned in Item~(\mref{it:intdiffpol}) above. An important step
in this direction might be generalizing Section~\ref{sss:diffpoly} to
differential polynomials with nonzero derivation on the coefficient ring~$\bfk$. See~\cite{GG} for a construction of the free integro-differential algebra on one generator by the method of Gr\"obner-Shirshov basis.
\end{remark}

\medskip

\noindent
\emph{Acknowledgements}:
L.~Guo acknowledges support from NSF grant DMS 1001855. G.~Regensburger was supported by the Austrian Science Fund (FWF): J 3030-N18. M.~Rosenkranz acknowledges support from the EPSRC First Grant EP/I037474/1.

\end{document}